\newtheorem{thm}{Theorem}[section]
\newtheorem{lem}[thm]{Lemma}
\newtheorem{prop}[thm]{Proposition}
\newtheorem*{assumption*}{\assumptionnumber}
\providecommand{\assumptionnumber}{}
\newenvironment{assumption}[1]
{%
	\renewcommand{\assumptionnumber}{A.#1}%
	\begin{assumption*}%
		\protected@edef\@currentlabel{A.#1}%
	}
	{%
	\end{assumption*}
}
\theoremstyle{definition}
\theoremstyle{remark}
\newtheorem{rem}[thm]{Remark}
\numberwithin{equation}{section}
\newcommand{\ep}{\varepsilon}
\newcommand{\rz}{\rho_0}
\newcommand{\re}{\rho_\ast}
\newcommand{\rc}{\widehat{\rho}\, }
\newcommand{\R}{\mathbb{R}}				      
\newcommand{\lnum}{ }
\newcommand{\dps}{\displaystyle}	
\newcommand{\intq}{\int_0^T\int_0^1}	
\newcommand{\intw}{\int_0^T\int_{\omega}}	
\newcommand{\intwl}{\int_0^T\int_{\omega'}}	
\newcommand{\into}{\int_0^1}	
\newcommand{\inta}{\int_0^T\int_0^{\alpha'}}
\newcommand{\intb}{\int_0^T\int_{\beta'}^1}
\newcommand{\dom}{(0,T)\times (0,1)}
\newcommand{\domw}{(0,T)\times\omega}
\newcommand{\n}[1]{\|#1\|}
\newcommand{\ny}[1]{\|#1\|_E}
\newcommand{\bu}{\bar{u}}
\newcommand{\bh}{\bar{h}}
\newcommand{\nf}[1]{\left\|#1\right\|_G}
\newcommand{\gu}{\ell\left(\into u\right)}
\newcommand{\bl}{b_\lambda}
\newcommand{\fl}{f_\lambda}
\newcommand{\wei}{e^{2s\varphi}}
\newcommand{\G}[1]{\int_0^T\int_0^1e^{2sA}\left[s\tau a#1_x^2+(s\tau)^3\frac{x^2}{a}#1^2\right]  }
\newcommand{\Ga}[1]{\int_0^{T/2}\int_0^1e^{2sA}\left[s\tau a#1_x^2+(s\tau)^3\frac{x^2}{a}#1^2\right]  }
\newcommand{\Gb}[1]{\int_{T/2}^{T}\int_0^1e^{2sA}\left[s\tau a#1_x^2+(s\tau)^3\frac{x^2}{a}#1^2\right]  }
\definecolor{mycolor}{RGB}{255,251,204}
\tikzstyle{mybox} = [draw=yellow, very thick,
\tikzstyle{fancytitle} =[fill=white, text=red]
\begin{document}

	\title[local null controllability for degenerate parabolic equations with nonlocal term]{Local null controllability for degenerate parabolic  equations with nonlocal term}

	\author[R. Demarque]{R. Demarque$^\ast$}
	\address[R. Demarque]{\newline Departamento de Ci\^encias da Natureza,
		Universidade Federal Fluminense,
		Rio das Ostras, RJ, 28895-532, Brazil}
	\email{r.demarque@gmail.com}
	\thanks{$^\ast$ Partially supported by FAPERJ  E-26/111.039/2013 and  Proppi/PDI/UFF}
	\author[J. L\'imaco]{J. L\'imaco}
	\address[J. L\'imaco]{\newline  Departamento de Matem\'{a}tica Aplicada,
		Universidade Federal Fluminense,
		Niter\'{o}i, RJ, 24020-140, Brazil}
	\email{jlimaco@vm.uff.br}
	
	\author[L. Viana]{L. Viana}
	\address[L. Viana]{\newline  Departamento de An\'alise,
		Universidade Federal Fluminense,
		Niter\'{o}i, RJ, 24020-140, Brazil}
	\email{luizviana@id.uff.br}
	\subjclass[2010]{ Primary 35K65, 93B05; Secondary 35K55}
	
	\keywords{Degenerate parabolic equations, controllability, nonlinear parabolic equations, nonlocal term}


	\begin{abstract}
		
		We establish a local null controllability result for following the nonlinear parabolic equation:
		$$u_t-\left(b\left(x,\int_0^1u \   \right)u_x \right)_x+f(t,x,u)=h\chi_\omega,\ (t,x)\in \dom$$
		where $b(x,r)=\ell(r)a(x)$ is a function with separated variables that defines an operator which  degenerates at $x=0$ and has a nonlocal term. Our approach relies on an application of Liusternik's inverse mapping theorem that demands the proof of  a suitable Carleman estimate.
	\end{abstract}
	\maketitle

	\section{Introduction\label{intro}}

\noindent

In this paper we study the null controllability for the degenerate parabolic problem
\begin{equation}
\left\{\begin{array}{l}
u_t-\left(b\left(x,\int_0^1u \   \right)u_x \right)_x+f(t,x,u)=h\chi_\omega,\\
u(t,1)=u(t,0)=0,\\
u(0,x)=u_0(x),
\end{array}\right. \label{prob1} \end{equation}
where $T>0$ is given, $(t,x)\in \dom$,  $u_0\in L^2(0,1)$ and $h\in L^2(\dom)$ is a control that acts on the system through $\omega=(\alpha,\beta)\subset\subset (0,1)$. We also specify some properties of  $b$ and $f$:

\begin{assumption}{1} \label{hyp_a}  Let $\ell:\R\to \R$ be a $C^1$ function with bounded derivative and suppose that $\ell(0)=1$. We also consider $a \in C([0,1])\cap C^1((0,1])$ satisfying $a (0)=0$, $a >0$ on $(0,1]$, $a'\geq 0$ and  \begin{equation}\label{prop_a}
	xa'(x)\leq Ka(x),\ \ \forall x\in [0,1] \mbox{ and some } K\in [0,1).
	\end{equation}
	The function $b:[0,1]\times \R\to \R$  is defined by 
	$$b(x,r)=\ell(r)a(x).$$
\end{assumption}

\begin{rem}
	Let $\alpha \in (0,1)$, then a typical example of function satisfying $\eqref{hyp_a}$ is $a(x)=x^\alpha$. If we define $\beta=\arctan(\alpha)$, then an other example is  $a(x)=x^\alpha\cos(\beta x)$.
\end{rem}


\begin{assumption}{2}\label{hyp_f} Let $f:[0,T]\times [0,1]\times \R\to \R$ be a $C^1$ function  with bounded derivatives such that $f(t,x,0)=0$. We suppose that $$c=c(t,x):=D_3f(t,x,0)\in L^\infty(\dom)$$
	
\end{assumption}

The main goal of this work is to prove that there exists $h\in L^2(\dom)$ such that the associated state $u=u(t,x)$ of \eqref{prob1} satisfies
$$u(T,x)\equiv 0 \mbox{ for any } x\in [0,1],$$
at least if $\|u_0\|_{H_a^1}$ is sufficiently small, where $H_a^1$ is a suitable weighted Hilbert space which will be defined later.

The system considered here yields the operator
$$\left(b\left(\cdot,\int_0^1u\right)u_x\right)_x,$$
which  degenerates at $x=0$ and also have a nonlocal term. Semilinear
nondegenerate equations have been studied extensively in the last forty years, see \cite{fattorini1971exact, fernandez2012null, fernandez2000cost, fursikov1996controllability, lebeau1995controle} for example.

However, there is also a large interest in degenerate operators where degeneracy occurs at the boundary of the space domain. For instance, in order to investigate the Prandtl system for stationary flows, Oleinik et al. \cite{oleinik1999mathematical}
used a transformation introduced by Crocco and reduces the boundary layer system to a single quasilinear equations which is of the degenerate parabolic type. As pointed out by Alabau et al. in \cite{alabau2006carleman}, degenerate operators can also come from probabilistic models, see \cite{feller1952parabolic, feller1954diffusion}. They  have obtained null controllability for the problem \eqref{prob1}  when $b$ does not depend on $\int_0^1u$. Other physical  problems involving degenerate operators can be found in climate science, see for example\cite{floridia2014approximate}.

On the other hand, when $b$ does not depend on $x$, we will have only nonlocal term without degeneration. In this case, the second author et al.  have proved in \cite{fernandez2012null} null controllability   for the following n-dimensional problem
$$\begin{cases}
u_t+B(u(\cdot,t),t)\Delta u=v1_\omega \mbox{ in } \Omega\times (0,T),\\
u(x,t)\equiv \mbox{ on } \partial\Omega \times (0,T),\\
u(x,0)=u_0(x) \mbox{ in } \Omega.
\end{cases}$$
This kind of nonlocal terms have important physical motivations. In that work, the authors listed several examples of real world physical models, namely:

\begin{itemize}
	\item In the case of migration of populations, for instance the bacteria in a container, we may have
	$$B(u(\cdot,t),t)=b\left(\int_\Omega u\right),$$
	where $b$ is a positive continuous function.
	
	\item  In the context of reaction-diffusion systems, it is also frequent to find terms of this kind; the particular case
	$$B(u(\cdot,t),t)=b\left(\langle L, u(\cdot,t)\right\rangle),$$
	where $b$ is a real positive continuous function and $L$ is a continuous linear form on $L^2(\Omega)$, has been investigated for instance by Chang and Chipot \cite{chang2003some}.
	
	\item In the context of hyperbolic equation, terms of the kind
	$$B(u(\cdot,t),t)=b\left(\int_\Omega |\nabla u|^2\right),$$
	appear in the Kirchhoff equations, which arises in nonlinear vibration theory; see for instance \cite{medeiros2002vibrations}.
\end{itemize}

The present work extends the results in \cite{alabau2006carleman} for the case in which the operator degenerates at $x=0$ and also have a nonlocal term. Our approach  is based on the works  \cite{fernandez2012null,clark2013theoretical}. Local null controllability for (\ref{prob1}) will be obtained by applying  the Liusternik's Inverse Mapping Theorem, which can be found in \cite{fursikov1996controllability,yamamoto2003carleman}. More precisely, we will define two Hilbert spaces $E$ and $F$, and a $C^1$ mapping $H:E\to F$ which are related to the null controllability of (\ref{prob1}). The appropriate choice of $E, F$ and $H$ is very meticulous and relies on additional estimates for the solutions of the linearized problem
\begin{equation}\lnum \label{intro2}
\left\{\begin{array}{ll}
u_t-\left(a\left(x\right)u_x \right)_x+c(t,x)u=h\chi_\omega + g, & (t,x)\in \dom, \\
u(t,1)=0,\ u(t,0)=0 & t\in (0,T),\\
u(0,x)=u_0(x).
\end{array}\right.   \end{equation}

The crucial ingredient to assure that $H$ satisfies the hypothesis of Liusternik's Theorem is a Carleman type estimate for  the solutions of the adjoint system of (\ref{intro2}), given by
\begin{equation}\lnum \label{intro3}
\left\{\begin{array}{ll}
v_t+\left(a\left(x\right)v_x \right)_x-c(t,x)v=F(t,x), & (t,x)\in \dom, \\
v(t,1)=0,\ v(t,0)=0 & t\in (0,T).\\
\end{array}\right.   \end{equation}

We observe that the Carleman estimate proved in \cite{alabau2006carleman} as well as that one proved in  \cite{AAC} are not appropriate here. In fact, the estimate obtained in \cite{alabau2006carleman} does not have the observation term in the interior of the domain. In \cite{AAC} the authors dealt with this problem, but they only considered the degeneracy term of the type $x^\alpha$. Our Carleman estimate (Proposition \ref{cor_puel}) is a consequence of two others inequalities. Namely, an extension of that one proved in \cite{AAC} (Proposition \ref{cor_hat}), with the degeneracy term $a=a(x)$ described in assumption \ref{hyp_a}, and  the Hardy-Poincar\'e inequality, obtained in \cite{alabau2006carleman}.

Our main 
result is
the following:
\begin{thm} \label{th_1.1}
	Under the asssumptions on $b$ and $f$, the nonlinear system (\ref{prob1}) is locally null-controllable at any time $T>0$, i.e.,  there exists $\ep>0$ such that, whenever $u_0\in H_a^1$ and $\|u_0\|_{H_a^1}\leq \ep$, there exist a control $h\in L^2(\domw)$ associated to a   state $u=u(t,x)$ satisfying 
	\begin{equation}\label{u(T)}
	u(T,x)=0, \mbox{ for every } x\in [0,1]
	\end{equation}
\end{thm}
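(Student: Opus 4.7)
The plan is to apply Liusternik's Inverse Mapping Theorem to a $C^1$ operator $H\colon E\to F$ that encodes the nonlinear equation together with the initial-data constraint. Take $F$ to be a product of a weighted $L^2$ space $G$ (whose weight is the reciprocal of a Carleman weight associated with Proposition \ref{cor_puel}) and $H_a^1$, and take $E$ to be the space of pairs $(u,h)$ with $u$ possessing the parabolic regularity adapted to the degenerate operator $-(a(x)\partial_x)\partial_x$, with $h\chi_\omega$ in a suitable weighted $L^2$-space on $\domw$, with vanishing lateral traces, and with $u(T,\cdot)\equiv 0$. Define
\[
H(u,h) := \Bigl(u_t - \bigl(\gu\, a(x) u_x\bigr)_x + f(t,x,u) - h\chi_\omega,\ u(0,\cdot)\Bigr).
\]
Since $\ell$ and $f$ are $C^1$ with bounded derivatives and $\ell(0)=1$, $f(t,x,0)=0$, Nemytskii-type arguments give $H\in C^1(E;F)$ with $H(0,0)=(0,0)$ and
\[
H'(0,0)(u,h) = \bigl(u_t - (a u_x)_x + c(t,x)u - h\chi_\omega,\ u(0,\cdot)\bigr),
\]
so local solvability of $H(u,h)=(0,u_0)$ near $(0,0)$ is exactly local null controllability of \eqref{prob1}.

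Next I would establish surjectivity of $H'(0,0)$, which amounts to controlled well-posedness of the linear problem \eqref{intro2} in $E$, $F$: for each $(g,u_0)\in F$, produce $(u,h)\in E$ with $u(T)\equiv 0$. Following Fursikov--Imanuvilov, this reduces by duality to an observability estimate for the adjoint \eqref{intro3}, supplied by the Carleman estimate of Proposition \ref{cor_puel}. Concretely, one minimises a strictly convex quadratic functional on the space of adjoint states, coerced by the weighted norms of Proposition \ref{cor_puel}; the unique minimiser $v$ yields the pair $(u,h)$ through the Euler equation, and the Carleman bounds translate into the $E$-estimates needed for surjectivity with a bounded right inverse. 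Together with the $C^1$ regularity of $H$, Liusternik's theorem supplies a neighbourhood $V$ of $(0,0)$ in $F$ such that every $(g,u_0)\in V$ has a preimage in $E$. Taking $g\equiv 0$ and $\|u_0\|_{H_a^1}$ small produces the desired control $h$ and state $u$ satisfying \eqref{u(T)}.

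The main obstacle is the design of the norms on $E$ and $F$. The Carleman weights must be strong enough for Proposition \ref{cor_puel} to yield surjectivity of $H'(0,0)$ with enough regularity on $u$, yet not so strong as to break the smoothness of the maps $u\mapsto\gu\, a(x) u_x$ and $u\mapsto f(t,x,u)$ as bounded Nemytskii-type operators into $G$. In particular, the nonlocal coefficient $\into u$ samples $u$ across the degenerate point $x=0$, and its interaction with the degenerate principal part must be controlled by additional weighted bounds on the linearized problem beyond the raw Carleman inequality; reconciling these with the degenerate Hardy--Poincar\'e-type estimate used in the proof of Proposition \ref{cor_puel} is the delicate part of choosing $E$ and $F$.
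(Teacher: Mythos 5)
Your overall strategy coincides with the paper's: choose weighted spaces $E$ and $F=G\times H_a^1$, define $H(u,h)=\bigl(u_t-(\,b(x,\int_0^1u)u_x)_x+f(t,x,u)-h\chi_\omega,\ u(0,\cdot)\bigr)$, show $H\in C^1$, obtain surjectivity of $H'(0,0)$ from the null controllability of the linearized system (which the paper proves in Proposition \ref{prop2.4} by a penalized minimization of weighted functionals $J_n$, a variant of the duality/observability argument you sketch, both resting on the Carleman estimate of Proposition \ref{cor_puel}), and conclude by Liusternik. Two cosmetic differences: the paper does not impose $u(T,\cdot)=0$ in the definition of $E$; the terminal condition is forced by the requirement $\intq\rz^2|u|^2<\infty$, since the weight blows up as $t\to T$, and the same observation closes the proof of Theorem \ref{th_1.1}.

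The genuine gap is that you defer exactly the step on which the paper spends most of Section 4: proving that $H$ is well defined and $C^1$ from $E$ into $F$. This is not a routine Nemytskii argument, because the problematic term is
$\bigl(\bigl[\ell(\int_0^1u)-\ell(0)\bigr]a(x)u_x\bigr)_x$, which is quadratic and involves the second-order quantity $(au_x)_x$; one needs
$\intq \rz^2\bigl(\int_0^1u\bigr)^2|(au_x)_x|^2\le C\|(u,h)\|_E^4$ (Proposition \ref{prop3.1}). In the paper this follows from a chain of auxiliary weighted estimates that your proposal only names as ``the delicate part'': the hierarchy of weights $\re\le C\rc\le C\rz\le C\rho$ with $\rc^2=\re\rz$, the weighted energy estimates for the linearized problem obtained by multiplying by $\rc^2u$, $\re^2u_t$ and $-\re^2(au_x)_x$ (Lemmas \ref{prop2.5} and \ref{prop2.6}, giving Proposition \ref{lema3.2}, i.e.\ $\intq\re^2(|u_t|^2+|(au_x)_x|^2)\le C\|(u,h)\|_E^2$), and the sup-in-time bound $\sup_t e^{-2M/m(t)}(\int_0^1u)^2\le C\|(u,h)\|_E^2$ of Lemma \ref{lema3.1}, which uses that the modified weight is finite at $t=0$ (the function $m$ with $m(0)>0$) together with the embedding $H^1(0,T)\hookrightarrow C([0,T])$. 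Without these ingredients the claim $H\in C^1(E;F)$ — and hence the applicability of Liusternik's theorem — is unsupported; with them, the continuity of the Gateaux derivative (Lemma \ref{lema3.4}) also requires the same estimates to dominate the differences of the nonlocal coefficients. So your outline identifies the right obstruction but does not resolve it, whereas resolving it is the substantive content of the paper's proof beyond the Carleman inequality.
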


This paper is organized as follows. In Section 2, we state some preliminary results. In Section 3, we present a Carleman inequality to the solutions of (\ref{intro3}) and prove the null controllability for the linear system (\ref{intro2}). Finally, Section 4 is devoted to the local null controllability of (\ref{prob1}). 

\section*{Further Comments}

	\begin{enumerate}
		
		\item It would be interesting to know when the same results holds for other boundary conditions. For example, in \cite{alabau2006carleman}, the authors consider two types of degeneracy when $b(x,r)=a(x)$, namely weak and strong degeneracy, each type being associated with its own boundary conditions  at $x=0$. Under this context, we considered, in assumption \eqref{hyp_a}, a weakly degeneracy. It imposed a Dirichlet boundary condition $u(t,0)=1$. However,  if we consider a strong degeneracy, that is, $a \in C^1([0,1])$ satisfying $a (0)=0$, $a >0$ on $(0,1]$ and

		\begin{enumerate}
			\item[($i$)] $xa'(x)\leq Ka(x),\ \ \forall x\in [0,1] \mbox{ and some } K\in [1,2).$
			
			\item[($ii$)] $\begin{cases}
			\exists\theta \in (1,K], \ x\to \frac{a(x)}{x^\theta} \text{ is nondecreasing near } 0, & \text{ if } k>1,\\
			\exists\theta \in (0,1), \ x\to \frac{a(x)}{x^\theta} \text{ is nondecreasing near } 0, & \text{ if } k=1,\\
			\end{cases}$
		\end{enumerate}
		the natural boundary condition to impose at $x=0$ would be of Neumann type 
		$$(au_x)(t,0)=0, \ t\in (0,T).$$
		We believe that  analogous results can be achieved for this  type of degeneracy.
		
		\item Local null boundary controllability is a consequence of our result,  when  $\ell(r)=\operatorname{const.}$ and $b(x,r)=a(x)$. It means that, there exists $\varepsilon>0$ such that if $\n{u_0}_{H_a^1}\leq \varepsilon$, we can take a control $\tilde{h}\in L^\infty(0,T)$ such that the associated solution $u$ to 
		\begin{equation}
		\left\{\begin{array}{ll}
		u_t-\left(a\left(x   \right)u_x \right)_x+f(t,x,u)=0, & \text{in } (0,T)\times (0,1)\\
		u(t,0)=0, u(t,1)=\tilde{h}(t), & \text{in } (0,T)\\
		u(0,x)=u_0(x), & \text{in } (0,1), 
		\end{array}\right. \label{pb-bound} \end{equation}
		satisfies $u(T,x)=0$ in $(0,1)$. The proof of this result is standard: we consider an extended system in  $I_\delta=(0,1+\delta)$ and take $\omega\subset\subset (1,1+\delta)$, so the control $\tilde{h}$ will be the solution of the extended system restricted to $x=1$. When $b(x,r)=a(x)\ell(r)$ boundary controllability is an open question.

		\item A very interesting open question is concerned with global null controllability to \eqref{prob1}, however it does not seem easy. In order to get our main result, we have applied Theorem \ref{liusternik}, which requires the smallness assumptions on the data. Perhaps, to prove a global null controllability result one should use a global inverse mapping theorem, such as Hadamard-Levy Theorem (see \cite{de1994global}), which requires much more complicated estimates. Nevertheless, when $b(x,r)\equiv \operatorname{const.}$ and $f(t,x,u)=f(u)$ satisfies
		$$\lim_{s\to +\infty}\frac{f(s)}{|s|\log^{3/2}(1+|s|)}=0,$$
		the global null controllability holds, see \cite{fernandez2000null}.
	\end{enumerate}

\section{Preliminary Results}

\noindent

In this section we  state some technical results which are necessary to establish Theorem \ref{th_1.1}. At first, we need to introduce some weighted spaces related to the function $a$, namely
\begin{equation*}
H_a^1:= \{  u\in L^2(0,1);\ u\mbox{ is absolutely continuous in } [0,1],\
\sqrt{a}u_x\in L^2(0,1) \mbox{ and } u(1)=u(0)=0\},
\end{equation*}
with  the norm defined by $\|u\|_{H_a^1}^2:=\|u\|_{L^2(0,1)}^2+\|\sqrt{a}u_x\|_{L^2(0,1)}^2$,

\noindent and
$$H_a^2:= \{  u\in H_a^1;\ au_x\in H^1(0,1)\},$$
with the  norm defined by $\|u\|_{H_a^2}^2:=\|u\|_{H_a^1}^2+\|(au_x)_x\|_{L^2(0,1)}^2$.

\begin{rem}
	From inequality (\ref{prop_a}), we can  see that the function $x\mapsto\frac{x^r}{a(x)}$ is nondecreasing on $(0,1]$ for all $r\geq K$ . As a consequence, $\dps x^2/a(x)\leq 1/a(1)$, for all $x\in (0,1]$.
\end{rem} 
In order to deal with the degeneracy of $a$ we need the following  inequality  proved in \cite{alabau2006carleman}.

\begin{prop}[Hardy-Poincar\'e inequality]\label{HP}
	Let $a:[0,1]\to \R$ be a function such that $u\in C([0,1])$, $a(0)=0$ and $a>0$ on $(0,1]$. If there exists $\theta \in (0,1)$ such that  the function $x\mapsto a(x)/x^\theta$ is nonincreasing in $(0,1]$, then there exists a constant $C>0$ such that 
	\begin{equation}\label{HP_ineq}
	\into\frac{a(x)}{x^2}w^2(x)  \leq C\into a(x)|w'(x)|^2  ,
	\end{equation}
	for any function $w$ that is locally absolutely continuous on $(0,1]$, continuous at $0$ and satisfies
	\begin{center}
		$w(0)=0$, and $\dps\into a(x)|w'(x)|^2\   <+\infty$.
	\end{center}
\end{prop}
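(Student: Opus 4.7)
The plan is to prove the weighted Hardy--Poincaré inequality via an integration by parts in $x$ combined with a Cauchy--Schwarz / Young-type splitting, using the monotonicity hypothesis on $a(x)/x^\theta$ exactly where the constant $1/(1-\theta)$ arises. Concretely, I would introduce the auxiliary function
$$A(x)=\int_x^1 \frac{a(s)}{s^2}\,ds,\qquad A'(x)=-\frac{a(x)}{x^2},\quad A(1)=0,$$
and rewrite the left-hand side as
$$\int_0^1 \frac{a(x)}{x^2}w^2(x)\,dx=-\int_0^1 A'(x)w^2(x)\,dx=-\bigl[A(x)w^2(x)\bigr]_0^1+2\int_0^1 A(x)w(x)w'(x)\,dx.$$

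First I would check that both boundary terms vanish. The term at $x=1$ is zero since $A(1)=0$. For the term at $x=0$, I would use the hypothesis that $g(x):=a(x)/x^\theta$ is nonincreasing to get the two pointwise bounds
$$A(x)=\int_x^1\frac{g(s)}{s^{2-\theta}}\,ds\le g(x)\int_x^1 s^{\theta-2}\,ds\le \frac{g(x)\,x^{\theta-1}}{1-\theta},$$
together with (from $w(0)=0$ and Cauchy--Schwarz with weight $a$)
$$w^2(x)=\Bigl(\int_0^x w'(s)\,ds\Bigr)^2\le \Bigl(\int_0^x\frac{ds}{a(s)}\Bigr)\Bigl(\int_0^x a(s)(w'(s))^2\,ds\Bigr)\le \frac{x^{1-\theta}}{(1-\theta)g(x)}\int_0^x a(s)(w'(s))^2\,ds,$$
where the last step again uses that $g$ is nonincreasing, so $g(s)\ge g(x)$ for $s\le x$. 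Multiplying these bounds, the factors $g(x)$ and $x^{\theta\pm 1}$ cancel, leaving
$$A(x)w^2(x)\le \frac{1}{(1-\theta)^2}\int_0^x a(s)(w'(s))^2\,ds,$$
which tends to $0$ as $x\to 0^+$ by absolute continuity of the integral, since $a(w')^2\in L^1(0,1)$ by assumption.

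Second, and this is the key estimate, the same computation yields a uniform pointwise bound that lets me absorb the mixed term. Dividing the inequality $A(x)\le g(x)x^{\theta-1}/(1-\theta)$ by $a(x)/x=x^{\theta-1}g(x)$ gives
$$\frac{A(x)\,x}{a(x)}\le \frac{1}{1-\theta},\quad\text{i.e.}\quad \frac{A(x)^2}{a(x)}\le \frac{1}{(1-\theta)^2}\cdot\frac{a(x)}{x^2},\qquad x\in(0,1].$$
Applying Young's inequality $2Aww'=2\cdot(Aw/\sqrt{a})\cdot(\sqrt{a}\,w')\le \varepsilon\,A^2 w^2/a+\varepsilon^{-1}a(w')^2$ and integrating,
$$\int_0^1\frac{a(x)}{x^2}w^2\,dx\le \frac{\varepsilon}{(1-\theta)^2}\int_0^1\frac{a(x)}{x^2}w^2\,dx+\frac{1}{\varepsilon}\int_0^1 a(x)(w'(x))^2\,dx.$$
Choosing $\varepsilon=(1-\theta)^2/2$ absorbs the first term on the right into the left and yields the claim with the explicit constant $C=4/(1-\theta)^2$.

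The main obstacle is the boundary term at $x=0$, because $A(x)$ may blow up there (for example when $a(x)\sim x^\beta$ with $\beta<\theta$) while $w^2(x)\to 0$ without a quantitative rate. The monotonicity of $a/x^\theta$ is precisely what converts this indeterminate product into a genuine $o(1)$ quantity, and simultaneously produces the decisive inequality $A(x)x/a(x)\le 1/(1-\theta)$; without the hypothesis $\theta<1$, both the boundary vanishing and the absorption step fail, which is consistent with the Hardy constant degenerating as $\theta\uparrow 1$. A minor technicality to handle is that $a$ is only continuous (not $C^1$), so the proof must avoid differentiating $a$; the formulation via $A(x)$ and integration by parts in $w^2$ respects this.
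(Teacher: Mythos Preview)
The paper does not give its own proof of this proposition; it simply quotes the result from Alabau-Boussouira, Cannarsa and Fragnelli \cite{alabau2006carleman}. So there is nothing in the paper to compare your argument against.

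Your argument is correct and gives the explicit constant $C=4/(1-\theta)^2$. The two places where the monotonicity of $g(x)=a(x)/x^\theta$ is used --- the upper bound $A(x)\le g(x)x^{\theta-1}/(1-\theta)$ (using $g(s)\le g(x)$ for $s\ge x$) and the bound $\int_0^x a^{-1}\le x^{1-\theta}/[(1-\theta)g(x)]$ (using $g(s)\ge g(x)$ for $s\le x$) --- are exactly right, and together they kill the boundary term and yield the pointwise control $A(x)^2/a(x)\le (1-\theta)^{-2}\,a(x)/x^2$ needed for absorption. Your remark that the argument never differentiates $a$ is well taken.

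One minor technical point worth making explicit: in the final absorption step you subtract $\frac{1}{2}\int_0^1 \frac{a}{x^2}w^2$ from both sides, which presupposes this integral is finite. To close this, perform the integration by parts on $[\varepsilon,1]$ instead, use your bound $A(\varepsilon)w^2(\varepsilon)\le (1-\theta)^{-2}\int_0^\varepsilon a(w')^2\le (1-\theta)^{-2}\int_0^1 a(w')^2$, and apply Cauchy--Schwarz rather than Young to the cross term; this gives a quadratic inequality in $\bigl(\int_\varepsilon^1 \frac{a}{x^2}w^2\bigr)^{1/2}$ with an $\varepsilon$-independent bound, and monotone convergence finishes. With that adjustment the proof is complete.
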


Let us consider the problem
\begin{equation}\lnum \label{prob2.14}
\left\{\begin{array}{ll}
u_t-\left(a\left(x\right)u_x \right)_x+c(t,x)u=F(t,x)\chi_\omega, & (t,x)\in \dom, \\
u(t,0)=u(t,1)=0, & t\in (0,T),\\
u(0,x)=u_0(x), & x\in (0,1).
\end{array}\right.   \end{equation}
where $a$ satisfies assumption \ref{hyp_a}, $u_0\in L^2(0,1)$ and $F\in L^2(\dom)$. 

In \cite{alabau2006carleman}, the authors use  semigroup theory to obtain the next  well-posedness result for the problem (\ref{prob2.14}).

\begin{prop} If the function $a$ satisfies assumption \ref{hyp_a}, then for all $F\in L^2(\dom)$ and $u_0\in L^2(0,1)$, there exists a unique weak solution $u\in C^0([0,T];L^2(0,1))\cap L^2(0,T;H_a^1)$ of (\ref{prob2.14}). Moreover, if $u_0\in H_a^1$, then 
	$$u\in H^1(0,T;L^2(0,1))\cap L^2(0,T;H_a^2)\cap C^0([0,T];H_a^1),$$
	
	\noindent and there exists $C>0$ such that
	\begin{equation*}
	\sup_{t\in [0,T]}\|u\|_{H_a^1}^2+\int_0^T \|u_t\|^2_{L^2(0,1)}+\int_0^T \|(au_x)_x\|^2_{L^2(0,1)}
	\leq C\left( \|u_0\|^2_{H_a^1}+ \|F\|^2_{L^2(\dom)}\right).
	\end{equation*}
	
\end{prop}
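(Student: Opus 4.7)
The plan is to recast \eqref{prob2.14} as an abstract Cauchy problem on $H=L^2(0,1)$ and invoke standard semigroup and variational machinery, treating the zero-order term $cu$ as a time-dependent bounded perturbation since $c\in L^\infty(\dom)$. First I would introduce the operator $A_0 u := (a(x) u_x)_x$ on $H$ with domain $D(A_0) := H_a^2$, and show that $A_0$ is nonpositive and self-adjoint. Symmetry reduces to $(A_0 u, v)_{L^2} = -\int_0^1 a u_x v_x\,dx$ for $u,v \in H_a^2$, once one verifies that the boundary term $[a u_x v]_0^1$ vanishes: it does so at $x=1$ because $v(1)=0$, and at $x=0$ by combining the Dirichlet condition $v(0)=0$, the integrability of $\sqrt{a}\, u_x$, and the Hardy-Poincar\'e inequality of Proposition \ref{HP}. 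Nonpositivity is immediate: $(A_0 u, u) = -\int_0^1 a u_x^2 \leq 0$. Consequently $A_0$ generates a contractive analytic semigroup on $H$.

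Second, existence and uniqueness of a weak solution for $u_0\in L^2(0,1)$ and $F\in L^2(\dom)$ would be obtained via the Lions-Magenes variational framework on the Gelfand triple $H_a^1 \hookrightarrow L^2(0,1) \hookrightarrow (H_a^1)^{*}$: the bilinear form
$$\mathfrak{a}(t;u,v) := \int_0^1 a(x) u_x v_x\,dx + \int_0^1 c(t,x) uv\,dx$$
is continuous on $H_a^1 \times H_a^1$ and, after the exponential shift $u \mapsto e^{-\lambda t} u$ with $\lambda > \|c\|_\infty$, coercive on $H_a^1$. Lions' theorem then delivers a unique $u \in C([0,T]; L^2(0,1)) \cap L^2(0,T; H_a^1)$, and the standard energy estimate obtained by testing against $u$ gives the bound on $\|u\|_{L^\infty_t L^2_x} + \|\sqrt{a}\, u_x\|_{L^2_{t,x}}$ in terms of $\|u_0\|_{L^2}$ and $\|F\|_{L^2(\dom)}$.

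Third, for $u_0 \in H_a^1$ I would test the equation by $u_t$ (after a Galerkin or time-regularization approximation), obtaining
$$\|u_t\|_{L^2(0,1)}^2 + \frac{1}{2}\frac{d}{dt}\int_0^1 a(x) u_x^2\,dx = -\int_0^1 c\, u\, u_t\,dx + \int_\omega F u_t\,dx,$$
so that Young's inequality absorbs $\|u_t\|_{L^2}^2/2$ on the right. Combined with the $L^2$ estimate above and Gronwall's lemma, this produces the bounds on $\sup_{t\in[0,T]} \|u\|_{H_a^1}^2$ and on $\|u_t\|_{L^2(\dom)}$. Finally, reading the PDE pointwise as $(au_x)_x = u_t + cu - F\chi_\omega$ yields $(au_x)_x \in L^2(\dom)$, i.e.\ $u \in L^2(0,T; H_a^2)$, closing the estimate.

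The principal obstacle is the rigorous justification of these computations at the degenerate endpoint $x=0$, where the nondegenerate density and elliptic regularity tools do not apply directly. The cleanest strategy, followed in \cite{alabau2006carleman}, is to regularize by replacing $a$ with $a_\varepsilon(x) := a(x) + \varepsilon$ on $[0,1]$, work with the resulting uniformly parabolic problem where the integrations by parts and testing against $u_t$ are classical, derive all the above estimates uniformly in $\varepsilon$ with constants controlled via the Hardy-Poincar\'e inequality (Proposition \ref{HP}), and pass to the limit $\varepsilon \to 0^+$ using weak-$\ast$ compactness in $L^\infty(0,T; H_a^1)$ together with weak compactness in $L^2(0,T; H_a^2)$ and in $H^1(0,T; L^2)$.
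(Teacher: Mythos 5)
Your plan is sound and matches the route the paper itself relies on: the paper does not prove this proposition but quotes it from \cite{alabau2006carleman}, where well-posedness is obtained by exactly the semigroup/variational machinery you describe (self-adjointness and dissipativity of $u\mapsto (a u_x)_x$ on $H_a^2$, Lions' theorem on the triple $H_a^1\hookrightarrow L^2\hookrightarrow (H_a^1)^{*}$, the energy estimate with test function $u_t$, and the degeneracy handled by approximation together with the Hardy--Poincar\'e inequality). The only caveats are routine omissions that do not affect correctness: generation of the semigroup requires maximal dissipativity (surjectivity of $I-A_0$, via Lax--Milgram on $H_a^1$) and not just the symmetry computation you give, and the vanishing of the boundary term at $x=0$ is simpler than you suggest, since for $u\in H_a^2$ one has $a u_x\in H^1(0,1)\subset C([0,1])$ while the other factor is continuous and vanishes at $0$.
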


\section{The linearized problem associated to (\ref{prob1})}

\noindent

As we have said before, the local null controllability for (\ref{prob1}) will be obtained from  the global null controllability of its linearized problem. In order to obtain that, we consider 
the problem
\begin{equation}\lnum \label{prob2}
\left\{\begin{array}{ll}
v_t+\left(a\left(x\right)v_x \right)_x-c(t,x)v=F(t,x), & (t,x)\in \dom, \\
v(t,1)=v(t,0)=0 & t\in (0,T),\\
\end{array}\right.   \end{equation}
which is  the adjoint problem of (\ref{prob2.14}) with the forcing term F.

Now we will introduce some functions and notation which will be used from now on. Let $\omega'=(\alpha',\beta')\subset\subset \omega$ and let $\psi:[0,1]\to \R$ be a $C^2 $  function such that 
\begin{align}\label{functions1}
\psi(x):=\begin{cases}
\phantom{-}\int_0^x \frac{y}{a(y)}dy,\ x\in [0,\alpha')\\
-\int_{\beta'}^x \frac{y}{a(y)}dy,\ x\in [\beta',1].
\end{cases}
\end{align}

For $\lambda\geq\lambda_0$ define
\begin{multline}\label{functions}
\theta(t):=\frac{1}{[t(T-t)]^4}, \ \eta(x):=e^{\lambda(|\psi|_\infty+\psi)},\ \sigma(x,t):=\theta(t)\eta(x) \mbox{ and }\\
\varphi(x,t):=\theta(t)(e^{\lambda(|\psi|_\infty+\psi)}-e^{3\lambda|\psi|_\infty}).
\end{multline}


\subsection{Carleman inequalities }
\noindent

The aim of this section is to prove a Carleman type inequality  for solutions of the Problem \ref{prob2}  with weights which do not vanish at $t=0$. To do this, first we will present a result which we adapted from \cite{AAC} and that the proof will be given in the Appendix \ref{appendix}.

\begin{prop}\label{cor_hat}
	There exist $C>0$ and $\lambda_0,s_0>0$ such that every solution  $v$ of (\ref{prob2}) satisfies, for all $s\geq s_0$ and $\lambda\geq \lambda_0$, 
	\begin{equation}
	\intq e^{2s\varphi}\left((s\lambda)\sigma av_x^2+(s\lambda)^2\sigma^2v^2 \right) 
	\leq C\left(\intq e^{2s\varphi}|F|^2\   +(\lambda s)^3\intw e^{2s\varphi}\sigma^3v^2\   \right)\label{carleman_hat}
	\end{equation} 
\end{prop}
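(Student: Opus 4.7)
My plan is to follow the standard Fursikov--Imanuvilov conjugation scheme adapted to the degenerate setting as in \cite{AAC}. The only technical novelty relative to \cite{AAC} is that $a$ is no longer the explicit power $x^\alpha$ but only satisfies \eqref{prop_a}, so the proof amounts to a reinspection of the $x^\alpha$-case in which each use of the explicit power is replaced by the structural inequality $xa'(x)\le Ka(x)$ with $K<1$.

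Set $w=e^{s\varphi}v$. I would rewrite the conjugated equation as $P^+w+P^-w=e^{s\varphi}(F+cv)$, with $P^+$ formally self-adjoint and $P^-$ formally antisymmetric in $L^2(\dom)$, and apply the standard identity
\[
\|P^+w\|^2+\|P^-w\|^2+2(P^+w,P^-w)_{L^2}=\|e^{s\varphi}(F+cv)\|^2.
\]
Since $c\in L^\infty$, the $cv$-term is absorbable once $s$ is large, so the whole point is to expand the cross product $(P^+w,P^-w)$ by integration by parts in both $t$ and $x$.

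Using $\varphi_x=\lambda\sigma\psi'$ and $\psi'(x)=\pm x/a(x)$ on $[0,\alpha')\cup(\beta',1]$, a direct calculation yields $a\varphi_x^2=\lambda^2\sigma^2 x^2/a$ and $(a\varphi_x)_x=\lambda^2\sigma x^2/a+O(\lambda\sigma)$, so the leading positive distributed terms produced by the cross product are
\[
(s\lambda)^3\intq e^{2s\varphi}\sigma^3\frac{x^2}{a}v^2+(s\lambda)\intq e^{2s\varphi}\sigma a v_x^2.
\]
The first integral already controls the zero-order part of \eqref{carleman_hat} via the pointwise bound $x^2/a(x)\le 1/a(1)$ recorded in the Remark; if needed, the Hardy--Poincar\'e inequality of Proposition \ref{HP} lets one convert $a v^2/x^2$ back into $a v_x^2$. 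The boundary contributions at $x=1$ vanish by the Dirichlet condition; those at $x=0$ vanish as $x\to 0^+$ because of the degeneracy of $a$, combined with \eqref{prop_a} and the $H^1_a$-regularity of $v$. All remaining distributed terms are strictly lower order in $(s,\lambda)$ and are absorbed by first choosing $\lambda\ge\lambda_0$ large and then $s\ge s_0(\lambda)$ large.

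There remains the middle interval $[\alpha',\beta']$, where $\psi'$ need not be $\pm x/a$ and the two positive terms above degenerate. To recover a global estimate I would exploit the observation: pick a cut-off $\xi\in C_c^\infty(\omega)$ with $\xi\equiv 1$ on $[\alpha',\beta']$, multiply the equation for $v$ by $\xi^2(s\lambda)^3\sigma^3 e^{2s\varphi}v$, and integrate by parts to obtain
\[
(s\lambda)^3\int_0^T\!\!\int_{\alpha'}^{\beta'}e^{2s\varphi}\sigma^3 v^2\le C\Bigl[(s\lambda)^3\intw e^{2s\varphi}\sigma^3 v^2+\varepsilon(s\lambda)\intq e^{2s\varphi}\sigma a v_x^2+C_\varepsilon\intq e^{2s\varphi}|F|^2\Bigr],
\]
and absorb the $\varepsilon$-term into the left-hand side. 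The main obstacle is the bookkeeping in the cross-product expansion: several intermediate terms do not obviously match one of the two dominant integrals above, and treating them systematically requires the pointwise inequality $xa'\le Ka$ with $K<1$ together with the ordered choice of parameters just described. Once this accounting is complete, the structure of the proof is identical to the one in \cite{AAC} and delivers \eqref{carleman_hat}.
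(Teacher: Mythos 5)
Your overall scheme (conjugation $w=e^{s\varphi}v$, expansion of the cross product, splitting $[0,\alpha']\cup\omega'\cup[\beta',1]$) is the paper's, but two of your key steps fail as stated. First, the zero-order term. The cross product does not give $(s\lambda)^3\sigma^3\frac{x^2}{a}v^2$ on all of $(0,1)$: on $[\beta',1]$ the sign-definite terms carry the weights $a^2|\psi'|^4\sigma^3$ and $a^2|\psi'|^2\sigma$, and on $\omega'$ nothing is sign-definite. More importantly, your mechanism for passing to $(s\lambda)^2\sigma^2v^2$ is backwards: the bound $x^2/a\le 1/a(1)$ says $\frac{x^2}{a}\sigma^3v^2\le C\sigma^3v^2$, i.e.\ it bounds the good term from \emph{above}; to get $(s\lambda)^2\sigma^2v^2\le C(s\lambda)^3\sigma^3\frac{x^2}{a}v^2$ pointwise you would need $a/x^2\le Cs\lambda\sigma$, which fails near $x=0$ since \eqref{prop_a} yields $a(x)\ge a(1)x^{K}$ with $K<1$, hence $a/x^2\to\infty$. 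The Hardy--Poincar\'e inequality is therefore not an optional fallback but the essential step: one writes $(s\lambda)^2\sigma^2w^2\le \tfrac12(s\lambda)^3\sigma^3\tfrac{x^2}{a}w^2+\tfrac12 s\lambda\sigma\tfrac{a}{x^2}w^2$ and applies Proposition \ref{HP} to $\into\frac{a}{x^2}(\sigma^{1/2}w)^2$, converting it into gradient terms, after splitting the integrals over $[0,\alpha']$, $\omega'$, $[\beta',1]$ and using $x^2/a\le Ca^2|\psi'|^4$ on $[\beta',1]$. Without this, the second term on the left of \eqref{carleman_hat} is unproven.

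Second, the observation region. Your cutoff is aimed at a term that needs no argument: $(\alpha',\beta')=\omega'\subset\omega$, so $(s\lambda)^3\int_0^T\int_{\alpha'}^{\beta'}\wei\sigma^3v^2$ is already part of the observation term. What genuinely requires work, and is missing from your proposal, is the \emph{local gradient} term: the cross-product estimates unavoidably leave $\lambda s\intwl\wei\sigma a v_x^2$ (equivalently $\lambda s\intwl\sigma w_x^2$) on the right-hand side, coming from the derivatives of $\psi$ on $\omega'$, and the left side of \eqref{carleman_hat} also needs the gradient term over $\omega'$. The paper handles both by multiplying the equation by $\lambda s\,\wei\sigma v\chi$ with $\chi\in C_0^\infty(\omega)$, $\chi\equiv1$ on $\omega'$, and absorbing; the mild weight $\sigma$ matters here. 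With your weight $\sigma^3$, the time derivative of the weight produces a local term of size $s^4\lambda^3\sigma^{17/4}\wei v^2$ (since $|\varphi_t|\le C\sigma^{5/4}$), which is not dominated by $(s\lambda)^3\sigma^3\wei v^2$ uniformly in $s$ because the ratio $s\sigma^{5/4}$ is unbounded; so the inequality you claim from that multiplier does not hold in the stated form. A minor further point: the boundary term $-s\int_0^T a^2\varphi_xw_x^2\big\vert_{x=0}^{x=1}$ does not vanish at $x=1$ (the Dirichlet condition kills $w$, not $w_x$); it is discarded because $\varphi_x(t,1)<0$ gives it a favorable sign, as in Lemma \ref{A1}.
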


Consider a function $m\in C^\infty([0,T])$ satisfying
$$
\left\{\begin{array}{ll}
m(t)\geq t^4(T-t)^4, & t\in \left[0, T/2\right];\\
m(t)= t^4(T-t)^4, & t\in \left[T/2,T\right];\\
m(0)>  0, &
\end{array}
\right.
$$
and define

$$\tau(t):=\frac{1}{m(t)},\ \ \varsigma(x,t):=\tau(t)\eta(x)\ \ \mbox{ and }\ \ A(t,x):=\tau(t)(e^{\lambda(|\psi|_\infty+\psi)}-e^{3\lambda|\psi|_\infty}),$$
where $(t,x)\in [0,T)\times [0,1]$. As usual, we introduce the operators
\begin{align*}
&\Gamma(s,\xi):=\G{\xi} \\
\mbox{and} & \\
&\Gamma_1(s,\xi):=\Ga{\xi}, \ \Gamma_2(s,\xi):=\Gb{\xi}.
\end{align*}

\begin{prop}[Carleman Estimate]\label{cor_puel}
	There exist $C>0$ and $s_0>0$ such that every solution  $v$ of (\ref{prob2}) satisfies, for all $s\geq s_0$, 
	\begin{equation*}
	\intq e^{2sA}\left((s\lambda)\varsigma av_x^2+(s\lambda)^2\varsigma^2v^2\right)   
	\leq C\left(\intq e^{2sA}|F|^2\   +s^3\lambda^3\intw e^{2sA}\varsigma^3v^2\   \right)
	\end{equation*} 
	
\end{prop}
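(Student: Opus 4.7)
The strategy is to split the time interval at $t=T/2$ and combine Proposition \ref{cor_hat} on $[T/2,T]$, where the new weights coincide with the old, with classical parabolic energy estimates on $[0,T/2]$, where the new weights are merely bounded quantities.

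By construction $m(t)=t^4(T-t)^4$ on $[T/2,T]$, so $\tau=\theta$, $\varsigma=\sigma$ and $A=\varphi$ there; hence the portion of the left-hand side of \eqref{carleman_hat} on $[T/2,T]$ is exactly $\Gamma_2(s,v)$. On $[0,T/2]$, the map $t\mapsto\theta(t)$ is bounded above (it actually vanishes as $t\to 0^+$), whereas $\tau=1/m$ is bounded above and below by positive constants; hence $e^{2s\varphi}\leq C(s,\lambda)\leq C'(s,\lambda) e^{2sA}$ and the same holds for the observation weights. Applying Proposition \ref{cor_hat} therefore gives
\begin{equation*}
\Gamma_2(s,v)\leq C\left(\intq e^{2sA}|F|^2 + s^3\lambda^3\intw e^{2sA}\varsigma^3 v^2\right).
\end{equation*}

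To handle $\Gamma_1(s,v)$, I would observe that on $[0,T/2]$ the quantities $e^{2sA}$ and $\varsigma^k$ are bounded above by constants $C(s,\lambda)$, so
\begin{equation*}
\Gamma_1(s,v)\leq C(s,\lambda)\int_0^{T/2}\into (a v_x^2+v^2).
\end{equation*}
Multiplying \eqref{prob2} by $v$ and integrating on $(0,1)$ kills the boundary terms by the Dirichlet conditions and yields
\begin{equation*}
\frac{d}{dt}\|v(t)\|^2_{L^2(0,1)}=2\into av_x^2 + 2\into cv^2 + 2\into Fv,
\end{equation*}
and a standard Gronwall argument (using $\|c\|_\infty<\infty$) then gives the energy inequality
\begin{equation*}
\sup_{t\in[0,T/2]}\|v(t)\|^2_{L^2(0,1)} + \int_0^{T/2}\into av_x^2 \leq C\left(\|v(T/2)\|^2+\|F\|^2_{L^2(\dom)}\right),
\end{equation*}
so that $\Gamma_1(s,v)\leq C(s,\lambda)\bigl(\|v(T/2)\|^2+\|F\|^2_{L^2(\dom)}\bigr)$.

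It remains to bound $\|v(T/2)\|^2$ by the right-hand side of the statement. Running the same energy argument on $[T/2,3T/4]$ in the opposite direction yields $\|v(T/2)\|^2\leq C(\|v(t)\|^2+\|F\|^2_{L^2(\dom)})$ for each $t\in[T/2,3T/4]$; averaging in $t$ over this interval and using that $e^{2sA}\varsigma^2$ is bounded below by a positive constant $c(s,\lambda)>0$ on $[T/2,3T/4]$, I would conclude
\begin{equation*}
\|v(T/2)\|^2\leq C(s,\lambda)\,\Gamma_2(s,v)+C\|F\|^2_{L^2(\dom)}.
\end{equation*}
Assembling everything, $\Gamma(s,v)=\Gamma_1(s,v)+\Gamma_2(s,v)\leq C(s,\lambda)\,\Gamma_2(s,v)+C(s,\lambda)\|F\|^2_{L^2(\dom)}$, which together with the bound already proved for $\Gamma_2$ gives the required Carleman estimate after fixing $\lambda\geq\lambda_0$ and absorbing it into $C$.

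The main obstacle is the bookkeeping of constants: the coefficients in the energy estimate on $[0,T/2]$ depend on $s$ and $\lambda$, and one has to make sure they do not swamp the inequality. This is not actually a serious issue, because Proposition \ref{cor_hat} already produces arbitrary polynomial powers of $s\lambda$ on the left-hand side, so the $s,\lambda$-dependent losses incurred in the energy step are harmless provided $\lambda$ is frozen at some value $\geq\lambda_0$ and $s_0$ is taken sufficiently large.
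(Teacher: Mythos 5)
Your strategy coincides with the paper's own proof: split the estimate at $t=T/2$, use that $\tau=\theta$, $\varsigma=\sigma$ and $A=\varphi$ on $[T/2,T]$ so that Proposition \ref{cor_hat} controls that portion directly, and recover the portion on $[0,T/2]$ from backward energy estimates that transfer the information to an intermediate time interval on which all the weights are bounded above and below. The only genuine difference is how the intermediate term is absorbed: the paper invokes Proposition 2.3 of \cite{clark2013theoretical} to bound $\Gamma_1(s,v)$ by $\int_{T/4}^{3T/4}\into e^{2s\varphi}(s\lambda)^2\sigma^2v^2$ plus a weighted $F$-term and then applies Proposition \ref{cor_hat} a second time, whereas you place the intermediate interval $[T/2,3T/4]$ inside $[T/2,T]$ and absorb it directly into the part you have already estimated, which spares the second application of Proposition \ref{cor_hat} and makes the argument self-contained.

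Three details need fixing. First, $\theta(t)=[t(T-t)]^{-4}$ blows up as $t\to0^+$; it neither vanishes nor stays bounded on $[0,T/2]$. What you actually need is that $e^{2s\varphi}$ and $e^{2s\varphi}\sigma^3$ are bounded there, which holds because $\varphi\to-\infty$ as $t\to0^+$ and the exponential decay beats the polynomial growth of $\sigma^3$; this is precisely the comparison $e^{2s\varphi}\le e^{2sA}$ and $e^{2s\varphi}\sigma^3\le Ce^{2sA}\varsigma^3$ with which the paper opens its proof. Second, you cannot finish with $\|F\|^2_{L^2(\dom)}$ on the right-hand side: since $e^{2sA}\to0$ as $t\to T^-$, the unweighted norm over the whole cylinder is not dominated by $\intq e^{2sA}|F|^2$. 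The cure is already implicit in your argument: the Gronwall steps only integrate $F$ over $[0,3T/4]$, where $e^{2sA}$ is bounded below, so carry $\int_0^{3T/4}\into|F|^2$ throughout (the paper's intermediate inequality keeps $\int_0^{3T/4}\into e^{2sA}|F|^2$ for exactly this reason). Third, your closing remark on constants is not right as stated: the lower bounds of $e^{2sA}\varsigma^2$ on $[T/2,3T/4]$ and of $e^{2sA}$ on $[0,3T/4]$ decay like $e^{-cs}$, so the loss incurred in the energy step grows exponentially in $s$ and cannot be compensated by taking $s_0$ large. What your argument delivers is the inequality for each fixed $s\ge s_0$ and $\lambda\ge\lambda_0$ with a constant $C(s,\lambda)$; that is all the null-controllability application uses (and is the level of precision of the step the paper delegates to the reference), but it is weaker than the uniform-in-$s$ constant literally asserted in the statement.
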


\begin{proof}
	Firstly, we observe that $e^{2s\varphi}\leq e^{2sA}$ and $e^{2s\varphi}\sigma^3 \leq C e^{2sA}\varsigma^3$ for all $(t,x)\in [0,T]\times [0,1]$.

	Secondly, since $\tau=\theta$ and $A=\varphi$ in $[T/2,T]$, Carleman inequality \eqref{carleman_hat} implies
	$$\Gamma_2(s,v)\leq C\left(\intq e^{2A\varphi}|F|^2\   +(\lambda s)^3\intw e^{2sA}\varsigma^3v^2\   \right).$$

	Following the arguments developed in \cite{clark2013theoretical}, Proposition 2.3 page 488, we can prove that
	\begin{equation*}
	\Gamma_1(s,v) \leq C\left(\int_{T/4}^{3T/4}\into e^{2s\varphi}(s\lambda)^2\sigma^2|v|^2+\int_0^{3T/4}\into e^{2sA}|F|^2\right).
	\end{equation*}
	
	Finally, we can use Proposition \ref{cor_hat} and obtain the result.
	
\end{proof}

\subsection{A null controllability result for the linear system}
\noindent

The last goal of this section is to establish a result of global null controllability for the  linear  problem
\begin{equation}\lnum \label{prob3}
\left\{\begin{array}{ll}
u_t-\left(a\left(x\right)u_x \right)_x+c(t,x)u=h\chi_\omega + g, & (t,x)\in \dom; \\
u(t,1)=0,\ u(t,0)=0, & t\in (0,T);\\
u(0,x)=u_0(x),
\end{array}\right.
\end{equation}
where $g\in L^2(\dom)$ , $h\in L^2(\domw)$ and $a$ satisfy the assumption \ref{hyp_a}. In order to state this result, we need to define the weight functions
$$\rho:=e^{-sA}, \ \ \ \rz:=e^{-sA}\varsigma^{-1},\ \ \, \rc:=e^{-sA}\varsigma^{-2},\ \ \, \re:=e^{-sA}\varsigma^{-3}, $$
which satisfy  $\re\leq C\rc\leq C\rz\leq C\rho$ and $\rc^2=\re\rz$.

\begin{prop}\label{prop2.4}
	If $u_0\in H_a^1(0,1)$ and the function $g$ fulfills 
	$$\intq \rho_0^2|g|^2  <\infty,$$
	then  the system (\ref{prob3}) is null-controllable. More precisely, there exists a control $h\in L^2(\domw)$ with associated state $u$ satisfying
	\begin{equation}\label{eq25}
	\intw \rho_\ast^2|h|^2  <+\infty, \ \ \ \intq\rho_0^2|u|^2  <+\infty.
	\end{equation}
	In particular, $u(T,x)\equiv 0$, for all $x\in [0,1]$.
\end{prop}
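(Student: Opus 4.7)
The plan is to apply the Fursikov--Imanuvilov variational scheme, which turns the Carleman inequality of Proposition \ref{cor_puel} into null controllability with the prescribed weighted bounds. Let
$$P_0 := \{q \in C^2([0,T]\times[0,1]) : q(t,0)=q(t,1)=0\},$$
equip it with the bilinear form
$$\mathcal{A}(p,q) := \intq e^{2sA}(\mathcal{L}p)(\mathcal{L}q) + \intw e^{2sA}\varsigma^3\, p\, q,$$
where $\mathcal{L}q := q_t + (aq_x)_x - cq$ is the operator appearing in (\ref{prob2}), and introduce the linear functional
$$\ell(q) := -\intq g\, q - \into u_0(x)\, q(0,x)\, dx.$$

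By Proposition \ref{cor_puel} applied with $F = \mathcal{L}q$, one obtains $\intq e^{2sA}\varsigma^2 q^2 \le C\,\mathcal{A}(q,q)$, so $\mathcal{A}$ is coercive and defines a scalar product on $P_0$. Continuity of $\ell$ in the induced norm has two pieces: the term $\intq g\, q$ is dominated by Cauchy--Schwarz using $\rho_0^{-2} = e^{2sA}\varsigma^2$ together with the hypothesis $\intq \rho_0^2|g|^2 < \infty$; the contribution $\into u_0\, q(0,\cdot)$ is controlled through an auxiliary energy inequality $\|q(0,\cdot)\|_{L^2(0,1)}^2 \le C\,\mathcal{A}(q,q)$, derived by multiplying the adjoint equation by $q$, integrating over $(0,T/2)$, a mean-value trace argument at some intermediate time $t_0 \in (T/4,T/2)$, and the Hardy--Poincar\'e inequality (Proposition \ref{HP}) to absorb the degeneracy at $x=0$. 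Denoting by $P$ the completion of $P_0$ in the norm induced by $\mathcal{A}$, the Riesz theorem produces a unique $\bar p \in P$ with $\mathcal{A}(\bar p, q) = \ell(q)$ for every $q \in P$.

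Setting
$$\hat{u} := e^{2sA}\mathcal{L}\bar p, \qquad \hat{h} := e^{2sA}\varsigma^3\,\bar p\, \chi_\omega,$$
the identity $\mathcal{A}(\bar p, q) = \ell(q)$ becomes, for $q \in P_0$,
$$\intq \hat u\, \mathcal{L}q + \intw \hat h\, q = -\intq g\, q - \into u_0(x)\, q(0,x)\, dx,$$
which is precisely the weak formulation of (\ref{prob3}) for the pair $(\hat u, \hat h)$ with initial datum $u_0$, source $g$ and terminal condition $\hat u(T,\cdot) \equiv 0$. A routine density argument (as in \cite{fernandez2012null,clark2013theoretical}) identifies $\hat u$ with the strong solution provided by the well-posedness proposition. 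Since $\varsigma$ is bounded below by a positive constant on $\dom$, the finiteness of $\mathcal{A}(\bar p, \bar p)$ yields at once
$$\intw \rho_\ast^2 |\hat h|^2 = \intw e^{2sA}\bar p^2 \le C\intw e^{2sA}\varsigma^3 \bar p^2 < \infty, \qquad \intq \rho_0^2 |\hat u|^2 = \intq e^{2sA}\varsigma^{-2}|\mathcal{L}\bar p|^2 \le C\,\mathcal{A}(\bar p,\bar p) < \infty,$$
and the second bound, combined with $\rho_0(t,\cdot) \to \infty$ as $t \to T$, forces $\hat u(T,\cdot) \equiv 0$. The principal technical hurdle is the auxiliary $\|q(0,\cdot)\|_{L^2}^2$-estimate above, where the degenerate coefficient $a$ meets the time-boundary $\{t=0\}$ and the Hardy--Poincar\'e inequality together with assumption (\ref{prop_a}) become indispensable; once this is in hand, the remainder of the proof is routine bookkeeping.
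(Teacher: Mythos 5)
Your proposal is correct, but it proves the proposition by a genuinely different route than the paper. You run the classical Fursikov--Imanuvilov variational scheme: the bilinear form $\mathcal{A}(p,q)=\intq e^{2sA}(\mathcal{L}p)(\mathcal{L}q)+\intw e^{2sA}\varsigma^3 pq$ is coercive thanks to Proposition \ref{cor_puel} (legitimately applied, since any smooth $q$ vanishing at $x=0,1$ solves \eqref{prob2} with $F:=\mathcal{L}q$), Riesz representation in the completion gives $\bar p$, and the control and state are the explicit formulas $\hat h=e^{2sA}\varsigma^3\bar p\chi_\omega$, $\hat u=e^{2sA}\mathcal{L}\bar p$; your weight bookkeeping ($\rho_0^{-2}=e^{2sA}\varsigma^2$, $\varsigma\geq c>0$ because $m$ is bounded and $\eta\geq 1$, $\rho_0\to\infty$ as $t\to T$) and the weak-formulation identity are all consistent, so \eqref{eq25} and $u(T)\equiv 0$ follow at once from $\mathcal{A}(\bar p,\bar p)<\infty$. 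The paper proceeds instead by penalization: for each $n$ it minimizes $J_n(u,h)=\frac12\intq\rho_{0,n}^2|u|^2+\frac12\intw\rho_{\ast,n}^2|h|^2$ over the affine set of controlled trajectories of \eqref{prob3}, writes the optimality system with an adjoint state $p_n$ satisfying $p_n=\rho_{\ast,n}^2h_n$ on $\omega$ (note this matches your formula for $\hat h$), proves the uniform bound $J_n(u_n,h_n)\leq C\sqrt{J_n(u_n,h_n)}$ --- this is where the Carleman/observability input is hidden behind ``standard arguments'' --- and passes to weak limits; the truncated weights avoid working in an abstract completion and sidestep the continuity-of-the-linear-form issue, at the cost of a limiting procedure and a less explicit uniform estimate. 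Your one-shot approach buys explicit formulas and immediate weighted bounds, but must carry exactly the two burdens you identified: the trace estimate $\|q(0,\cdot)\|_{L^2(0,1)}^2\leq C\,\mathcal{A}(q,q)$ (which uses that $e^{2sA}$ does not degenerate at $t=0$ --- precisely why the paper chose $m(0)>0$) and the identification of the variational solution with the strong one. One small correction: Hardy--Poincar\'e is not actually needed for that trace estimate; multiplying $q_t+(aq_x)_x-cq=\mathcal{L}q$ by $q$, the degenerate term $\into a\,q_x^2$ enters with the favorable sign, so a backward Gronwall argument plus averaging over $t_0\in(T/4,T/2)$ (where the Carleman term controls $\|q(t_0)\|_{L^2}$ and the weight is bounded below) suffices.
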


\begin{proof}
	Firstly, for each $n\in \mathbb N$, we define
	$$A_{n} (t,x) =\frac{ A(T-t)^{4}}{(T-t)^{4} +\frac{1}{n}} \ \ \emph{and} \ \ \varsigma _{n} (t,x) =\frac{\varsigma (T-t)^{4}}{(T-t)^{4} +\frac{1}{n}} ,$$
	where $t\in (0,T)$. We also consider
	$$\displaystyle \rho _{n} = e^{-s A_{n}} \emph{, } \rho _{0 ,n} =\rho _{n} \varsigma _{n}^{-1} \ \ \emph{and} \ \ \rho _{\ast ,n} =\rho _{\ast} m_{n} = \rho \varsigma^{-3} m_{n} ,$$
	where $m_{n} (x)=1$ if $x\in \omega$ and $m_{n} (x)=n$ if $x\not\in \omega$.
	
	For any two functions $u\in L^{2} (\dom)$ and $h\in L^{2} (\domw)$, define
	$$\displaystyle J_{n} (u,h)=\frac{1}{2} \intq \rho _{0 ,n}^{2} |u|^{2}   +\frac{1}{2} \intw \rho _{\ast ,n}^{2} |h|^{2}    \emph{.}$$
	Since each $J_{n}$ is lower semicontinuous, strictly convex and coercive, there exists $(u_{n} ,h_{n}) \in \Lambda:=\{(u,h); h\in L^{2} ((0,T)\times \omega ) \ \ \mbox{and} \ \ (u,h) \ \ \mbox{solves} \ \ (\ref{prob3}) \}$, such that
	$$\displaystyle J_{n} (u_{n} ,h_{n}) = \min \{ J_{n} (u,h); (u,h) \in \Lambda\}.$$
	In this case, $(u_n,h_n)$ satisfies
	\begin{equation}\label{S1}
	\left\{%
	\begin{array}{ll}
	u_{n,t} - (a u_{n,x} )_{x} +c u_{n} = h_{n} \chi _{\omega} +g , & {\ (t,x)\in \dom ,} \\
	u_{n} (t,0) = u_{n} (t,1) = 0 , & {\ t\in (0,T) ,} \\
	u_{n} (0,x) = u_{0 } , & {\ x\in (0,1) ,}
	\end{array}%
	\right. 
	\end{equation}
	and Lagrange's Principle assures the existence of a function $p_{n}$ solving
	\begin{equation}\label{S2}
	\left\{\begin{array}{ll}
	-p_{n,t} - (a p_{n,x} )_{x} +cp_{n} = -\rho _{0 ,n}^{2} u_{n} , & \  (t,x)\in \dom ,\\
	p_{n} (t,0)= p_{n} (t,1) = 0  , & {\ t\in (0,T) ,} \\
	p_{n} (T,x)=0 , & {\ x\in (0,1) ,} \\
	p_{n} (t,x) = \rho_{\ast ,n}^{2} h_{n} , & \ (t,x)\in (0,T) \times \omega \emph{.}
	\end{array}\right.
	\end{equation}

	By standard arguments, we can prove that $J_{n} (u_{n} ,h_{n} ) \leq C \sqrt{J_{n} (u_{n} ,h_{n} )}$ for all $n\in \mathbb N$, i.e., \linebreak $(J_{n} (u_{n} ,h_{n} ) )_{n=1}^{\infty}$ is a numerical bounded sequence.
	
	Since $\rho_{0,n}^2,\rho_{\ast,n}^2\geq C$, we deduce that
	$$\intq |u_n|^2  +\intw |h_n|^2  \leq CJ_n(u_n,h_n)\leq C.$$
	It means that there exist $u\in L^2(\dom)$ and $h\in L^2(\domw)$ such that, up to  subsequences, we have
	$$u_n\rightharpoonup u \ \ \mbox{ and } \ \ h_n\rightharpoonup h \ \ \mbox{ in } L^2(\dom). $$
	From this, we take
	\begin{equation}\label{wc}
	\rho_{0,n}u_n\rightharpoonup \rho_0 u \ \ \mbox{ and } \ \ \rho_{\ast,n}h_n\rightharpoonup \rho_{\ast}h \ \ \mbox{ in } L^2(\dom).
	\end{equation} 
	
	Consequently, passing to limits as $n\to +\infty$, we conclude that $(u,h)$ solves (\ref{prob3}).

	Furthermore, (\ref{eq25}) follows from  (\ref{wc}). This establishes the result.

\end{proof}

\section{Main Result}

\noindent

This section is devoted to prove Theorem \ref{th_1.1}. As we have said in the introduction, our approach relies on setting an appropriate mapping  $H:E\to F$ for which we will apply Liusternik's Theorem.

\subsection{Functional Spaces}

\noindent

Consider the Hilbert spaces  $E$ and $F$  
\begin{multline*}
E:= \bigg\{ (u,h); u\in L^2(\dom), h\in L^2(\domw),
u_t,u_x,(a(x)u_x)_x, \re h\in L^2(\domw),\\
\rz u, \rz( u_t-(a(x)u_x)_x-h\chi_\omega)\in L^2(\dom)
u(\cdot,1)\equiv u(\cdot,0)\equiv 0,  u(0,\cdot)\in H_a^1 \bigg\}
\end{multline*}
and
$$F:=G\times H_a^1, \mbox{ where } G:=\left\{ g\in L^2(\dom);\ \rz g\in L^2(\dom) \right\},$$
with the norms
\begin{equation*}
\|(u,h)\|_E^2:= \intq\rz^2|u|^2  +\intw \re^2|h|^2   
\intq\rz^2|u_t-(a(x)u_x)_x-h\chi_\omega|^2  +\|u(\cdot,0)\|_{H_a^1}^2
\end{equation*}
and
\begin{equation*}
\|(g,v)\|_F^2:=\intq \rz^2g^2+\intq v^2+\intq av_x^2.
\end{equation*}

Next,  we will state a crucial result that will allow us to set the mapping $H$. Its proof is a consequence of two lemmas which will be established right below.

\begin{prop}\label{lema3.2} There exists $C>0$ such that 
	$$\intq\re^2(|u_t|^2+|(a(x)u_x)_x|^2)  \leq C\|(u,h)\|_E^2,$$
	for all $(u,h)\in E$.
\end{prop}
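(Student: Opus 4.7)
Plan. Writing $\phi := u_t - (au_x)_x - h\chi_\omega$, so that $u$ solves $u_t-(au_x)_x=\phi+h\chi_\omega$, the definition of the $E$--norm gives
\begin{equation*}
\|\rz\phi\|_{L^2(\dom)}^2 + \|\re h\|_{L^2(\domw)}^2 + \|u(\cdot,0)\|_{H_a^1}^2 \le \|(u,h)\|_E^2 .
\end{equation*}
Since $\re\le C\rz$ pointwise on $\dom$, it follows at once that $\intq \re^2 |u_t - (au_x)_x|^2 \le C\|(u,h)\|_E^2$. Expanding
\begin{equation*}
\intq \re^2 (|u_t|^2 + |(au_x)_x|^2) = \intq \re^2 |u_t-(au_x)_x|^2 + 2\intq \re^2 u_t (au_x)_x ,
\end{equation*}
reduces the proof to estimating the cross product $\intq \re^2 u_t(au_x)_x$.

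To handle this cross product I would integrate by parts twice: first in $x$, where the boundary traces vanish because $u_t$ inherits the Dirichlet condition from $u$, and then in $t$ using $u_x u_{tx}=\tfrac{1}{2}(u_x^2)_t$. The result is the identity
\begin{equation*}
2\intq \re^2 u_t (au_x)_x = -4\intq \re\re_x\, a u_x u_t + \int_0^1 \re^2(0,x) a u_x^2(0,x)\,dx - \int_0^1 \re^2(T,x) a u_x^2(T,x)\,dx + 2\intq \re\re_t a u_x^2 .
\end{equation*}
The $t=0$ boundary piece is controlled by $C\|u(\cdot,0)\|_{H_a^1}^2$ since $\re(0,\cdot)$ is bounded; the $t=T$ boundary piece is non-positive on the right-hand side of the target inequality (or is justified to vanish by density in $E$) and may be discarded. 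A direct computation from the explicit form of $A,\varsigma,\tau$ gives the pointwise bounds
\begin{equation*}
|\re_x|^2 a \le C\rc^2, \qquad |\re\re_t| \le C\rc^2 ,
\end{equation*}
the first using $(\psi')^2 a = x^2/a$ together with the bound $x^2/a\le 1/a(1)$ coming from \eqref{prop_a}. Applying Cauchy--Schwarz with a small parameter $\epsilon>0$ then controls the cross term by
\begin{equation*}
\epsilon\intq\re^2 u_t^2 + C_\epsilon\intq \rc^2 a u_x^2 + C\|u(\cdot,0)\|_{H_a^1}^2 .
\end{equation*}

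It remains to bound $\intq \rc^2 a u_x^2$, for which I would prove the auxiliary estimate
\begin{equation*}
\intq \rc^2 a u_x^2 \le C\|(u,h)\|_E^2
\end{equation*}
as one of the two lemmas announced in the paper. This is the standard weighted parabolic energy inequality, obtained by multiplying $u_t-(au_x)_x=\phi+h\chi_\omega$ by $\rc^2 u$ and integrating by parts in both $x$ and $t$; the argument uses $\rc\le C\rz$, the pointwise bound $|\rc\rc_t|\le C\rz^2$, and an absorption of the residual $x$--IBP term. Plugging this back in and absorbing $\epsilon\intq \re^2 u_t^2$ into the left-hand side of the original expansion closes the proof.

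The main obstacle will be the rigorous treatment of the $t=T$ boundary contributions in the $t$--integration by parts, because the weights $\re$ and $\rc$ blow up there; the identity $\int_0^1[\re^2 au_x^2]_0^T\,dx$ must be justified either by a density argument (approximating $(u,h)\in E$ by pairs with compact support in $[0,T)$ and passing to the limit) or by a direct verification that $\sqrt{a}\,u_x(t,\cdot)$ decays faster than $\re^{-1}$ in an averaged sense as $t\to T$, both arguments being driven by the integrability of $\rz u$ and $\rz\phi$ against the singular factor $e^{-2sA}$. A secondary, purely computational, difficulty is the verification of the weight estimates $|\re_x|^2 a,\ |\re\re_t|,\ |\rc\rc_t|$ from the explicit forms of $\tau, A, \varsigma$, which combines assumption \eqref{prop_a} with the behavior of $\tau'$ near $t=T$.
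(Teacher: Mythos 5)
Your plan is correct and would prove the proposition, but it is organized differently from the paper. The paper never forms the cross term explicitly: it states Proposition \ref{lema3.2} as a consequence of two lemmas proved for solutions of the linearized system \eqref{prob3} --- the weighted energy estimate $\intq \rc^2 a|u_x|^2 \le C\big(\intq\rz^2u^2+\intw\re^2h^2+\intq\rz^2g^2+\|u_0\|^2_{H_a^1}\big)$ obtained by testing with $\rc^2u$ (Lemma \ref{prop2.5}), and then the bounds on $\intq\re^2u_t^2$ and $\intq\re^2|(au_x)_x|^2$ obtained by two further multiplier computations, testing with $\re^2u_t$ and with $-\re^2(au_x)_x$ (Lemma \ref{prop2.6}); for $(u,h)\in E$ one takes $g:=u_t-(au_x)_x+cu-h\chi_\omega$, whose weighted norm the $E$-norm controls. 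You instead exploit the structure of $E$ directly: since $\rz\big(u_t-(au_x)_x-h\chi_\omega\big)\in L^2$ is part of the norm, the identity $|u_t|^2+|(au_x)_x|^2=|u_t-(au_x)_x|^2+2u_t(au_x)_x$ reduces the whole proposition to a single cross term, which you then treat with exactly the same ingredients the paper uses inside Lemma \ref{prop2.6}: integration by parts in $x$ and $t$, the bounds $a|(\re)_x|^2\le C\rc^2$ and $|\re(\re)_t|\le C\rc^2$ (note that on $[\alpha',\beta']$, where $\psi$ is only the $C^2$ interpolation, these are immediate since $\psi'$ and $a$ are bounded there), the favorable sign of the $t=T$ boundary term, the $t=0$ term controlled by $\|u(0,\cdot)\|_{H_a^1}$ because $\re(0,\cdot)$ is bounded, and the auxiliary estimate $\intq\rc^2au_x^2\le C\|(u,h)\|_E^2$, which is precisely the paper's Lemma \ref{prop2.5}. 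Your organization is a bit shorter (one integration-by-parts identity instead of two multiplier estimates, and no separate bookkeeping of the $g$, $h\chi_\omega$ and $cu$ contributions in each of them) and it works for arbitrary $(u,h)\in E$ without passing through the linear system; the paper's formulation keeps each computation elementary and records the dependence on $g$, $h$, $u_0$ separately in a form tailored to the controlled problem \eqref{prob3}. Both treatments share the same loose end, which you at least flag explicitly: the discarded $t=T$ boundary term and the absorption of $\epsilon\intq\re^2u_t^2$ should be carried out on $(0,T-\delta)$, where all weighted quantities are finite, letting $\delta\to0$ at the end by monotone convergence.
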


\begin{lem}\label{prop2.5}
	Assume the hypothesis of Theorem \ref{prop2.4}. Then
	\begin{equation*}
	\intq \rc^2a(x)|u_x|^2  
	\leq C\left(\intq\rz^2|u|^2  +\intw\re^2|h|^2  +\intq\rz^2g^2   
	+\|u_0\|_{H^1_a}^2\right)
	\end{equation*}
	
	\begin{proof}
		
		Multiplying the PDE in (\ref{prob3}) by $\rc^2u$, integrating in $[0,1]$ and using the two relations
		
		$$ \frac{1}{2}\frac{d}{dt}\into\rc^2u^2=\int_0^1\rc^2u_tu+\into\rc\rc_tu^2$$
		and
		\begin{align*}
		\into\rc^2(au_x)_xu\    & =-2\into \rc\rc_x auu_x- \into\rc^2au_x^2,
		\end{align*}
		we obtain
		\begin{align}\label{myeq2}
		\frac{1}{2}\frac{d}{dt}\into\rc^2u^2 \   +\into\rc^2au_x^2 & =- \into\rc^2cu^2+\into\rc^2uh\chi_\omega +\into\rc^2 gu+\into\rc\rc_tu^2-2\into\rc\rc_xauu_x\nonumber\\
		& = I_1+I_2+I_3+I_4+I_5.
		\end{align}

		Now, using $\re\leq C\rc\leq C\rz\leq C\rho$ and $\re\rz=\rc^2$, we obtain
		\begin{align*}
		& I_1\leq C \into\rz^2|u|^2  ,\\
		& I_2\leq C\left(\frac{1}{2}\int_0^1\re^2|h\chi_\omega|^2\   +\frac{1}{2}\int_0^1\rz^2|u|^2\   \right),\\
		& I_3\leq C\left(\frac{1}{2}\int_0^1\rz^2|g|^2\   +\frac{1}{2}\int_0^1\rz^2|u|^2\   \right).
		\end{align*}

		Let us estimate $I_4$. First, we will rewrite $A$ as 
		$A(t,x)=\varsigma(t,x) \bar{\eta}(x)$, where  $\bar{\eta}(x):=(e^{\lambda(|\psi|_\infty+\psi)}-e^{2\lambda|\psi|_\infty})/\eta(x)$. Second, note that
		$$\rc\rc_t=-se^{-2sA}\bar{\eta}(x)\varsigma^{-4}\varsigma_t-2e^{-2sA}\varsigma^{-5}\varsigma_t.$$
		
		Then, for all $t\in [0,T]$,
		\begin{align*}
		|\rc\rc_t| \leq C\rz^2|\varsigma^{-2}+\varsigma^{-3}|\left|\varsigma_t\right| \leq C\rz^2,
		\end{align*}
		whence
		$$I_4\leq C\into\rz^2|u|^2  .$$
		
		Now, using
		\begin{align*}
		\rc^2_xau^2 &\leq Ce^{-2sA}\varsigma^{-2}\left|\varsigma^{-2}+\varsigma^{-4}\right| |\varsigma_x^2|au^2\leq c\rz^2u^2,
		\end{align*}
		we obtain
		\begin{equation*}
		I_5 \leq 2\into|\rc\sqrt{a}u_x||\rc_x\sqrt{a}u| \leq \frac{1}{2}\into\rc^2au_x^2+2\into \rc_x^2au^2  \leq \frac{1}{2}\into\rc^2au_x^2+C\into \rz^2u^2.
		\end{equation*}
		Hence, (\ref{myeq2}) gives us
		$$\frac{d}{dt}\into\rc^2|u|^2  +\into\rc^2a|u_x|^2  \leq C\left(\into\rz^2|u|^2  \into\re^2|h\chi_\omega|^2   \into\rz^2|g|^2  \right).$$ 
		Integrating in time, the result follows.

	\end{proof}
	
\end{lem}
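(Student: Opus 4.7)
The approach is a weighted energy estimate tailored to the weight $\rc^2$. Multiply the state equation $u_t - (au_x)_x + cu = h\chi_\omega + g$ by $\rc^2 u$ and integrate in $x\in(0,1)$. The time derivative gives $\frac{1}{2}\frac{d}{dt}\into\rc^2 u^2\,dx - \into\rc\rc_t u^2\,dx$, while integration by parts on the diffusion term (the boundary terms vanish by the Dirichlet conditions $u(t,0)=u(t,1)=0$) produces $\into\rc^2 a u_x^2\,dx + 2\into\rc\rc_x a u u_x\,dx$. Rearranging yields a pointwise-in-$t$ identity with $\frac{1}{2}\frac{d}{dt}\into\rc^2 u^2 + \into\rc^2 a u_x^2$ on its left-hand side and five error terms coming from $c$, $h\chi_\omega$, $g$, $\rc\rc_t$, and the cross term $\rc\rc_x a u u_x$.

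Each of these five error terms is handled routinely using the weight relations $\re\leq C\rc\leq C\rz$ and $\rc^2 = \re\rz$. The zeroth-order term is bounded by $C\intq\rz^2 u^2$ using $c\in L^\infty$. For the source and control terms, Young's inequality with the factorizations $\rc^2\leq C\rz^2$ and $\rc^2 = \re\rz$ yields contributions bounded by $\intq\rz^2 u^2 + \intq\rz^2 g^2 + \intw\re^2|h|^2$. The cross term $2\into\rc\rc_x a u u_x$ is split as $(\rc\sqrt{a}\,u_x)(\rc_x\sqrt{a}\,u)$ and estimated via Young's inequality with a small parameter, absorbing $\tfrac12\into\rc^2 a u_x^2$ into the left-hand side and producing a residual $\into\rc_x^2 a u^2$ to be controlled. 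Integrating in $t$ over $[0,T]$ then completes the estimate: the boundary term at $t=T$ is non-negative and can be dropped, while at $t=0$ we get $\rc^2(0,\cdot)\|u_0\|_{L^2}^2\leq C\|u_0\|_{H_a^1}^2$ since the modification of $m$ ensures $m(0)>0$, hence $\rc(0,\cdot)$ is bounded.

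The main technical obstacle is verifying the pointwise weight bounds $|\rc\rc_t|\leq C\rz^2$ and $\rc_x^2 a\leq C\rz^2$. Writing $\rc=e^{-sA}\varsigma^{-2}$ and using that $|\tau'|\leq C\tau^2$ (a consequence of the choice of $m$), differentiating in $t$ yields $|\rc\rc_t| \lesssim e^{-2sA}\varsigma^{-4}\cdot \varsigma^2 = e^{-2sA}\varsigma^{-2} = \rz^2$ up to a constant. For the spatial derivative, differentiating brings down factors of $\varsigma_x$ and $A_x$, both of order $\lambda\psi_x\varsigma$, so that $\rc_x^2 a \lesssim \rz^2\cdot (x^2/a)$; the latter quotient is bounded on $[0,1]$ by the remark following assumption \eqref{hyp_a}, which guarantees via \eqref{prop_a} that $x\mapsto x^2/a(x)$ is nondecreasing and hence uniformly bounded. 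Once these weight inequalities are in place, the rest of the argument is a direct computation.
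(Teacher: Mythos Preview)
Your proposal is correct and follows essentially the same route as the paper: multiply by $\rc^2 u$, integrate in space, and control the five resulting terms via the weight hierarchy $\re\leq C\rc\leq C\rz$, the factorization $\rc^2=\re\rz$, and the pointwise bounds $|\rc\rc_t|\leq C\rz^2$ and $\rc_x^2 a\leq C\rz^2$. One small remark: in your justification of $\rc_x^2 a\lesssim \rz^2\cdot(x^2/a)$, the identification $(\psi')^2 a = x^2/a$ only holds on $[0,\alpha')\cup[\beta',1]$, while on $\omega'$ one simply uses that $\psi'$ and $a$ are bounded there; this does not affect the conclusion.
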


\begin{lem}\label{prop2.6}
	Assume the hypothesis of Proposition \ref{prop2.4} and suppose that $h$ and $u$ satisfy (\ref{prob3}) and (\ref{eq25}). Then
	\begin{equation*}
	\intq\re^2u_t^2   +\intq\re^2|(a(x)u_x)_x|^2  
	\leq C\left(\intq\rz^2u^2  +\intw\re^2h^2  +\intq\rz^2g^2   
	+\|u_0\|_{H_a^1}^2\right).
	\end{equation*}
\end{lem}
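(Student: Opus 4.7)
The plan is to mimic the weighted energy argument of Lemma~\ref{prop2.5}, but this time multiplying the PDE in (\ref{prob3}) by $\re^2 u_t$ instead of $\rc^2 u$, and then to recover $(au_x)_x$ pointwise from the equation. The first multiplication controls $\intq \re^2 u_t^2$, while the second observation feeds this into $\intq \re^2 |(au_x)_x|^2$.

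Carrying out the first step, I multiply $u_t - (au_x)_x + cu = h\chi_\omega + g$ by $\re^2 u_t$ and integrate over $(0,1)$. Integration by parts in $x$ creates no boundary terms, thanks to $u_t(t,0)=u_t(t,1)=0$ (which follows from $u(t,0)=u(t,1)=0$) and to the degeneracy $a(0)=0$, and the cross term can be rewritten as
\begin{align*}
-\int_0^1 \re^2 u_t (au_x)_x \,dx = \tfrac{1}{2}\tfrac{d}{dt}\int_0^1 \re^2 au_x^2 \,dx - \int_0^1 \re\re_t au_x^2 \,dx + 2\int_0^1 \re\re_x au_x u_t \,dx.
\end{align*}
I thus obtain the identity
\begin{align*}
\int_0^1 \re^2 u_t^2 + \tfrac{1}{2}\tfrac{d}{dt}\int_0^1 \re^2 au_x^2 = \int_0^1 \re\re_t au_x^2 - 2\int_0^1\re\re_x au_x u_t - \int_0^1 \re^2 cuu_t + \int_0^1 \re^2(h\chi_\omega + g)u_t.
\end{align*}

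Next I estimate the right-hand side by Young's inequality, using the pointwise weight bounds
$$|\re\re_t| \leq C\rc^2, \qquad |\re_x|^2 a \leq C\rc^2, \qquad \re \leq C\rz,$$
which follow from differentiating $\re = e^{-sA}\varsigma^{-3}$ and invoking $|\tau_t|\leq C\tau^{5/4}$, $\tau\geq \tau(0)>0$, the inequality $x^2/a(x)\leq 1/a(1)$ of Remark~2.2, and the explicit form of $\psi$. Absorbing an $\varepsilon$-fraction of $\int_0^1\re^2 u_t^2$ on the left and integrating in $t\in(0,T)$ gives
$$\intq \re^2 u_t^2 \leq C\left(\intq \rc^2 au_x^2 + \intq\rz^2 u^2 + \intw \re^2 h^2 + \intq \rz^2 g^2 + \int_0^1 \re^2(0,x) au_x^2(0,x)\,dx\right).$$
The boundary term at $t=T$ vanishes since $\re(T,\cdot)\equiv 0$, while the one at $t=0$ is bounded by $C\|u_0\|_{H_a^1}^2$ because the condition $m(0)>0$ forces $\re(0,\cdot)$ to be bounded. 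Applying Lemma~\ref{prop2.5} now absorbs $\intq\rc^2 au_x^2$ into the right-hand side of the desired estimate, closing the bound on $\intq \re^2 u_t^2$. For the bound on $\intq \re^2|(au_x)_x|^2$, I use the PDE in (\ref{prob3}) to write $(au_x)_x = u_t + cu - h\chi_\omega - g$, so
$$\re^2|(au_x)_x|^2 \leq 4\bigl(\re^2 u_t^2 + \re^2 c^2 u^2 + \re^2 h^2\chi_\omega + \re^2 g^2\bigr) \leq C\bigl(\re^2 u_t^2 + \rz^2 u^2 + \re^2 h^2\chi_\omega + \rz^2 g^2\bigr);$$
integration in $(t,x)$ together with the already established bounds finishes the argument.

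The main obstacle is verifying the weight identities $|\re\re_t|\leq C\rc^2$ and $|\re_x|^2 a \leq C\rc^2$: they are analogous in spirit to those appearing in the proof of Lemma~\ref{prop2.5}, but slightly tighter, and rely on a careful combination of (i) the exponential structure of $\eta$ and $\bar\eta$, (ii) the assumption $xa'(x)\leq Ka(x)$ through $x^2/a(x)\leq 1/a(1)$, and (iii) the asymmetric time cutoff $m(t)$, which keeps $\tau(t)$ bounded away from zero on $[0,T]$ while still behaving like $[t(T-t)]^{-4}$ near $t=T$. It is precisely this asymmetry that makes the boundary contribution at $t=0$ controllable by $\|u_0\|_{H_a^1}^2$ rather than by some stronger norm.
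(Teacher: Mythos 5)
Your first half coincides with the paper's proof: the same multiplier $\re^2u_t$, the same integration by parts producing $-2\int_0^1\re\re_x a u_xu_t$, $-\tfrac12\tfrac{d}{dt}\int_0^1\re^2au_x^2$ and $\int_0^1\re\re_t au_x^2$, the same weight bounds $|\re\re_t|\le C\rc^2$, $\re_x^2a\le C\rc^2$ (via $x^2/a\le 1/a(1)$ and the form of $\psi$), and the same appeal to Lemma \ref{prop2.5} to absorb $\intq\rc^2au_x^2$. Your second half differs from the paper: instead of a second energy estimate with the multiplier $-\re^2(au_x)_x$ (which again produces the term $I_4$ and requires Lemma \ref{prop2.5} once more), you simply read $(au_x)_x=u_t+cu-h\chi_\omega-g$ pointwise from the equation and square. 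Since $u_0\in H_a^1$ gives $u\in H^1(0,T;L^2)\cap L^2(0,T;H_a^2)$, the PDE holds a.e.\ and your shortcut is legitimate; it is in fact more elementary than the paper's argument and avoids the repeated treatment of $I_4$.

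One point needs repair: your claim that the time-boundary term at $t=T$ ``vanishes since $\re(T,\cdot)\equiv 0$'' is false. Since $A=\tau(t)\left(e^{\lambda(|\psi|_\infty+\psi)}-e^{3\lambda|\psi|_\infty}\right)<0$ is bounded away from $0$ in $x$ and $\tau\to+\infty$ as $t\to T$, the factor $e^{-sA}$ grows exponentially while $\varsigma^{-3}$ decays only polynomially, so $\re$ blows up at $t=T$ (this blow-up is precisely what forces $u(T)\equiv0$ in the paper). The estimate survives because the term $\tfrac12\int_0^1\re^2(t)au_x^2(t)\,dx$ sits on the left-hand side with a favorable sign: integrate on $(0,T-\delta)$, drop the nonnegative contribution at $t=T-\delta$, and let $\delta\to0$ by monotone convergence. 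With that correction, and keeping the bound $\int_0^1\re^2(0,x)a|u_{0,x}|^2\,dx\le C\|u_0\|_{H_a^1}^2$ (valid because $m(0)>0$ makes $\re(0,\cdot)$ bounded), your argument is complete.
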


\begin{proof}
	In the first step,  we will estimate the first term of left side of the inequality. Multiplying the PDE in (\ref{prob3}) by $\re^2u_t$ and integrating in $[0,1]$ we have
	
	\begin{align}\label{myeq3}
	\int_0^1\re^2u_t^2 &=  \int_0^1\re^2u_th\chi_\omega   +\into\re^2g u_t -\into c(t,x)\re^2uu_t  +\into\re^2(au_x)_xu_t\   
	\nonumber \\
	& =I_1+I_2-I_3+I_4.
	\end{align}
	
	Using Young's inequality with $\ep$ and $\re\leq C\rc\leq C\rz\leq C\rho$ we obtain
	\begin{align*}
	I_1  &\leq \into \re^2|h\chi_\omega||u_t|\   \leq \ep\into\re^2|u_t|^2  +\frac{1}{4\ep}\into\re^2|h\chi_\omega|^2,\\
	I_2 & \leq \into\re^2|gu_t| \leq \ep\into\re^2|u_t|^2  +\frac{1}{4\ep}\into\re^2|g|^2  \leq \ep\into\re^2|u_t|^2  +\frac{C}{4\ep}\into\rz^2|g|^2 
	\end{align*}
	and
	\begin{align*}
	-I_3\leq \into |c(t,x)|\re^2|uu_t|   & \leq  C\left(\ep\into\re^2u_t^2  +\frac{1}{4\ep}\into\rz^2u^2   \right).
	\end{align*}
	
	Now, integrating $I_4$ by parts, we can see that
	\begin{align}\label{myeq37} 
	I_4& =\left.\re^2au_xu_t\right\vert_{x=0}^{x=1} -\into(\re^2u_t)_xau_x =-2\into\re(\re)_xau_tu_x-\frac{1}{2}\frac{d}{dt}\into\re^2a u_x^2 +\frac{1}{2}\into(\re^2)_tau_x^2\nonumber\\
	& =-2I_{41}-\frac{1}{2}\frac{d}{dt}\into\re^2a u_x^2\   +\frac{1}{2} I_{42}.
	\end{align}
	
	Hence,
	\begin{equation}\label{myeq4}
	\int_0^1\re^2|u_t|^2\   +\frac{1}{2}\frac{d}{dt}\into\re^2a |u_x|^2\    =I_1+I_2-I_3-2I_{41}+\frac{1}{2}I_{42}.
	\end{equation}
	Since 
	$$|\re(\re)_xau_xu_t|\leq C|\re u_t||\rc\sqrt{a}u_x|$$
	and
	$$|(\re^2)_t|= 2|\re(\re)_t|\leq C\rc^2,$$
	we have
	$$I_{41}\leq \frac{1}{4}\into\re^2u_t^2+C\into\rc^2au_x^2$$
	and 
	\begin{equation*}\label{myeq8}
	I_{42} \leq C\into\rc^2au_x^2  .
	\end{equation*}
	For the estimates of $I_1,I_2,I_3, I_{41}$ and $I_{42}$ in the (\ref{myeq4}) we obtain
	
	\begin{equation*}
	\int_0^1\re^2u_t^2\   +\frac{1}{2}\frac{d}{dt}\into\re^2a u_x^2\      \leq C\left(\into\re^2|h\chi_\omega|^2  +\into\rz^2g^2 \right.
	\left.+\into \rz^2u^2  +\into\rc^2au_x^2  \right),
	\end{equation*}
	which implies
	\begin{equation}\label{myeq9}
	\intq\re^2u_t^2   \leq C\left(\intq\rz^2u^2  +\intw\re^2h^2  \right. 	\left.+\intq\rz^2g^2   
	+\|u_0\|_{H_a^1}^2\right).
	\end{equation}

	In the second part, we must estimate the term $\intq\re^2|(au_x)_x|^2  $. Multiplying the PDE in (\ref{prob3}) by $-\re^2(au_x)_x$ and integrating in $[0,1]$, we take
	\begin{align*} 
	\into\re^2|(au_x)_x|^2   & =-\into\re^2h\chi_\omega(au_x)_x  - \into\re^2g(au_x)_x  +\into c(t,x)\re^2u(au_x)_x  +\into\re^2u_t(au_x)_x  \nonumber\\
	& =-J_1-J_2+J_3+I_4.
	\end{align*}
	
	Again, applying Young's inequality with $\ep$, we obtain
	$$J_1\leq \into\re^2|h\chi_\omega||(au_x)_x|  \leq \ep\into\re^2|(au_x)_x|^2  +\frac{1}{4\ep}\into\re^2|h\chi_\omega|^2  .$$
	$$J_2\leq \into\re^2|g||(au_x)_x|  \leq \ep\into\re^2|(au_x)_x|^2  +\frac{1}{4\ep}\into\rz^2g^2  .$$
	\begin{align*}
	J_3 & \leq C\left(\ep\into\re^2|(au_x)_x|^2  +\frac{1}{4\ep}\into\rz^2u^2  .\right)
	\end{align*}
	
	Recalling the  identities \eqref{myeq37} and \eqref{myeq9}, we have
	\begin{equation*}
	\into\re^2|(au_x)_x|^2  +\frac{1}{2}\frac{d}{dt}\into\re^2a|u_x|^2  \leq C\left(\into\re^2|h\chi_\omega|^2  +\into\rz^2|k|^2  \right.
	\left.+\into\rz^2|u|^2  +\into\rc^2a|u_x|^2  \right)
	\end{equation*}
	Integrating in time and recalling  Lemma (\ref{prop2.5}), we conclude the proof.
\end{proof}

In order to establish the last result of this section, we need to prove a technical lemma.


\begin{lem}\label{lema3.1} 
	Let $\beta(x)=e^{\lambda(|\psi|_\infty+\psi)}-e^{3\lambda|\psi|_\infty}$ and $\bar{\beta}=\dps\max_{x\in [0,1]}\beta(x)$. 
	There exists $s> 0$  such that if  $\dps s\bar{\beta}<M<0$, then
	$$\sup_{t\in[0,T]}\left\{e^{-2M/m(t)} \left(\int_0^1u\   \right)^2\right\}\leq C\|(u,h)\|_E^2,$$
	for all $(u,h)\in E$.
\end{lem}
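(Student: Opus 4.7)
Let $\phi(t):=e^{-M/m(t)}$, $y(t):=\int_0^1 u(t,x)\,dx$, and $z(t):=\|u(t)\|_{L^2(0,1)}^2$. The desired inequality is $\sup_t \phi^2 y^2\le C\|(u,h)\|_E^2$, and by Cauchy--Schwarz $y^2\le z$, so it suffices to prove $\sup_t \phi^2 z\le C\|(u,h)\|_E^2$. Set $g:=u_t-(au_x)_x-h\chi_\omega$; by the very definition of $E$, $\intq \rho_0^2 g^2\le \|(u,h)\|_E^2$. Since $u_x,(au_x)_x\in L^2(\dom)$, for a.e.\ $t$ we have $au_x(t,\cdot)\in H^1(0,1)\hookrightarrow C([0,1])$, so the Dirichlet conditions $u(t,0)=u(t,1)=0$ kill the boundary term $[u\,au_x]_0^1$ when we multiply $u_t=(au_x)_x+h\chi_\omega+g$ by $u$ and integrate over $(0,1)$, giving
\begin{equation*}
\tfrac12 z'(t)+\int_0^1 a u_x^2\,dx=\int_\omega u h\,dx+\int_0^1 u g\,dx.
\end{equation*}
Multiplying by $\phi^2$, discarding the nonpositive dissipation, and integrating in time yields, for every $t\in[0,T]$,
\begin{equation*}
\phi^2(t)z(t)\le \phi^2(0)z(0)+2\int_0^t |\phi\phi'|\,z\,ds+2\int_0^t\!\!\int_\omega \phi^2|uh|+2\int_0^t\!\!\int_0^1 \phi^2|ug|.
\end{equation*}

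The heart of the proof is a trio of pointwise weight comparisons valid on $(0,T)\times(0,1)$ under the hypothesis $s\bar\beta<M<0$, with $c:=M-s\bar\beta>0$:
\begin{equation*}
\phi(t)\le C\rho_0(t,x),\qquad \phi(t)\le C\rho_\ast(t,x),\qquad |\phi(t)\phi'(t)|\le C\rho_0(t,x)^2.
\end{equation*}
Writing $\rho_0/\phi=e^{(M-s\beta(x))/m(t)}/(\tau(t)\eta(x))$ and using $\beta(x)\le\bar\beta$ gives $M-s\beta(x)\ge c$, whence $\rho_0/\phi\ge C_1 m\,e^{c/m}$; the function $m\mapsto m e^{c/m}$ has minimum $ec$ at $m=c$, so the ratio is bounded below. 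Replacing $\varsigma$ by $\varsigma^3$ one gets $\rho_\ast/\phi\ge C_2 m^3 e^{c/m}$, again bounded below. For $\phi\phi'=\phi^2 M m'/m^2$, the formula $m(t)=t^4(T-t)^4$ on $[T/2,T]$ yields the sharp bound $|m'|/m^2\le 4T/m^{5/4}$, so $|\phi\phi'|/\rho_0^2$ is controlled by a constant multiple of $(m^{13/4} e^{2c/m})^{-1}$, whose maximum in $m>0$ is attained at $m=8c/13$ and is therefore finite; on $[0,T/2]$ the comparison is trivial since $\phi,\phi'$ are continuous and bounded while $\rho_0$ is bounded below.

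Inserting these comparisons and applying $|uh|\le\tfrac12(u^2+h^2)$ and $|ug|\le\tfrac12(u^2+g^2)$, each of the three integrals on the right-hand side is majorized by a constant times $\intq\rho_0^2 u^2+\intw\rho_\ast^2 h^2+\intq\rho_0^2 g^2\le \|(u,h)\|_E^2$; the initial term satisfies $\phi^2(0)z(0)\le C\|u_0\|_{H_a^1}^2$ because $m(0)>0$ keeps $\phi(0)$ finite and $u(0,\cdot)\in H_a^1$ with norm controlled by $\|(u,h)\|_E$. Taking the supremum in $t$ completes the proof. I expect the main technical obstacle to be the third weight comparison: near $t=T$ the polynomial blow-up $1/m^{5/4}$ of $|\phi\phi'|/\phi^2$ must be absorbed by the exponential factor $e^{2c/m}$ hidden inside $\rho_0^2/\phi^2$, and this absorption is precisely what the strict inequality $s\bar\beta<M$ is designed to guarantee.
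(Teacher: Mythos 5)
Your proposal is correct, but it takes a genuinely different route from the paper's. The paper works directly with the scalar function $q(t)=e^{-M/m(t)}\int_0^1 u\,dx$: it proves the single weight comparison $e^{-2M/m(t)}\le C\rho_\ast^2$ (for $s$ large, via ``exponential beats polynomial''), bounds $\|q\|_{H^1(0,T)}$ using that comparison together with the estimates $\int_0^T\!\!\int_0^1\rho_0^2u^2$ and $\int_0^T\!\!\int_0^1\rho_\ast^2u_t^2\le C\|(u,h)\|_E^2$ from Proposition \ref{lema3.2}, and concludes with the embedding $H^1(0,T)\hookrightarrow C([0,T])$. You instead reduce to $\sup_t\phi^2\|u(t)\|_{L^2}^2$ by Cauchy--Schwarz, run a weighted energy identity on the residual equation $u_t=(au_x)_x+h\chi_\omega+g$, and absorb all terms through the three pointwise comparisons $\phi\le C\rho_0$, $\phi\le C\rho_\ast$, $|\phi\phi'|\le C\rho_0^2$, which you verify for \emph{every} pair with $s\bar\beta<M<0$ (your computations, including $|m'|\le 4Tm^{3/4}$ on $[T/2,T]$ and the minima of $m\,e^{c/m}$, $m^{3}e^{c/m}$, are correct; the paper's Claim 1, by contrast, asks for $s$ large and its bookkeeping $e^{-k/x}\le Cx^3$ versus $\rho_\ast^2\sim e^{-2sA}m^6$ is slightly loose, though harmless). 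What your route buys: no Sobolev embedding, no largeness condition on $s$, and a transparent role for the strict inequality $s\bar\beta<M$. What it costs: the energy identity must be justified, i.e.\ you need $u_t,(au_x)_x\in L^2((0,T)\times(0,1))$ globally and $\sqrt{a}\,u_x\in L^2((0,T)\times(0,1))$ so that the integration by parts and the vanishing of the boundary term are legitimate; these facts do \emph{not} follow from the literal definition of $E$ (which only posits them on $\omega$), but they do follow from Proposition \ref{lema3.2} combined with $\rho_\ast\ge c_0>0$, and from Lemma \ref{prop2.5} (or the well-posedness Proposition applied to $u$ viewed as the solution of \eqref{prob3} with source $g+$ lower-order terms). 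Since the paper's own proof also invokes Proposition \ref{lema3.2}, this is the same prerequisite deployed differently; once you cite it explicitly at that step, your argument is complete.
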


\begin{proof}
	Firstly, consider  $(u,h)\in E$ and  $q:[0,T]\to \R$ given by
	$$q(t):=e^{-M/m(t)}\int_0^1u(t,x).$$
	
	\noindent \textbf{Claim 1:} There exists $s>0$ and $C>0$ such that $e^{-2M/m(t)}\leq C\re^2 $.

	Indeed, take $k>0$ and $C>0$ satisfying $e^{-k/x}\leq Cx^{3}$ for all $x> 0$. In this case,  we have 
	$$e^{-k/m(t)}\leq Cm^{3}(t),\ \mbox{ for all }\ t\in [0,T].$$
	
	Now, taking $s>0$ such that $2s(\bar{\beta}-\beta)>k$, we obtain
	$$e^{-2M/m(t)+2sA}=e^{(-2M+2s\beta)/m(t)}<e^{-k/m(t)}<Cm^3(t),\ \mbox{ for all } t\in [0,T],$$
	whence we deduce that
	$$e^{-2M/m(t)}\leq Ce^{-2sA}\tau^{-3}\leq C\re^2 ,\ \mbox{ for all } t\in [0,T].$$
	
	\noindent \textbf{Claim 2:} $\|q\|_{H^1(0,T)}\leq C\|(u,h)\|_E$.

	In fact, since $\re^2\leq C\rz^2$, Claim 1  and Proposition \ref{lema3.2} imply
	$$\|q\|^2_{L^2(0,T)} \leq\intq e^{-2M/m(t)} u^2\   \leq C\intq \re^2u^2  \leq C\|(u,h)\|_E^2 $$
	and
	\begin{align*}
	\|q'\|^2_{L^2(0,T)}  &= \int_0^T \left|\frac{Mm'(t)}{m^2(t)}q(t)+e^{-M/m(t)}\into u_t\right|^2 \leq C\int_0^T\frac{M^2}{m^4(t)}|q(t)|^2+C\intq e^{-2M/m(t)}u_t^2  \\
	& \leq C\intq\frac{M^2e^{-2M/m(t)}}{m^4(t)}u^2  +C\intq e^{-2M/m(t)}u_t^2  \leq C\intq \rz^2|u|^2  +C\intq\re^2|u_t|^2\\
	& \leq C\|(u,h)\|_E^2.
	\end{align*}
	This ends the proof of Claim 2.
	
	As a conclusion, the proof comes from Claim 2 and the continuous embedding $H^1(0,T)\hookrightarrow C(0,T)$.
\end{proof}

\begin{prop}\label{prop3.1}
	There exists $C>0$ such that
	$$\intq \rz^2\left(\int_0^1\bu\ \right)^2|(au_x)_x|^2\leq C\|(u,h)\|_E^2\|(\bu,\bh)\|_E^2,$$	
	for any $(u,h), (\bu,\bh)\in E$.
\end{prop}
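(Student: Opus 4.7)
The plan is to decouple the nonlocal factor $\int_0^1 \bar u$, which depends only on $t$, from the space integral of $|(au_x)_x|^2$. The previous two results are perfectly matched to this: Lemma \ref{lema3.1} bounds the squared integral $(\int_0^1 \bar u)^2$ weighted by $e^{-2M/m(t)}$ in terms of $\|(\bar u,\bar h)\|_E^2$, while Proposition \ref{lema3.2} bounds $\intq \re^2|(au_x)_x|^2$ in terms of $\|(u,h)\|_E^2$. The job is therefore to write the pointwise identity
\[
\rz^2\Bigl(\int_0^1\bar u\,dx\Bigr)^2 = \Bigl[e^{-2M/m(t)}\Bigl(\int_0^1\bar u\,dx\Bigr)^2\Bigr]\cdot \rz^2\, e^{2M/m(t)}
\]
and then to check that the remaining multiplier $\rz^2 e^{2M/m(t)}$ is pointwise dominated by a constant multiple of $\re^2$.

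Since $\rz^2=e^{-2sA}\varsigma^{-2}$ and $\re^2=e^{-2sA}\varsigma^{-6}$, the desired comparison reduces to the purely time-dependent inequality $\varsigma^{4}e^{2M/m(t)}\le C$. Writing $\varsigma=\eta(x)/m(t)$ and using that $\eta$ is bounded on $[0,1]$, it suffices to show $e^{2M/m(t)}/m(t)^4\le C$ uniformly on $[0,T]$. Away from $t=T$, $m$ is bounded below by a positive constant and the bound is immediate; near $t=T$, we have $m(t)\to 0^+$, and because $M<0$ the factor $e^{2M/m(t)}$ decays exponentially in $1/m(t)$, which easily beats the polynomial blow-up of $m(t)^{-4}$. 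This is the only slightly delicate step, and it is really just the standard observation that super-exponential decay dominates polynomial growth. (The constant $M\in(s\bar\beta,0)$ is chosen as in Lemma \ref{lema3.1}, enlarging $s$ if necessary so that the same $s$ also yields the weights used in the definition of $E$.)

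With this pointwise bound in hand, I apply Lemma \ref{lema3.1} to $\bar u$ to get
\[
\sup_{t\in[0,T]}e^{-2M/m(t)}\Bigl(\int_0^1\bar u\,dx\Bigr)^2\le C\|(\bar u,\bar h)\|_E^2,
\]
extract that factor from the time–space integral, use the pointwise estimate $\rz^2 e^{2M/m(t)}\le C\re^2$ inside the remaining integral, and finally apply Proposition \ref{lema3.2} to $(u,h)$. Concretely,
\[
\intq \rz^2\Bigl(\int_0^1\bar u\,dx\Bigr)^2|(au_x)_x|^2\,dx\,dt \le C\|(\bar u,\bar h)\|_E^2 \intq \re^2|(au_x)_x|^2\,dx\,dt \le C\|(\bar u,\bar h)\|_E^2\|(u,h)\|_E^2,
\]
which is the claim. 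The only real obstacle is the weight comparison $\rz^2 e^{2M/m(t)}\le C\re^2$; once this is established, the estimate follows by simply gluing the two previous lemmas together.
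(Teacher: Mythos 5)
Your proposal is correct and takes essentially the same route as the paper: the paper's proof rests on the same pointwise comparison $\tau^4\le C e^{-2M/m(t)}$ (your $\varsigma^4 e^{2M/m(t)}\le C$, using boundedness of $\eta$), then factors out $\sup_{t\in[0,T]}e^{-2M/m(t)}\left(\into\bu\right)^2$ via Lemma \ref{lema3.1} and bounds $\intq\re^2|(au_x)_x|^2$ via Proposition \ref{lema3.2}, exactly as you do.
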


\begin{proof} In fact, since
	$\tau^4\leq\frac{3}{-2M^4} e^{-2M/m(t)}$, applying Proposition \ref{lema3.2} and Lemma \ref{lema3.1} we have
	\begin{align*}
	& \intq \rz^2\left(\int_0^1u\   \right)^2\left|\left(au_x\right)_x\right|^2   \leq C\intq \tau^4\left(\int_0^1\bu\   \right)^2\re^2\left|\left(au_x\right)_x\right|^2   \\
	& \leq C\dps\sup_{t\in[0,T]}\left\{e^{-2M/m(t)}\left(\int_0^1\bu\   \right)^2\right\}\intq \re^2\left|\left(au_x\right)_x\right|^2   \leq C\|(u,h)\|_E^2\|(\bu,\bh)\|_E^2.
	\end{align*}
\end{proof}

\subsection{Local Null Controllability  for the nonlinear system}

\noindent

Consider the mapping $H:E\to F$, defined by
\begin{equation}
H(u,h) :=(H_1(u,h),H_2(u,h)):= \left(u_t-\left(b\left(x,\int_0^1u\   \right)u_x\right)_x+f(t,x,u)-h\chi_\omega, u(\cdot,0)\right).\label{H}
\end{equation}

Our goal is to prove that $H$ verifies the hypothesis of the following version of Liusternik's Theorem.

\begin{thm}[Liusternik]
	Let $E$ and $F$ be two Banach spaces, $H: E\to F$ a $C^1$ mapping and $\eta_0=H(0)$. If $H'(0):E\to F$ is onto, then there exist $\ep>0$ and $\tilde{H}:B_\ep(\eta_0)\subset F\to E$ such that
	$$H(\tilde{H}(\xi))=\xi,\ \forall \xi\in B_\ep(\eta_0),$$
	that is, $\tilde{H}$ is a right inverse of $H$.
\end{thm}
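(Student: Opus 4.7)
The plan is to recast $H(u)=\xi$ as a fixed-point equation and apply Banach's contraction principle. Writing the first-order Taylor expansion
$$H(u)=\eta_0+H'(0)u+r(u),$$
the remainder $r:E\to F$ is $C^1$ with $r(0)=0$ and $r'(0)=0$, so that given any $\mu>0$, one finds $\delta>0$ with
$$\|r(u_1)-r(u_2)\|_F\le\mu\,\|u_1-u_2\|_E,\qquad\forall\,u_1,u_2\in\overline{B_E(0,\delta)}.$$
This follows from $H\in C^1$ via the mean value inequality applied to the map $u\mapsto H(u)-H'(0)u$.

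Because $H'(0):E\to F$ is a bounded surjective operator between Banach spaces, the open mapping theorem provides a bounded right inverse; in the Hilbert-space framework in which the theorem is later applied, one may simply take the continuous linear operator $R:F\to E$ defined as the inverse of $H'(0)$ restricted to $(\ker H'(0))^\perp$, so that $H'(0)\circ R=\mathrm{Id}_F$ and $\|R\|_{\mathcal{L}(F,E)}<\infty$. The equation $H(u)=\xi$ then becomes equivalent to
$$u=R\bigl(\xi-\eta_0-r(u)\bigr)=:T_\xi(u),$$
because once $u$ satisfies this identity, applying $H'(0)$ on both sides yields $H'(0)u=\xi-\eta_0-r(u)$, which is exactly $H(u)=\xi$.

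I would then fix $\mu:=(2\|R\|)^{-1}$, choose the corresponding $\delta>0$, and set $\ep:=\delta/(2\|R\|)$. A direct estimate shows that, whenever $\|\xi-\eta_0\|_F\le\ep$, the mapping $T_\xi$ sends the closed ball $\overline{B_E(0,\delta)}$ into itself and is a $\tfrac12$-contraction on it. Banach's fixed-point theorem then produces a unique $u_\xi\in\overline{B_E(0,\delta)}$ with $u_\xi=T_\xi(u_\xi)$; defining $\tilde{H}(\xi):=u_\xi$ yields the claimed right inverse on $B_\ep(\eta_0)$, and its Lipschitz continuity follows from the parameter-dependent version of the contraction principle.

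The main obstacle is producing the bounded right inverse $R$: in a general Banach space, surjectivity of $H'(0)$ does not automatically imply the existence of a bounded linear right inverse, so one must either invoke the Bartle--Graves selection theorem (to obtain a continuous, possibly nonlinear right inverse, which is all the argument really needs) or verify that $\ker H'(0)$ is topologically complemented. In the Hilbert spaces $E$, $F$ built later in the paper, the orthogonal-complement construction does the job automatically. The remaining point is the simultaneous calibration of $\delta$ and $\ep$ so that self-mapping and contraction hold on the same ball, which is routine once the decay $\|r(u_1)-r(u_2)\|_F=o(\|u_1-u_2\|_E)$ is in hand.
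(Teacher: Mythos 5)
The paper does not prove this theorem at all: it is quoted as a known result and referred to \cite{fursikov1996controllability,yamamoto2003carleman}, so there is no in-paper argument to compare against. Your proposal supplies the standard proof: write $H(u)=\eta_0+H'(0)u+r(u)$ with $r(0)=0$, $r'(0)=0$, use the $C^1$ hypothesis and the mean value inequality to make $r$ Lipschitz with small constant near $0$, pick a bounded right inverse $R$ of $H'(0)$, and run Banach's fixed point theorem on $T_\xi(u)=R(\xi-\eta_0-r(u))$; the calibration $\mu=(2\|R\|)^{-1}$, $\ep=\delta/(2\|R\|)$ indeed makes $T_\xi$ a $\tfrac12$-contraction of $\overline{B_E(0,\delta)}$ into itself, and the one-directional implication (fixed point $\Rightarrow H(u)=\xi$) is all that a right inverse requires. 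This is correct and, in the Hilbert-space setting in which the theorem is actually applied in the paper (the spaces $E$ and $F$ of Section 4 are Hilbert), the orthogonal-complement construction of $R$ is legitimate, so your argument fully covers the use made of the theorem.

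One caveat on the general Banach-space statement, which you partly flag yourself: your remark that a Bartle--Graves continuous selection ``is all the argument really needs'' is not accurate for the scheme as written, since a merely continuous (nonlinear) right inverse need not be Lipschitz and then $T_\xi$ need not be a contraction. In the absence of a complemented kernel one should instead run the Lyusternik--Graves iteration, which only requires a norm-bounded (possibly discontinuous, nonlinear) selection $y\mapsto u$ with $H'(0)u=y$ and $\|u\|_E\le C\|y\|_F$ furnished by the open mapping theorem, and sums a geometrically convergent series of corrections. With that substitution your proof covers the theorem in the stated Banach generality; as it stands it proves the Hilbert case, which is what the paper needs.
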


\begin{lem}\label{lema3.3}
	Let $H:E\to F$ be the mapping defined by (\ref{H}). Then $H$ is well defined.
\end{lem}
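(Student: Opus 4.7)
The plan is to verify the two defining conditions of $F = G\times H_a^1$ separately for the two components of $H$. The second component is the trivial one: by the very definition of $E$, every $(u,h)\in E$ has $u(\cdot,0)\in H_a^1$, so $H_2(u,h)\in H_a^1$ with $\|H_2(u,h)\|_{H_a^1}\le \|(u,h)\|_E$.

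The work is in showing $H_1(u,h)\in G$, i.e.\ that both $H_1(u,h)\in L^2(\dom)$ and $\rho_0 H_1(u,h)\in L^2(\dom)$. Since $\rho_0$ is bounded below away from $0$ on $\dom$, the $\rho_0$-weighted bound automatically implies the unweighted one, so the real task is to estimate $\intq \rho_0^2|H_1(u,h)|^2$. Using $b(x,r)=\ell(r)a(x)$, I would split
\begin{equation*}
H_1(u,h) = \bigl[u_t-(a(x)u_x)_x-h\chi_\omega\bigr] - \Bigl[\ell\!\left(\int_0^1 u\right)-1\Bigr](a(x)u_x)_x + f(t,x,u),
\end{equation*}
and bound the three pieces in $G$ one by one.

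For the bracketed linear piece, the membership $\rz\bigl[u_t-(au_x)_x-h\chi_\omega\bigr]\in L^2(\dom)$ is built into the definition of $E$. For the nonlinear coefficient piece, I would exploit $\ell(0)=1$ together with the boundedness of $\ell'$ (Assumption \ref{hyp_a}) to obtain $\bigl|\ell(\int_0^1 u)-1\bigr|\le C\bigl|\int_0^1 u\bigr|$, whence
\begin{equation*}
\intq \rz^2\Bigl[\ell\!\left(\int_0^1 u\right)-1\Bigr]^2 \bigl|(au_x)_x\bigr|^2 \le C\intq \rz^2\left(\int_0^1 u\right)^2 \bigl|(au_x)_x\bigr|^2,
\end{equation*}
and then apply Proposition \ref{prop3.1} with $\bu=u$ to control this by $C\|(u,h)\|_E^4$. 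This is the step that is most delicate, and it is precisely the reason Proposition \ref{prop3.1} (and the preparatory Lemmas \ref{lema3.1}, \ref{prop2.5}, \ref{prop2.6} and Proposition \ref{lema3.2}) were established beforehand. For the semilinear piece, Assumption \ref{hyp_f} gives $f(t,x,0)=0$ and $D_3 f\in L^\infty$, so $|f(t,x,u)|\le C|u|$ and hence
\begin{equation*}
\intq \rz^2 |f(t,x,u)|^2 \le C\intq \rz^2 u^2 \le C\|(u,h)\|_E^2.
\end{equation*}

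Summing the three contributions yields $\|H_1(u,h)\|_G \le C\bigl(\|(u,h)\|_E + \|(u,h)\|_E^2\bigr)$, finishing the verification $H(u,h)\in F$. The main obstacle is the nonlinear term: to convert the pointwise Lipschitz control of $\ell$ into a genuine $G$-estimate one cannot afford to lose weight, and this forces the careful interplay between $\rz$, $\re$, $\rc$ and the bound on $\int_0^1 u$ in terms of $\|(u,h)\|_E$ provided by Lemma \ref{lema3.1}; everything else is routine.
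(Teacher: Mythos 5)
Your argument is correct and follows essentially the same route as the paper: the same three-term splitting of $H_1$ (linear part controlled by the definition of $E$, the nonlocal coefficient term reduced via $\ell(0)=1$ and the Lipschitz bound on $\ell$ to Proposition \ref{prop3.1}, and the semilinear term bounded by $C\intq \rz^2 u^2$ using Assumption \ref{hyp_f}), together with the trivial observation for $H_2$. No discrepancies worth noting.
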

\begin{proof}
	Indeed, given $(u,h)\in E$, we already have $H_2(u,h)=u(0,\cdot)\in H^1_a$. Moreover,
	\begin{align*}
	& \intq\rz^2|H_1(u,h)|^2  = \intq \rz^2\left|u_t-\left(b\left(x,\int_0^1u\   \right)u_x\right)_x+f(t,x,u)-h\chi_\omega\right|^2   \\
	& \leq 3\intq \rz^2\left|u_t-\left(b\left(x,0\right)u_x\right)_x-h\chi_\omega\right|^2   + 3\intq \rz^2\left|\left(\left[b\left(x,\int_0^1u\   \right)-b\left(x,0\right)\right]u_x\right)_x\right|^2   \\
	&\phantom{ \leq  } +3 \intq \rz^2\left|f(t,x,u)\right|^2   \\
	& \leq 3A_1+3A_2+3A_3.
	\end{align*}
	
	The definition of the space $E$ gives us $A_1   \leq \|(u,h)\|_E^2$. Also, the assumption \ref{hyp_f} implies
	$$A_3=\intq\rz^2|f(t,x,u)-f(t,x,0)|^2\leq C\intq \rz^2|u|^2   \leq \|(u,h)\|_E^2.$$
	
	It remains to analyze $A_2$. Since $\ell$ is Lipschitz-continuous and applying Proposition \ref{prop3.1} we have
	\begin{equation*}
	A_2  =\intq \rz^2\left|\left[\ell\left(\int_0^1u\   \right)-\ell(0)\right]\left(a(x)u_x\right)_x\right|^2   \leq C\intq \rz^2\left(\int_0^1u\   \right)^2\left|\left(a(x)u_x\right)_x\right|^2  \leq C\|(u,h)\|_E^4.
	\end{equation*}
	
	In this case, we also have $H_1(u,h)\in G$, which completes the proof.
\end{proof}

\begin{lem}\label{lema3.4}
	The mapping $H$ is of class $C^1$. 
\end{lem}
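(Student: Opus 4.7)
The plan is to check Fr\'echet differentiability of $H$ at each point and continuity of $(u,h)\mapsto H'(u,h)$. Since $H_2(u,h)=u(\cdot,0)$ is a bounded linear map from $E$ into $H_a^1$ (this is built into the $E$-norm), $H_2$ is trivially $C^1$, so I focus on $H_1$. Using $b(x,r)=\ell(r)a(x)$ and the $x$-independence of $\ell(\int_0^1 u)$, I rewrite
$$H_1(u,h)=u_t-\ell\Bigl(\int_0^1 u\Bigr)(au_x)_x+f(t,x,u)-h\chi_\omega,$$
and formal differentiation suggests the candidate derivative
\begin{align*}
H_1'(u,h)[\bu,\bh] =\ & \bigl[\bu_t-(a\bu_x)_x-\bh\chi_\omega\bigr]+\Bigl[1-\ell\Bigl(\int_0^1 u\Bigr)\Bigr](a\bu_x)_x \\
& -\ell'\Bigl(\int_0^1 u\Bigr)\Bigl(\int_0^1 \bu\Bigr)(au_x)_x+D_3 f(t,x,u)\,\bu.
\end{align*}

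I would first verify that $H_1'(u,h)$ maps $E$ into $G$ boundedly. The bracketed piece is controlled in $G$-norm by $\|(\bu,\bh)\|_E$ by definition of the $E$-norm, and $D_3f(t,x,u)\bu$ is controlled by boundedness of $D_3f$ together with $\rz\bu\in L^2(\dom)$. The two nonlocal terms are the crux: using $\ell(0)=1$ together with the mean value theorem, $|1-\ell(\int_0^1 u)|\leq C|\int_0^1 u|$, so both reduce to quantities of the form $\intq\rz^2(\int_0^1 w)^2|(az_x)_x|^2$ that are exactly what Proposition \ref{prop3.1} estimates, yielding a bound $C\|(u,h)\|_E\|(\bu,\bh)\|_E$. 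Next I would prove Fr\'echet differentiability by showing the remainder $R:=H_1(u+\bu,h+\bh)-H_1(u,h)-H_1'(u,h)[\bu,\bh]$ is $o(\|(\bu,\bh)\|_E)$ in $G$. Expanding, $R$ splits into an $\ell$-Taylor remainder $[\ell(\int_0^1(u+\bu))-\ell(\int_0^1 u)-\ell'(\int_0^1 u)\int_0^1\bu](au_x)_x$; a cross term $[\ell(\int_0^1(u+\bu))-\ell(\int_0^1 u)](a\bu_x)_x$; and an $f$-Taylor remainder $f(t,x,u+\bu)-f(t,x,u)-D_3f(t,x,u)\bu$. Taylor's theorem for $\ell$ (with $\ell'$ bounded) combined with Proposition \ref{prop3.1} bounds the first two pieces by $C\|(\bu,\bh)\|_E^2$, while the $f$-piece, written as $\int_0^1[D_3f(t,x,u+\sigma\bu)-D_3f(t,x,u)]\bu\,d\sigma$, is $o(\|(\bu,\bh)\|_E)$ in $G$ by continuity and boundedness of $D_3f$ together with dominated convergence.

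For continuity of $(u,h)\mapsto H_1'(u,h)$, suppose $(u_n,h_n)\to(u,h)$ in $E$. By Lemma \ref{lema3.1}, $\int_0^1 u_n\to\int_0^1 u$ uniformly on $[0,T]$, so $\ell(\int_0^1 u_n)$ and $\ell'(\int_0^1 u_n)$ converge uniformly; and, up to a subsequence, $u_n\to u$ a.e., so $D_3f(t,x,u_n)\to D_3f(t,x,u)$ boundedly and a.e. One then passes to the limit term by term in $(H_1'(u_n,h_n)-H_1'(u,h))[\bu,\bh]$ uniformly over $\|(\bu,\bh)\|_E\leq 1$, using Proposition \ref{prop3.1} to handle the factors $(au_{n,x})_x$ and dominated convergence elsewhere. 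The main obstacle throughout is the coupling between the nonlocal factor $\int_0^1\bu$ and the degenerate second derivative $(au_x)_x$, which is only $L^2$-integrable with the lighter weight $\re$ from Proposition \ref{lema3.2}; Proposition \ref{prop3.1} is exactly what upgrades this to the $\rz$-weighted bound required for membership in $G$.
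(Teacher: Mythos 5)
Your overall architecture (the candidate derivative, the use of Proposition \ref{prop3.1}, Proposition \ref{lema3.2} and Lemma \ref{lema3.1} to handle the nonlocal terms, and the final continuity argument for $(u,h)\mapsto H_1'(u,h)$) is the same as the paper's, but there is a genuine gap in the differentiability step. You try to prove Fr\'echet differentiability of $H_1$ directly, and the weak point is the $f$-remainder: you assert that
$\intq \rz^2\bigl|\int_0^1[D_3f(t,x,u+\sigma\bu)-D_3f(t,x,u)]\,\bu\,d\sigma\bigr|^2$
is $o(\ny{(\bu,\bh)}^2)$ ``by continuity and boundedness of $D_3f$ together with dominated convergence.'' Dominated convergence does not deliver this: after dividing by $\ny{(\bu,\bh)}^2$ the integrand is a weighted average of $|D_3f(t,x,u+\sigma\bu)-D_3f(t,x,u)|^2$ against the density $\rz^2|\bu|^2/\ny{(\bu,\bh)}^2$, and even though the first factor tends to $0$ a.e.\ along a sequence with $\ny{(\bu_n,\bh_n)}\to 0$, the density may concentrate exactly where that factor is of order one. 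This is the classical obstruction showing that superposition operators $u\mapsto f(\cdot,\cdot,u)$ acting within an $L^2$-type scale are G\^ateaux but in general \emph{not} Fr\'echet differentiable; ruling out such concentration here would require an additional argument exploiting the stronger $E$-norm (e.g.\ some embedding giving uniform control of $\bu$), which you do not provide. A smaller overclaim of the same kind: you bound the $\ell$-Taylor remainder $[\ell(\int_0^1(u+\bu))-\ell(\int_0^1 u)-\ell'(\int_0^1 u)\int_0^1\bu](au_x)_x$ by $C\ny{(\bu,\bh)}^2$, which would need $\ell'$ Lipschitz, whereas Assumption \ref{hyp_a} only gives $\ell\in C^1$ with bounded derivative; this piece can still be shown to be $o(\ny{(\bu,\bh)})$ by combining the sup bound of Lemma \ref{lema3.1} for $\int_0^1\bu$ with uniform continuity of $\ell'$ on compact sets and Proposition \ref{prop3.1}, but not as you stated it.

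The paper avoids both difficulties by a different (and safer) route: it computes the G\^ateaux derivative $L(\bu,\bh)$ at a fixed direction $(\bu,\bh)$, where the $f$-term is handled by the mean value theorem and dominated convergence legitimately (the direction $\bu$ is fixed and the integrand is dominated by $C\rz^2|\bu|^2\in L^1$), and then proves that $H_1':E\to\mathcal L(E,G)$ is continuous (your Claim-2-type argument, which is essentially the same as the paper's). Continuity of the G\^ateaux derivative then yields $C^1$ Fr\'echet differentiability by the standard mean-value argument. If you want to keep your direct Fr\'echet approach, you must either supply the missing uniformity for the $f$-remainder or fall back on exactly this G\^ateaux-plus-continuity scheme.
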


\begin{proof}It is clear that $H_2\in C^1$. We will prove that $H_1$ has a continuous Gateaux derivative on $E$.

	For $(u,h),(\bu,\bh)\in E$ and $\lambda>0$, set
	\begin{align*}
	& b:=b\left(x,\into u\right), & &\bl:= b\left(x,\into(u+\lambda\bu)\right),&   & b_2:=D_2b\left(x,\into u\right),  \\
	& f:=f(t,x,u), & & \fl:=f(t,x,u+\lambda \bu),& & f_3:=D_3f(t,x,u).
	\end{align*}

	\noindent\textbf{Claim 1:} Given $(u,h)\in E$, the linear mapping $L:E\to F$, defined by
	$$L(\bu,\bh):=\bu_t-\left(b_2\cdot\left(\into \bu\right)u_x+b\bu_x\right)_x+f_3\bu-\bh\chi_\omega,$$
	is the Gateaux derivative of $H_1$ at $(u,h)\in E$.
	
	Indeed, for any $(\bu,\bh)\in E$, we have
	\begin{align*}
	& \nf{\frac{1}{\lambda}\left(H_1(u+\lambda \bu,h+\lambda \bh)-H_1(u,h)\right)-L(\bu,\bh)}\\
	&=\nf{\bu_t-(\bl \bu_x)_x-\frac{1}{\lambda} \left((\bl-b)u_x\right)_x+\frac{1}{\lambda}(\fl-f)-\bh\chi_\omega-L(\bu,\bh)}\\
	&\leq \nf{\left((\bl-b)\bu_x\right)_x}+\nf{\left(\left[\frac{1}{\lambda}(\bl-b)-b_2\left(\into \bu\right)\right]u_x \right)_x}+\nf{\frac{1}{\lambda}(\fl-f)-f_3\bu}\\
	&= B_1+B_2+B_3.
	\end{align*}
	We must prove that $B_i\to 0$, as $\lambda \to 0$, for all $i=1,2,3$.
	
	Firstly, recalling the assumption \ref{hyp_f}, we apply mean value 
	and Lebesgue's theorems to obtain
	\begin{align*}
	B_3^2 & \leq \intq \rz^2\left|(D_3f(t,x,u_\lambda^\ast)-D_3f(t,x,u))\bu\right|^2\to 0,
	\end{align*}
	as $\lambda \to 0$.
	
	Secondly, the assumption \ref{hyp_a} and Proposition \ref{prop3.1}, we have
	\begin{align*}
	B_1^2  &=\intq \rz^2\left|\ell\left(\into (u+\lambda\bu)\right)-\gu\right|^2|(au_x)_x|^2 \leq C\intq \rz^2\left(\into \lambda \bu\right)^2|(au_x)_x|^2\\
	&\leq C\lambda^2\ny{(\bu,\bh)}^2\ny{(u,h)}^2\to 0,
	\end{align*}
	as $\lambda \to 0$.
	
	In a similar way, using the assumption \ref{hyp_a} and Mean Value Theorem, for each $\lambda>0$, there exists  $\xi_{\lambda}\in \R$, between $\into u$ and $ \into (u+\lambda \bu)$, such that
	\begin{align*}
	B_2^2& =\intq \rz^2\bigg|\frac{1}{\lambda}\left[\ell\left(\into (u+\lambda\bu)\right)-\gu\right]
	-\ell'\left(\into u\right)\left(\into \bu\right)\bigg|^2\left|\left(au_x\right)_x\right|^2\\
	&=\intq \rz^2\left|\ell'\left(\xi_\lambda\right)\left(\into \bu\right)-\ell'\left(\into u\right)\left(\into \bu\right)\right|^2\left|\left(au_x\right)_x\right|^2\\
	& =\intq \rz^2\left|\ell'\left(\xi_\lambda\right)-\ell'\left(\into u\right)\right|^2\left(\into \bu\right)^2\left|\left(au_x\right)_x\right|^2\to 0,
	\end{align*}
	as $\lambda \to 0$. Thus, we have concluded the Claim 1.
	
	\noindent\textbf{Claim 2:} The Gateaux derivative $H'_1:E\to \mathcal{L}(E,G)$ is continuous.

	Take $(u,h)\in E$. Let $((u^{n} ,h^{n}) )_{n=1}^{\infty}$ be a sequence such that $\| (u^{n} ,h^{n} )-(u,h) \| _{E} \rightarrow 0$. We will prove that $\| H'_1 (u^{n} ,h^{n} )-H'_1 (u,h) \| _{\mathcal L (E,G)} \rightarrow 0$. In fact, consider $(\bu ,\bh) $ on the unit sphere of $E$. Since
	\begin{align*}
	\displaystyle H_{1}^{\prime} (u,h) (\bu ,\bh ) =& \bu_{t} - \left( D_{2} b \left( x,\int_{0}^{1} u \right) \left( \int_{0}^{1} \bu \right) u_{x} + b \left( x,\int_{0}^{1} u \right) \bu_{x} \right) _{x} \\
	&+ D_{3} f(t,x,u) \bu + \bh \chi _{\omega} , 
	\end{align*}
	and
	\begin{align*}
	\displaystyle H_{1}^{\prime} (u^{n} ,h^{n} ) (\bu ,\bh ) = & \bu_{t} - \left( D_{2} b \left( x,\int_{0}^{1} u^{n} \right) \left( \int_{0}^{1} \bu \right) u_{x}^{n} + b \left( x,\int_{0}^{1} u^{n} \right) \bu_{x} \right) _{x} \\
	&+ D_{3} f(t,x,u^{n} ) \bu + \bh \chi _{\omega},
	\end{align*}
	we obtain
	\begin{align*}
	&\displaystyle \| (H'_1 (u^{n} ,h^{n} ) - H^{\prime}_1 (u,h) )(\bu ,\bh)\| _{G}^{2}\\
	&\qquad {} \leq C\int _{Q} \rho _{0}^{2} \left(\int_{0}^{1} \bu \right) ^{2} \left[ \ell^{\prime} \left( \int_{0}^{1} u \right) \right] ^{2} |[a(u-u^{n} )_{x} ]_{x} |^{2} \\
	&\phantom{\leq } \qquad {} +C \intq \rho _{0}^{2} \left(\int_{0}^{1} \bu \right) ^{2} \left| \ell^{\prime} \left( \int_{0}^{1} u \right) - \ell^{\prime} \left( \int_{0}^{1} u^{n} \right) \right| ^{2} |(au^{n}_{x} )_{x} |^{2} \\
	&\phantom{\leq } \qquad {} +C \intq \rho _{0}^{2} \left| \ell\left( \int_{0}^{1} u \right) - \ell
	\left( \int_{0}^{1} u^{n} \right) \right| ^{2} |(a\bu_{x} )_{x} |^{2} \\
	&\phantom{\leq } \qquad {}+C \intq \rho _{0}^{2} |\bu |^{2} |D_{3} f(t,x,u^{n} ) -D_{3} f(t,x,u) |^{2} \\
	&\qquad {}\leq C\int _{Q} \rho _{0}^{2} \left(\int_{0}^{1} \bu \right) ^{2} |[a(u-u^{n} )_{x} ]_{x} |^{2} \\
	&\phantom{\leq }\qquad {} +C \intq \rho _{0}^{2} \left(\int_{0}^{1} \bu \right) ^{2} \left| \ell^{\prime} \left( \int_{0}^{1} u \right) - \ell^{\prime} \left( \int_{0}^{1} u^{n} \right) \right| ^{2} |(au^{n}_{x} )_{x} |^{2} \\
	&\phantom{\leq }\qquad {} +C \intq \rho _{0}^{2} \left| \int_{0}^{1} (u^{n} -u) \right| ^{2} |(a\bu_{x} )_{x} |^{2} \\
	&\phantom{\leq } \qquad {} +C \intq \rho _{0}^{2} |\bu |^{2} |D_{3} f(t,x,u^{n} ) -D_{3} f(t,x,u) |^{2} \\
	&\qquad {} :=C (I _{1} + I _{2} + I _{3} + I _{4} ) \emph{.}
	\end{align*} 
	
	Due to Proposition \ref{lema3.2}, we have
	\begin{equation*}
	\displaystyle  I _{1} 
	= \int _{Q} \rho _{0}^{2} \left(\int_{0}^{1} \bu \right) ^{2} |[a(u-u^{n} )_{x} ]_{x} |^{2} \leq C \| (\bu ,\bh )\| _{E}^{2} \intq \rho _{\ast}^{2} | (a(u_{n} -u)_{x} )_{x} |^{2} 
	\leq C \| (u^{n} ,h^{n} ) - (u,h)\|_{E}^{2} \emph{.} 
	\end{equation*}
	
	Next, 
	\begin{align*}
	\displaystyle  I _{2}
	&=\intq \rho _{0}^{2} \left(\int_{0}^{1} \bu \right) ^{2} \left| \ell^{\prime} \left( \int_{0}^{1} u \right) - \ell^{\prime} \left( \int_{0}^{1} u^{n} \right) \right| ^{2} |(au^{n}_{x} )_{x} |^{2} \\
	&\leq C \intq \rho _{0}^{2} \left(\int_{0}^{1} \bu \right) ^{2} \left| \ell^{\prime} \left( \int_{0}^{1} u \right) - \ell^{\prime} \left( \int_{0}^{1} u^{n} \right) \right| ^{2} |(a(u^{n}-u)_{x} )_{x} |^{2}  \\
	&+C \intq \rho _{0}^{2} \left(\int_{0}^{1} \bu \right) ^{2} \left| \ell^{\prime} \left( \int_{0}^{1} u \right) - \ell^{\prime} \left( \int_{0}^{1} u^{n} \right) \right| ^{2} |(au_{x} )_{x} |^{2}  \\
	&:= I _{21} +  I _{22} \emph{.}
	\end{align*}
	
	Using assumption \ref{hyp_a} and applying Proposition \ref{lema3.2} again, we have
	\begin{align*}
	\displaystyle  I _{21}&= \intq e^{-s A} \tau ^{4} \left(\int_{0}^{1} \bu \right) ^{2} \bigg| \ell^{\prime} \left( \int_{0}^{1} u \right) 
	- \ell^{\prime} \left( \int_{0}^{1} u^{n} \right) \bigg| ^{2}
	\rho _{\ast}^{2} |(a(u^{n}-u)_{x} )_{x} |^{2} \\
	&\leq C \| (\bu ,\bh )\| _{E}^{2} \| (u^{n} ,h^{n} ) - (u,h)\| _{E}^{2}= C \| (u^{n} ,h^{n} ) - (u,h)\| _{E}^{2}.
	\end{align*} 
	Likewise,
	\begin{align*}
	\displaystyle  I _{22} 
	&=\intq \rho _{0}^{2} \left(\int_{0}^{1} \bu \right) ^{2} \left| \ell^{\prime} \left( \int_{0}^{1} u \right) - \ell^{\prime} \left( \int_{0}^{1} u^{n} \right) \right| ^{2} |(a(u^{n}-u)_{x} )_{x} |^{2} \\
	&\leq C\|(\bu ,\bh )\| _{E}^{2} 
	\intq \left| \ell^{\prime} \left( \int_{0}^{1} u \right) - \ell^{\prime} \left( \int_{0}^{1} u^{n} \right) \right| ^{2} \rho _{\ast}^{2} |(a u_{x} )_{x} |^{2} \to 0,
	\end{align*}
	as $n\to +\infty$, where we have used Lebesgue's Theorem.
	
	Analogously, using assumption \ref{hyp_f}, Lemma \ref{lema3.1} and Proposition \ref{lema3.2}, we have
	\begin{align*}
	\displaystyle
	I _{3}
	&=\intq \rho _{0}^{2} \left| \int_{0}^{1} (u^{n} -u) \right| ^{2} |(a\bu_{x} )_{x} |^{2}\leq C \|(u^{n} ,h^{n} )-(u,h)\| _{E}^{2}   
	\intq \rho _{\ast}^{2} |(a\bu_{x} )_{x} |^{2} \\
	&\leq C \|(u^{n} ,h^{n} )-(u,h)\| _{E}^{2}  
	\|(\bu ,\bh )\| _{E}^{2}=C \|(u^{n} ,h^{n} )-(u,h)\| _{E}^{2} \to 0
	\end{align*} 
	and
	\begin{align*}
	\displaystyle
	I _{4}&=\left( \intq \rho _{0}^{2} |\bu |^{2} |D_{3} f(t,x,u^{n} ) -D_{3} f(t,x,u) |^{2} \right) ^{\frac{1}{2}}
	\left( \intq \rho _{0}^{2} |\bu |^{2} |D_{3} f(t,x,u^{n} )-D_{3} f(t,x,u) |^{2} \right) ^{\frac{1}{2}} \\
	&\leq C\left( \intq \rho _{0}^{2} |\bu |^{2} |D_{3} f(t,x,u^{n} ) -D_{3} f(t,x,u) |^{2} \right) ^{\frac{1}{2}}
	\left( \intq \rho _{0}^{2} |\bu |^{2} \right) ^{\frac{1}{2}} \\
	&\leq C\left( \intq \rho _{0}^{2} |\bu |^{2} |D_{3} f(t,x,u^{n} ) -D_{3} f(t,x,u) |^{2} \right) ^{\frac{1}{2}}
	\|(\bu ,\bh)\| _{E} \\
	&= C\left( \intq \rho _{0}^{2} |\bu |^{2} |D_{3} f(t,x,u^{n} ) -D_{3} f(t,x,u) |^{2} \right) ^{\frac{1}{2}}\to 0,
	\end{align*}
	as $n\to +\infty$.  It concludes the proof of Claim 2 as well as this lemma.
\end{proof}

\begin{lem} \label{lema3.5} 
	$H'(0,0)\in \mathcal{L}(E;F)$ is onto. 
\end{lem}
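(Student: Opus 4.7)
The plan is to identify $H'(0,0)$ explicitly and reduce the surjectivity claim to the linear null controllability result already established in Proposition~\ref{prop2.4}.

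First, I would specialize the formula for $H_1'(u,h)$ from Claim 1 of Lemma~\ref{lema3.4} at the point $(u,h)=(0,0)$. Because $\ell(0)=1$ gives $b(x,0)=a(x)$ and $D_3 f(t,x,0)=c(t,x)$, and because the cross term $D_2 b(x,0)(\int_0^1\bar u)u_x$ vanishes identically when $u\equiv 0$, one obtains
$$
H'(0,0)(\bar u,\bar h)=\bigl(\bar u_t-(a(x)\bar u_x)_x+c(t,x)\bar u-\bar h\chi_\omega,\ \bar u(0,\cdot)\bigr).
$$
Surjectivity of $H'(0,0):E\to F=G\times H_a^1$ is therefore equivalent to the following statement: for every $(g,v_0)\in F$, the linear Cauchy problem~\eqref{prob3} with source $g$ and initial datum $v_0$ admits a solution $(u,h)$ lying in $E$.

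Second, given $(g,v_0)\in F$, the conditions $\int_0^T\!\int_0^1\rz^2 g^2<\infty$ and $v_0\in H_a^1$ are precisely the hypotheses of Proposition~\ref{prop2.4}, which yields a control $h\in L^2(\domw)$ and associated state $u$ solving~\eqref{prob3} with $u(T,\cdot)\equiv 0$ and satisfying~\eqref{eq25}. By construction, $H'(0,0)(u,h)=(g,v_0)$; the only thing left to worry about is membership in $E$.

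The remaining, and essentially only, step is to verify $\|(u,h)\|_E\le C\|(g,v_0)\|_F$. The first two pieces of $\|(u,h)\|_E^2$ come directly from~\eqref{eq25}, and the initial trace $u(0,\cdot)=v_0\in H_a^1$ is given. For the last piece, I would substitute the PDE into the integrand, using
$$
\rz\bigl(u_t-(a(x)u_x)_x-h\chi_\omega\bigr)=\rz\bigl(g-c(t,x)u\bigr),
$$
and deduce an $L^2(\dom)$ bound because $c\in L^\infty$ and both $\rz g$ and $\rz u$ belong to $L^2(\dom)$. The local regularity conditions $u_t,u_x,(a(x)u_x)_x\in L^2(\domw)$ follow from Lemmas~\ref{prop2.5} and~\ref{prop2.6} combined with standard interior parabolic regularity on the non-degenerate region $\omega\subset\subset(0,1)$, where the weights $\rz,\rc,\re$ are bounded above and below by positive constants. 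I do not expect a genuine obstacle here: the functional spaces $E$ and $F$ and the weights $\rz,\re$ were engineered precisely so that Proposition~\ref{prop2.4} would furnish a bounded right inverse of $H'(0,0)$, which gives surjectivity (and a fortiori verifies the hypothesis of Liusternik's theorem).
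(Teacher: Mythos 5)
Your proposal is correct and takes essentially the same route as the paper: identify $H'(0,0)(\bar u,\bar h)=\bigl(\bar u_t-(a\bar u_x)_x+c\,\bar u-\bar h\chi_\omega,\ \bar u(0,\cdot)\bigr)$ and invoke the linear null controllability result, Proposition~\ref{prop2.4}, to produce a preimage of any $(g,u_0)\in F$. The only difference is that the paper simply asserts the resulting pair lies in $E$, while you spell out that verification (via $\rho_0\bigl(u_t-(au_x)_x-h\chi_\omega\bigr)=\rho_0(g-cu)$, the estimates \eqref{eq25}, and Lemmas~\ref{prop2.5}--\ref{prop2.6} for the remaining regularity), which is a harmless and indeed welcome elaboration rather than a different argument.
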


\begin{proof}
	Take $(g,u_0)\in F$. According to Theorem \ref{prop2.4}, there exists $(u,h)\in E$ that solves (\ref{prob3}). In other words,
	\begin{align*}
	H'(0,0)(u,h) & =(H_1'(0,0)(u,h),H_2'(0,0)(u,h))\\
	&= (u_t-(a(x)u_x)_x+c(t,x)u-h\chi_w,u(\cdot,0))\\
	& =(g,u_0).
	\end{align*}
	It completes the proof of this lemma.
\end{proof}

At this point, we are ready to prove our main result.

\begin{proof}[Proof of Theorem \ref{th_1.1}]
	We have proved through Lemmas \ref{lema3.3}, \ref{lema3.4} and \ref{lema3.5} that $H$ satisfies the hypothesis of Liusternik's Theorem. As a consequence, there exist $\ep>0$ and a right inverse mapping $\widetilde{H}:B_\ep(0)\subset F\to E$ of $H$. In particular, if $u_0\in H^1_a$ and $\|u_0\|_{H^1_a}< \ep$ then, $(u,h):=\widetilde{H}(0,u_0)$ solves the system \eqref{prob1}. Finally, the condition $\intq\rz^2u^2<+\infty$ yields
	$$u(T,x)=0, \mbox{ for all } x\in [0,1].$$
\end{proof}

\begin{rem} 
	
	Theorem \eqref{th_1.1} is still valid considering  the nonlocal terms 
	$$\into u^2 \mbox{ or } \into a(x)u_x^2,$$ 
	instead of $\into u$. It is true because we can adapt Lemma \ref{lema3.1} by using the Sobolev embedding $W^{1,1}(0,T)\hookrightarrow C(0,T)$.
\end{rem}

\newpage

\appendix

\section{Proof of Proposition \ref{cor_hat}} \label{appendix}
\noindent

In order to prove Proposition \ref{cor_hat}, we start considering the system
\begin{equation}\lnum \label{pbA1}
\left\{\begin{array}{ll}
v_t+\left(a\left(x\right)v_x \right)_x=h(t,x), & (t,x)\in \dom, \\
v(t,1)=v(t,0)=0 & t\in (0,T).\\
\end{array}\right.   \end{equation}

\begin{prop} \label{propA1}
	There exist $C>0$ and $\lambda_0,s_0>0$ such that every solution  $v$ of (\ref{pbA1}) satisfies, for all $s\geq s_0$ and $\lambda\geq \lambda_0$, 
	\begin{equation}
	\intq e^{2s\varphi}\left((s\lambda)\sigma av_x^2+(s\lambda)^2\sigma^2v^2 \right) \leq C\left(\intq e^{2s\varphi}|h|^2\   +(\lambda s)^3\intw e^{2s\varphi}\sigma^3v^2\   \right)
	\end{equation} 
	
\end{prop}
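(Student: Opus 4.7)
The plan is to follow the standard Fursikov--Imanuvilov strategy, adapted to the degenerate setting as in \cite{alabau2006carleman,AAC}. The proof splits naturally into three stages: conjugation with the Carleman weight, a symmetric/skew-symmetric decomposition and an expansion of the resulting cross term, and finally a localization argument to replace a globally-in-$x$ observation term by one supported in $\omega$.

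First, I would set $w := e^{s\varphi}v$. Because $\theta(t)=[t(T-t)]^{-4}$ blows up at $t=0,T$, one has $w(0,x)=w(T,x)=0$, and the Dirichlet data gives $w(t,0)=w(t,1)=0$. A direct computation rewrites the equation as $Pw:=P^+w+P^-w=e^{s\varphi}h$, where
$$P^+w=(aw_x)_x+s^2 a\varphi_x^2\,w-s\varphi_t w,\qquad P^-w=w_t-2sa\varphi_x w_x-s(a\varphi_x)_x\,w,$$
so that $P^+$ is symmetric and $P^-$ skew-symmetric on $L^2(\dom)$. Squaring and taking the $L^2$ inner product yields
$$\|P^+w\|^2+\|P^-w\|^2+2\bigl(P^+w,P^-w\bigr)_{L^2(\dom)}=\|e^{s\varphi}h\|^2.$$
The idea is that the cross term $(P^+w,P^-w)$, after integration by parts, produces the two positive principal quantities that appear in the left-hand side of the claim.

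Second, I would expand $(P^+w,P^-w)$ carefully. Using $\varphi_x=\lambda\theta\eta\psi_x=\lambda\theta\eta\,x/a(x)$, so that $a\varphi_x^2=\lambda^2\sigma^2 x^2/a$ (recall $\sigma=\theta\eta$), the leading terms after integration by parts are
$$s\lambda\intq \sigma\, a\,w_x^2+(s\lambda)^3\intq \sigma^3\frac{x^2}{a}\,w^2.$$
Since $x^2/a(x)$ is bounded below by a positive multiple of $x^2$ and above by $1/a(1)$ on $(0,1]$ (by assumption \ref{hyp_a} and the remark following it), this dominates $(s\lambda)^2\intq\sigma^2 w^2$ after combining with the Hardy--Poincar\'e inequality of Proposition \ref{HP} where needed. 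All lower-order contributions can be absorbed for $\lambda\geq\lambda_0$ and $s\geq s_0$ large. The crucial boundary terms at $x=0$ (such as $[a w w_x]_{x=0}$ or $[\varphi_x a w^2]_{x=0}$) vanish thanks to the Dirichlet condition $w(t,0)=0$ combined with the weak degeneracy $xa'(x)\leq Ka(x)$ with $K<1$ and the specific definition $\psi(x)=\int_0^x y/a(y)\,dy$ near $0$; at $x=1$ they vanish directly from $w(t,1)=0$ and the smoothness of $\psi$ outside of $(0,\alpha')\cup(\beta',1)$.

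Third, the estimate obtained so far contains on its left a \emph{global} integral in $x$ of $(s\lambda)^3\sigma^3 w^2$. To replace it by a localized one over $\omega$, introduce a cut-off $\chi\in C^\infty_c(\omega)$ with $\chi\equiv 1$ on $\omega'$, multiply the original equation for $v$ by $s^2\lambda^2\chi\sigma^2 e^{2s\varphi}v$ and integrate by parts. This yields a bound for $s^2\lambda^2\int_0^T\!\int_{\omega'}e^{2s\varphi}\sigma^2 a v_x^2$ in terms of $(s\lambda)^3\intw e^{2s\varphi}\sigma^3 v^2$ plus $\intq e^{2s\varphi}|h|^2$ plus terms that can be absorbed into the already controlled left-hand side. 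Reverting from $w$ to $v$ by $w_x=e^{s\varphi}(v_x+s\varphi_x v)$ produces only correction terms of the form $s^2\lambda^2\intq e^{2s\varphi}\sigma^2 x^2/a\cdot v^2$, again absorbable. The desired inequality then follows.

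The hardest point, and the one I would treat with most care, is the second stage: the exact accounting of boundary contributions at $x=0$ under the degeneracy, and verifying that every lower-order remainder generated by the expansion of $(P^+w,P^-w)$ is genuinely controlled by the two principal terms $s\lambda\sigma a w_x^2$ and $(s\lambda)^3\sigma^3(x^2/a) w^2$ for $\lambda, s$ large enough. The extension from the specific degeneracy $a(x)=x^\alpha$ treated in \cite{AAC} to a general $a$ satisfying $xa'\leq Ka$ enters here: one replaces the monomial identities of \cite{AAC} by estimates of the form $|a'(x)|\leq K a(x)/x$ and $x^2/a(x)\leq 1/a(1)$ to bound all intermediate coefficients.
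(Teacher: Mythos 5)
Your overall architecture is the paper's: conjugation $w=e^{s\varphi}v$, the same splitting into $L^+w$ and $L^-w$, expansion of the cross term, Hardy--Poincar\'e for the zeroth-order term, and a final cut-off argument. However, there is a genuine gap at the centre of your stage 2: you take $\psi_x=x/a$ (hence $\varphi_x=\lambda\sigma x/a$) on all of $(0,1)$ and conclude that the cross term yields the positive quantities $s\lambda\intq\sigma a w_x^2+(s\lambda)^3\intq\sigma^3\frac{x^2}{a}w^2$ \emph{globally} in $x$, with all boundary terms killed by the Dirichlet data. Neither assertion holds. The weight is deliberately piecewise: $\psi'=x/a$ only on $[0,\alpha')$, while $\psi'=-x/a$ on $[\beta',1]$, with an interpolation inside $\omega'$. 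The surviving boundary term is $-s\int_0^T a^2\varphi_x w_x^2\big\vert_{x=0}^{x=1}$; it involves $w_x^2$, so it does \emph{not} vanish from $w(t,1)=0$, and with a globally increasing $\psi$ it would have the wrong sign at $x=1$ (it is nonnegative precisely because $\psi'<0$ there, Lemma \ref{A1}). Worse, with your global choice of $\psi_x$ the expansion of $(L^+w,L^-w)$ would generate no interior observation term at all, so you would be deriving the claimed estimate with an empty observation, which is false. In the paper the sign change of $\psi'$ inside $\omega'$ is exactly what (i) gives the good boundary sign, (ii) confines the terms $\sigma^3\frac{x^2}{a}w^2$ and $\sigma a w_x^2$ to $(0,\alpha')$, the positive contributions near $x=1$ being instead $a^2|\psi'|^4\sigma^3w^2$ and $a^2|\psi'|^2\sigma w_x^2$, and (iii) produces the negative local terms $s^3\lambda^3\intwl\sigma^3w^2$ and $s\lambda\intwl\sigma w_x^2$ collected in Lemma \ref{A17}.

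Because your stage 2 produced no local terms, your stage 3 is aimed at the wrong target: the purpose of the cut-off computation (multiplying \eqref{pbA1} by $\lambda s e^{2s\varphi}\sigma\chi v$) is not to ``localize'' a global $\sigma^3$-term --- the global zeroth-order term is recovered on the left through the interpolation $\sigma^2w^2=\bigl(\sigma^{3/2}\tfrac{x}{\sqrt a}w\bigr)\bigl(\sigma^{1/2}\tfrac{\sqrt a}{x}w\bigr)$ together with Hardy--Poincar\'e (your pointwise bound $x^2/a\geq cx^2$ alone degenerates at $x=0$ and is not sufficient) --- but to eliminate the local gradient observation $\lambda s\intwl e^{2s\varphi}\sigma a v_x^2$ inherited from (iii), trading it for $(\lambda s)^3\intw e^{2s\varphi}\sigma^3v^2$ plus $\intq e^{2s\varphi}|h|^2$, as in the last part of Lemma \ref{A18}. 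Once you restore the piecewise weight, keep track of the $w_x^2$ boundary term, and redirect the Caccioppoli step at the local gradient term, your sketch coincides with the paper's proof.
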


The proof of Proposition \ref{propA1} relies on the change of variables $w=e^{s\varphi}v$. Notice that
\begin{align*}
& v_t=e^{-s\varphi}(-s\varphi_tw+w_t),\\
& (av_x)_x=e^{-s\varphi}(s^2\varphi^2_xaw-s(a\varphi_x)_xw-2sa\varphi_xw_x+(aw_x)_x).
\end{align*} 
Then, from \eqref{pbA1}, we obtain 
$$
\begin{cases}
L^+w+L^-w=e^{s\varphi}h,  & (t,x)\in \dom, \\
w(t,1)=w(t,0)=0, & t\in (0,T).\\
w(x,0)=w(x,T)=0, & x\in (0,1),
\end{cases}$$
where
$$L^+w:=-s\varphi_t w+s^2\varphi_x^2aw+(aw_x)_x,$$
$$L^-w:=w_t-s(a\varphi_x)_xw-2sa\varphi_xw_x.$$

In this way,
$$\|L^+w\|^2+\|L^-w\|^2+2(L^+w,L^-w)=\|e^{s\varphi}h\|^2,$$
where $\|\cdot\|$ and $(\cdot,\cdot)$ denote the norm and the inner product in $L^2(\dom)$, respectively.

From now on, we will prove  Lemmas \ref{A1}--\ref{A18}. The proof of Proposition \ref{propA1} will be a consequence of these lemmas.
\begin{lem}\label{lemA2}
	\begin{align*}
	(L^+w,L^-w) & =\frac{s}{2}\intq\varphi_{tt}w^2-2s^2\intq \varphi_{tx}a\varphi_xw^2 +s^3\intq a\varphi_x(a\varphi_x^2)_xw^2\\
	&+s\intq(a\varphi_x)_{xx}aww_x
	+ 2s\intq(a\varphi_x)_xaw_x^2 -s\intq a\varphi_xa_xw_x^2 -s\int_0^T(a^2\varphi_xw_x^2)\big\vert_{x=0}^{x=1}
	\end{align*}
\end{lem}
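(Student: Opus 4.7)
The plan is to expand the $L^2(\dom)$ inner product $(L^+w, L^-w)$ into the nine bilinear cross-terms arising from the three summands of $L^+w$ against the three summands of $L^-w$, and to push every derivative of $w$ onto the weights via integration by parts. The boundary data needed is essentially free: $w=e^{s\varphi}v$ inherits $w(t,0)=w(t,1)=0$ from the Dirichlet condition on $v$, and because $\varphi(x,t)\to -\infty$ as $t\to 0^+$ and $t\to T^-$ (since $\theta(t)=1/[t(T-t)]^4$ blows up while $\eta-e^{3\lambda|\psi|_\infty}<0$), $w$ and its spatial derivatives vanish at $t=0$ and $t=T$. Therefore every temporal boundary contribution disappears, and the only spatial boundary term that can possibly survive is the one containing an un-differentiated $w_x^2$ factor.

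I would group the nine products by their resulting order in $s$. The single $O(s)$ piece comes from $(-s\varphi_t w)(w_t) = -\tfrac{s}{2}\varphi_t(w^2)_t$ and becomes $\tfrac{s}{2}\intq \varphi_{tt}w^2$ after integrating by parts in $t$. The three $O(s^2)$ products carrying $w^2$ or $ww_x$ — namely $(-s\varphi_tw)(-s(a\varphi_x)_xw)$, $(-s\varphi_tw)(-2sa\varphi_xw_x)$, and $(s^2\varphi_x^2 aw)(w_t)$ — yield, after one $x$-integration by parts on the middle one (rewriting $2ww_x=(w^2)_x$) and one $t$-integration by parts on the last, three contributions whose $\varphi_t(a\varphi_x)_x$ pieces cancel, leaving precisely $-2s^2\intq\varphi_{tx}a\varphi_x w^2$. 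For the two $O(s^3)$ products, an $x$-integration by parts on $(s^2\varphi_x^2 aw)(-2sa\varphi_xw_x)$ converts it into $s^3\intq(\varphi_x^3 a^2)_x w^2$, and the algebraic identity $-\varphi_x^2 a(a\varphi_x)_x+(\varphi_x^3 a^2)_x = a\varphi_x(a\varphi_x^2)_x$ delivers the cubic term exactly as stated.

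The remaining three products have $(aw_x)_x$ as one factor. First, $((aw_x)_x)(w_t)$ is zero: an $x$-integration by parts (using $w_t|_{x=0,1}=0$) yields $-\tfrac12\into a(w_x^2)_t$, which integrates to zero in $t$ since $w_x|_{t=0,T}=0$. Next, $((aw_x)_x)(-s(a\varphi_x)_xw)$, integrated by parts in $x$, produces both $s\intq a(a\varphi_x)_{xx}ww_x$ and $s\intq a(a\varphi_x)_x w_x^2$. Finally, expanding $(aw_x)_x=a_xw_x+aw_{xx}$ in $((aw_x)_x)(-2sa\varphi_x w_x)$ and rewriting $2w_xw_{xx}=(w_x^2)_x$, a single $x$-integration by parts produces the boundary term $-s\int_0^T(a^2\varphi_xw_x^2)|_{x=0}^{x=1}$ together with interior pieces that, when combined with the $w_x^2$ contribution from the previous step, collapse to $2s\intq(a\varphi_x)_xaw_x^2-s\intq a\varphi_x a_xw_x^2$ (the $a a_x\varphi_x$ terms partially cancel, while the $a^2\varphi_{xx}$ terms reinforce, and one checks both sides equal $s\intq a a_x\varphi_x w_x^2 + 2s\intq a^2\varphi_{xx}w_x^2$). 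The only real obstacle here is bookkeeping: keeping track of the signs of the nine cross-terms, performing each integration by parts in the correct variable, and verifying that each boundary term either vanishes by the boundary conditions on $w$ or collects into the single explicit boundary integral of the statement. No estimate is required — every step is a direct integration by parts or an elementary algebraic rearrangement.
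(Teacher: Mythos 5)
Your proposal is correct and follows essentially the same route as the paper: expand $(L^+w,L^-w)$ into the nine cross-terms, integrate by parts in $x$ or $t$, use the vanishing of $w$ (and $w_x$) at $t=0,T$ forced by the weight and the Dirichlet condition at $x=0,1$, and keep only the single spatial boundary term in $w_x^2$. The paper merely organizes the terms as four blocks ($L^+w$ against $w_t$, then each summand of $L^+w$ against the remaining part of $L^-w$) rather than by powers of $s$, which is a purely cosmetic difference; your algebraic identities and cancellations match the paper's computation.
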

\begin{proof}
	From the definition of $L^+w$ and $L^-w$ we have
	\begin{align*}
	(L^+w,L^-w) = & \intq (-s\varphi_tw+s^2\varphi_x^2aw+(aw_x)_x)w_t +s^2\intq\varphi_tw((a\varphi_x)_xw+2a\varphi_xw_x)\\
	& -s^3\intq \varphi^2_xaw((a\varphi_x)_xw+2a\varphi_xw_x) -s\intq(aw_x)_x((a\varphi_x)_xw+2a\varphi_xw_x)\\
	\phantom{(L^+w,L^-w)} = & I_1+I_2+I_3+I_4.
	\end{align*}
	
	Integrating by parts, we obtain
	$$I_1=\frac{s}{2}\intq(\varphi_{tt}-2s\varphi_x\varphi_{xt})w^2,$$
	$$I_2=-s^2\intq \varphi_{tx}a\varphi_xw^2,$$
	$$I_3=s^3\intq a\varphi_x(a\varphi_x^2)_xw^2$$
	and
	\begin{equation*} I_4=s\intq(a\varphi_x)_{xx}aww_x+2s\intq(a\varphi_x)_xaw_x^2
	-s\intq(a\varphi_x)a_xw_x^2-s\int_0^T(a^2\varphi_xw_x^2)\big\vert_{x=0}^{x=1},
	\end{equation*}
	which imply the desired result.

\end{proof}
\begin{lem}\label{A1}
	$\dps-s\int_0^Ta^2\varphi_xw_x^2\big\vert_{x=0}^{x=1}\geq 0$
\end{lem}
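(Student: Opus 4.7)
The plan is to split the boundary term into its two endpoint contributions and treat them separately, exploiting (i) the sign of $\psi_x$ at $x=1$ and (ii) the degeneracy $a(0)=0$ at the other endpoint.

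At $x=1$, the second branch of the definition \eqref{functions1} gives $\psi_x(x)=-x/a(x)$ on $[\beta',1]$, so in particular $\psi_x(1)=-1/a(1)<0$. Consequently, from $\varphi(x,t)=\theta(t)(e^{\lambda(|\psi|_\infty+\psi)}-e^{3\lambda|\psi|_\infty})$, I get
\[
\varphi_x(1,t)=\lambda\theta(t)\psi_x(1)\,e^{\lambda(|\psi|_\infty+\psi(1))}<0,
\]
so that $-s\,a^2(1)\,\varphi_x(1,t)\,w_x^2(1,t)\geq 0$ pointwise in $t$, and integrating in time yields a nonnegative contribution from the right endpoint.

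At $x=0$, I would use that the first branch in \eqref{functions1} gives $\psi_x(x)=x/a(x)$ on $[0,\alpha')$, so
\[
a^2(x)\varphi_x(x,t)=\lambda\theta(t)\,x\,a(x)\,e^{\lambda(|\psi|_\infty+\psi(x))}.
\]
Since $a(0)=0$ and $\psi$ is bounded, the factor $xa(x)\,e^{\lambda(|\psi|_\infty+\psi(x))}$ vanishes as $x\to 0^+$, so the coefficient in front of $w_x^2$ at the left endpoint tends to $0$. Hence $-s\,a^2(0)\varphi_x(0,t)w_x^2(0,t)=0$ (interpreting it as the limit from the right), and the $x=0$ trace contributes nothing.

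The main (and essentially only) obstacle is making the vanishing of the left-endpoint trace rigorous, since $w_x(0,t)$ need not a priori make classical sense for a generic weak solution of \eqref{pbA1}. I would handle this by a standard density argument: first prove the identity for $v$ smooth enough that $w_x$ extends continuously to $[0,1]$ (e.g., $v\in C^2(\overline{\dom})$ with $v(t,0)=v(t,1)=0$), in which case the preceding pointwise analysis applies directly; then pass to the limit using the well-posedness result from Section~2 and the Carleman weights, which ensures the interior quantities on both sides converge while the left boundary trace remains controlled by $xa(x)\to 0$. The sign analysis at $x=1$ is completely elementary, and once the left-endpoint issue is dispatched, the claimed inequality follows.
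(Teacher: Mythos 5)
Your proposal is correct and follows essentially the same route as the paper, which simply writes $a^2\varphi_x=\lambda\sigma a^2\psi'$ and reads off the sign from $\psi'=-x/a$ at $x=1$ and the vanishing of $a^2\psi'=xa(x)$ at $x=0$. Your extra density argument to justify the boundary trace at the degenerate endpoint is additional care the paper's one-line proof omits, but it does not change the approach.
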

\begin{proof} Since $\psi'(x)=x/a$, if $x\in [0,\alpha')$ and $\psi'(x)=-x/a$, if $x\in (\beta'1]$, we have
	\begin{align*} 
	-s\int_0^Ta^2\varphi_xw_x^2\big\vert_{x=0}^{x=1}=-s\lambda\int_0^Ta^2\psi'\sigma w_x^2\big\vert_{x=0}^{x=1}\geq 0.
	\end{align*}
	
\end{proof}

\begin{lem}\label{A2}
	
	\begin{equation*}
	s^3\intq a\varphi_x(a\varphi_x^2)_xw^2\geq C \lambda^4s^3\intq a^2|\psi'|^4\sigma^3w^2-Cs^3\lambda^3\intwl \sigma^3w^2
	+s^3\lambda^3\inta\frac{x^2}{a}\sigma^3w^2
	\end{equation*}
\end{lem}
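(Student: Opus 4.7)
\emph{Proof plan for Lemma \ref{A2}.} The plan is a direct pointwise computation of $a\varphi_x(a\varphi_x^2)_x$ followed by a region-by-region sign analysis, using the structural hypothesis $xa'\leq Ka$ with $K\in[0,1)$. From $\varphi_x = \lambda\psi'\sigma$ and $(\sigma^2)_x = 2\lambda\psi'\sigma^2$ one obtains
\[
a\varphi_x\,(a\varphi_x^2)_x \;=\; \lambda^3\, a\psi'\,(a(\psi')^2)_x\,\sigma^3 \;+\; 2\lambda^4\, a^2(\psi')^4\,\sigma^3.
\]
Multiplying by $s^3 w^2$ and integrating over $\dom$, the $\lambda^4$ term immediately gives $2\lambda^4 s^3 \intq a^2|\psi'|^4\sigma^3 w^2$, which is the leading right-hand side (up to absorbing a small fraction into the error below). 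Everything reduces to controlling $s^3\lambda^3\intq a\psi'(a(\psi')^2)_x\sigma^3 w^2$.

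To do this, I split the spatial integral into the three regions in which $\psi$ is defined. On $(0,\alpha')$, $\psi'=x/a$, so $a\psi' = x$ and $a(\psi')^2 = x^2/a$; using $xa'\leq Ka$,
\[
(x^2/a)_x \;=\; \frac{2x}{a}-\frac{x^2 a'}{a^2}\;\geq\;\frac{(2-K)x}{a}\;\geq\;\frac{x}{a},
\]
so $a\psi'(a(\psi')^2)_x = x(x^2/a)_x \geq x^2/a$, producing exactly the $s^3\lambda^3\inta\frac{x^2}{a}\sigma^3 w^2$ contribution. On $\omega' = (\alpha',\beta')$, $\psi$ is an arbitrary $C^2$ interpolation, so $a\psi'(a(\psi')^2)_x$ is bounded in $x$ and its contribution is bounded below by $-Cs^3\lambda^3\intwl\sigma^3 w^2$, which is the observation term on the right-hand side. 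On $(\beta',1)$, $\psi' = -x/a$, so $a\psi'(a(\psi')^2)_x = -x(x^2/a)_x$ is negative, but bounded in absolute value because $a(x)\geq a(\beta')>0$ and $x\leq 1$; this region contributes at least $-Cs^3\lambda^3\intb\sigma^3 w^2$.

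Since no $\intb$-term appears on the right-hand side, the last step is to absorb this leftover into the leading $\lambda^4$ piece. On $[\beta',1]$ one has $a^2(\psi')^4 = x^4/a^2 \geq (\beta')^4/a(1)^2 > 0$ (because $a$ is nondecreasing), so for $\lambda\geq\lambda_0$ large enough a small multiple of $2\lambda^4 s^3\intb a^2|\psi'|^4\sigma^3 w^2$ dominates $Cs^3\lambda^3\intb\sigma^3 w^2$, and the stated inequality follows after relabeling the constant. The main delicacy is precisely this bookkeeping: the sign of $\psi'$ flips between the two ``active'' regions, so one must extract positivity near the degeneracy from \eqref{prop_a} while handling the loss on the non-degenerate side by a largeness argument in $\lambda$ against the leading $\lambda^4$ term.
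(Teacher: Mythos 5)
Your proof is correct and takes essentially the same route as the paper: the identical expansion $a\varphi_x(a\varphi_x^2)_x=\lambda^3 a\psi'(a(\psi')^2)_x\sigma^3+2\lambda^4 a^2(\psi')^4\sigma^3$, the same three-region splitting with \eqref{prop_a} yielding the positive $\frac{x^2}{a}$ term on $(0,\alpha')$ and the observation term on $\omega'$, and the same absorption of the negative contribution from $(\beta',1)$ into the $\lambda^4$ term by taking $\lambda$ large. The only cosmetic difference is that the paper bounds that contribution directly by $Cs^3\lambda^3\intq a^2|\psi'|^4\sigma^3w^2$ (using $x^2/a\leq Cx^4/a^2$ there), whereas you first bound it by a constant times $\sigma^3$ and then invoke the lower bound $a^2|\psi'|^4\geq c>0$ on $[\beta',1]$ — the two are equivalent.
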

\begin{proof} Firstly, we observe that
	\begin{align*}
	s^3\intq a\varphi_x(a\varphi_x^2)_xw^2 &= s^3\lambda^3\intq a\psi'(a(\psi')^2)_x\sigma^3w^2 +2s^3\lambda^4\intq a^2(\psi')^4\sigma^3w^2\\
	& = I_1+I_2.
	\end{align*}

	We can see that
	$$a\psi'(a(\psi')^2)_x=\begin{cases}
	\phantom{-}\frac{x^2}{a^2}(2a-xa'), & x\in (0,\alpha')\\
	-\frac{x^2}{a^2}(2a-xa'), & x\in (\beta',1),
	\end{cases}$$
	and \eqref{prop_a} implies $2a-xa'\geq (2-K)a>a$. Hence,
	\begin{align*}
	I_1 &=s^3\lambda^3\inta a\psi'(a(\psi')^2)_x\sigma^3w^2+s^3\lambda^3\intwl a\psi'(a(\psi')^2)_x\sigma^3w^2+s^3\lambda^3\intb a\psi'(a(\psi')^2)_x\sigma^3w^2\\
	& \geq s^3\lambda^3\inta \frac{x^2}{a}\sigma^3w^2-Cs^3\lambda^3\intwl\sigma^3w^2
	-Cs^3\lambda^3\intq a^2|\psi'|^4\sigma^3w^2
	\end{align*}
	
	We just sum $I_1$ and $I_2$,  and take $\lambda_0$ large  enough to obtain the desired inequality.
	
\end{proof}

\begin{lem}\label{A3}
	\noindent 
	
	\begin{equation*}\dps 2s\intq(a\varphi_x)_xaw^2_x\geq -C\intwl \sigma w_x^2+Cs\lambda^2\intq a^2(\psi')^2\sigma w_x^2
	+2s\lambda\inta a\sigma w_x^2
	\end{equation*}
\end{lem}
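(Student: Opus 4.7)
The proof is a direct algebraic computation that I would carry out by first expanding $(a\varphi_x)_x$ explicitly, then splitting the $x$-integral into the three regions dictated by the definition \eqref{functions1} of $\psi$, and finally absorbing the bad terms. Using $\varphi=\theta(\eta-e^{3\lambda|\psi|_\infty})$ and $\sigma=\theta\eta$, one immediately gets $\varphi_x=\lambda\psi'\sigma$ and $\sigma_x=\lambda\psi'\sigma$, so
$$ a(a\varphi_x)_x=\lambda a(a\psi')_x\sigma+\lambda^2 a^2(\psi')^2\sigma. $$
Now on $(0,\alpha')$ we have $a\psi'=x$, hence $(a\psi')_x=1$ and $a^2(\psi')^2=x^2$, while on $(\beta',1)$ we have $a\psi'=-x$, hence $(a\psi')_x=-1$ and still $a^2(\psi')^2=x^2$. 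Splitting accordingly, I would obtain
\begin{align*}
2s\intq(a\varphi_x)_x a w_x^2 &= 2s\lambda\inta a\sigma w_x^2\;-\;2s\lambda\intb a\sigma w_x^2\\
&\quad+2s\lambda^2\Bigl(\inta+\intb\Bigr)a^2(\psi')^2\sigma w_x^2\;+\;2s\int_0^T\!\!\int_{\alpha'}^{\beta'} a(a\varphi_x)_x\,w_x^2\,dx\,dt.
\end{align*}

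\textbf{The middle interval.} Since $\psi$ is $C^2$ and $a$ is bounded on $[\alpha',\beta']\subset\omega'$, the function $a(a\varphi_x)_x$ is pointwise bounded there by a constant (depending on $s,\lambda$) times $\sigma$. Therefore the last term is controlled by $C\intwl \sigma w_x^2$, and for the same reason the contribution of $2s\lambda^2 a^2(\psi')^2 \sigma$ over $(\alpha',\beta')$ can be added back in at the cost of another term of the form $C\intwl \sigma w_x^2$. Consequently
$$ 2s\lambda^2\Bigl(\inta+\intb\Bigr)a^2(\psi')^2\sigma w_x^2 \;\geq\; 2s\lambda^2\intq a^2(\psi')^2\sigma w_x^2 \;-\; C\intwl \sigma w_x^2. $$

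\textbf{Absorbing the unfavourable boundary term.} The only genuine issue is the negative contribution $-2s\lambda\intb a\sigma w_x^2$. I would absorb it into the positive $\lambda^2$-term, using that on $(\beta',1)$ one has $a^2(\psi')^2=x^2\geq(\beta')^2$ while $a\leq a(1)$. Taking $\lambda_0:=a(1)/(\beta')^2$ and $\lambda\geq\lambda_0$, one gets $\lambda^2(\beta')^2\geq\lambda a(1)\geq\lambda a$ on $(\beta',1)$, so half of $2s\lambda^2\intb a^2(\psi')^2\sigma w_x^2$ suffices to cancel the unwanted $2s\lambda\intb a\sigma w_x^2$, while the other half still gives $s\lambda^2\intb a^2(\psi')^2\sigma w_x^2$. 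Relabelling constants and combining with the previous step yields the claimed inequality. The main obstacle, and the reason the lemma is phrased with the weighted coefficient $a^2(\psi')^2$, is exactly this cancellation: one has to choose $\lambda_0$ large enough that the favourable $\lambda^2$ weight on $(\beta',1)$ dominates the adverse $\lambda$-weight; everything else is bookkeeping.
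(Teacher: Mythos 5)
Your proposal is correct and follows essentially the same route as the paper: expand $a(a\varphi_x)_x=\lambda a(a\psi')_x\sigma+\lambda^2a^2(\psi')^2\sigma$, split over $[0,\alpha']$, $\omega'$, $[\beta',1]$, bound the $\omega'$ piece by the observation term, and absorb the negative $[\beta',1]$ contribution into the $\lambda^2$-term via $a^2(\psi')^2=x^2\geq(\beta')^2\geq Ca$ there, for $\lambda$ large. Your write-up merely makes explicit the constant $\lambda_0=a(1)/(\beta')^2$ behind the paper's remark that $a^2(\psi')^2\geq Ca$ on $[\beta',1]$ allows the absorption.
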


\begin{proof} Observe that
	\begin{equation}\label{ast}
	2s\intq(a\varphi_x)_xw^2_x=2s\intq\lambda(a\psi')_xa\sigma w_x^2
	+2s\lambda^2\intq a^2(\psi')^2\sigma w_x^2
	\end{equation}
	
	Proceeding as in lemma before, we split the first integral over the intervals $[0,\alpha'], \omega'$ and $[\beta',1]$. Since $a^2(\psi')^2\geq Ca$ in  $[\beta',1]$ we can add the integral over $[\beta',1]$ to the last integral of \eqref{ast}, which gives us the result.
	
\end{proof}

\begin{lem}\label{A4}
	\begin{equation*}\dps-2s^2\intq\varphi_{tx}a\varphi_xw^2\geq -Cs^2\lambda^2\left(\inta \frac{x^2}{a}\sigma^3w^2+\intwl\sigma^3w^2\right.
	+\left.\intq a^2|\psi'|^4\sigma^3w^2\right)
	\end{equation*}
\end{lem}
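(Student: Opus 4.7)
The plan is to establish a pointwise upper bound on $|2s^2\varphi_{tx}\, a\, \varphi_x w^2|$ and then to integrate, splitting the spatial integration over $(0,1)$ into the three regions $[0,\alpha')$, $\omega'=(\alpha',\beta')$ and $[\beta',1]$ on which $\psi$ has the explicit forms dictated by \eqref{functions1}. Because the left-hand side carries a minus sign, showing the desired lower bound is equivalent to showing an upper bound on the absolute value of the integral.

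First I would compute the derivatives from the definitions. Since $\varphi(t,x)=\theta(t)(e^{\lambda(|\psi|_\infty+\psi)}-e^{3\lambda|\psi|_\infty})$ and $\sigma(t,x)=\theta(t) e^{\lambda(|\psi|_\infty+\psi)}$, one gets $\varphi_x=\lambda \psi'\sigma$, whence $\varphi_{tx}=\lambda \psi'\sigma_t$. A direct computation with $\theta(t)=[t(T-t)]^{-4}$ yields $|\theta'(t)|\leq 4T\,\theta(t)^{5/4}$; since $\theta$ attains a strictly positive minimum on $[0,T]$, one obtains $|\theta'(t)|\leq C_T\,\theta(t)^{3/2}$ and therefore $|\sigma_t|\leq C_T\sigma^{3/2}$, hence $|\varphi_{tx}|\leq C_T\lambda|\psi'|\sigma^{3/2}$. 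Combining this with $|\varphi_x|=\lambda|\psi'|\sigma$ and using the positive lower bound on $\sigma$ once more to upgrade $\sigma^{5/2}\leq C\sigma^3$, we reach the pointwise estimate
$$|2s^2\varphi_{tx}\, a\, \varphi_x w^2|\leq C s^2\lambda^2\, a|\psi'|^2\sigma^3 w^2.$$

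It then suffices to integrate this bound over $\dom$ by splitting the $x$-integration into the three subintervals. On $[0,\alpha')$ one has $\psi'(x)=x/a(x)$, so $a|\psi'|^2=x^2/a$, producing precisely $\inta \frac{x^2}{a}\sigma^3 w^2$. On the middle interval $\omega'$, $\psi$ is smooth on a compact set on which $a$ is bounded away from zero, so $a|\psi'|^2$ is bounded and the contribution is controlled by $C\intwl \sigma^3 w^2$. On $[\beta',1]$ we again have $a|\psi'|^2=x^2/a$; since both $x^2/a$ and $a^2|\psi'|^4=x^4/a^2$ are bounded above and below by positive constants on this compact interval, we have $a|\psi'|^2\leq C\,a^2|\psi'|^4$ pointwise, and this contribution is absorbed into the global term $\intq a^2|\psi'|^4\sigma^3 w^2$. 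Summing the three bounds delivers exactly the claimed inequality.

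The only slightly delicate point is the time-weight estimate $|\theta'|\leq C \theta^{3/2}$, which rests on $\theta$ being bounded below away from zero on $[0,T]$; this is the mechanism that lets one trade the $\sigma^{5/2}$ coming from the pointwise multiplication $|\varphi_{tx}|\cdot|\varphi_x|$ for the $\sigma^3$ that appears on the right-hand side. Everything else is a routine spatial splitting argument in the same spirit as the preceding Lemmas \ref{A2} and \ref{A3}.
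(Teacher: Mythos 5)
Your argument is correct and follows essentially the same route as the paper: establish the pointwise bound $|2s^2\varphi_{tx}a\varphi_x w^2|\leq Cs^2\lambda^2 a|\psi'|^2\sigma^3 w^2$ (the paper gets this directly from $|\theta\theta'|\eta^2\leq C\sigma^3$, you via $|\sigma_t|\leq C\sigma^{3/2}$ and the lower bound on $\sigma$, which is the same mechanism), then split the spatial integral over $[0,\alpha')$, $\omega'$, $[\beta',1]$ using $a|\psi'|^2=x^2/a$ near $0$, boundedness of $a|\psi'|^2$ on $\omega'$, and $a|\psi'|^2\leq Ca^2|\psi'|^4$ on $[\beta',1]$. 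No gaps.
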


\begin{proof} First of all,
	\begin{equation*}
	\left|2s^2\intq \varphi_{tx}a\varphi_xw^2\right|\leq 2s^2\lambda^2\intq a|\psi'|^2|\theta \theta'|\eta^2 w^2
	\leq Cs^2\lambda^2\intq a|\psi'|^2\sigma^3 w^2
	\end{equation*}
	
	As before, we split the last integral over the   intervals $[0,\alpha'], \omega'$ and $[\beta',1]$. The result comes from the boundedness of $a|\psi'|^2$  in $\omega'$ and from relations  $\psi'= x/a$ in $[0,\alpha']$ and $a|\psi'|^2\leq Ca^2|\psi'|^4$ in $[b',1]$.
	
\end{proof}

\begin{lem}\label{A5}
	$$-s\intq a\varphi_x a_xw_x^2\geq -K\lambda s \inta a\sigma w_x^2-c\lambda s \intwl \sigma w_x^2$$
\end{lem}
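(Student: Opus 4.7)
The plan is to decompose the integral $-s\intq a\varphi_x a_x w_x^2$ according to the piecewise definition of $\psi$ in \eqref{functions1}, and treat each region by a different mechanism: the degeneracy condition \eqref{prop_a} near $0$, the sign of $\psi'$ on $[\beta',1]$, and pure boundedness on the intermediate piece $\omega'$.

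Starting from $\varphi_x = \lambda\theta\eta\psi' = \lambda\sigma\psi'$, I would first write
\[
-s\intq a\varphi_x a_x w_x^2 \;=\; -s\lambda\intq a\psi'\,a_x\,\sigma\, w_x^2,
\]
and split the $x$-integration over $[0,\alpha']$, $\omega'=(\alpha',\beta')$, and $[\beta',1]$. On $[0,\alpha')$, $\psi'(x)=x/a(x)$, so $a\psi'=x$ and the integrand becomes $-s\lambda\,x\,a_x\,\sigma\,w_x^2$. Applying the structural hypothesis $xa'(x)\le K a(x)$ from \eqref{prop_a} (recall $a_x\ge 0$), I get
\[
-s\lambda\inta x\,a_x\,\sigma\, w_x^2 \;\ge\; -K s\lambda \inta a\,\sigma\, w_x^2,
\]
which is exactly the first term claimed in the statement.

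On $[\beta',1]$, the definition \eqref{functions1} gives $\psi'(x)=-x/a(x)$, so $a\psi'=-x$, and the corresponding piece becomes
\[
-s\lambda\intb a\psi'\,a_x\,\sigma\, w_x^2 \;=\; s\lambda\intb x\,a_x\,\sigma\, w_x^2 \;\ge\; 0,
\]
again using $a_x\ge 0$. So this contribution is nonnegative and can simply be dropped; no Hardy--Poincar\'e or ``borrowing'' from the dominant terms of Lemmas \ref{A2}--\ref{A3} is needed here.

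The intermediate region $\omega'\subset\subset(0,1)$ is where $a$ is bounded away from $0$, both $a$ and $a_x$ are bounded on $\overline{\omega'}$, and $\psi$ is a smooth $C^2$ interpolation, so $a\psi'$ and $a_x$ are bounded on $\overline{\omega'}$ by a constant depending only on the fixed data. Hence
\[
\left| -s\lambda\intwl a\psi'\,a_x\,\sigma\,w_x^2\right| \;\le\; c s\lambda \intwl \sigma\, w_x^2,
\]
which yields the second, observation-type term in the statement. Summing the three contributions produces exactly the inequality of Lemma \ref{A5}. I do not anticipate any real obstacle: the whole point of cutting $\psi$ into its two symmetric branches in \eqref{functions1} is precisely so that the ``bad'' term in $-s\intq a\varphi_x a_x w_x^2$ survives only on $[0,\alpha')$, where it is absorbed by \eqref{prop_a} with the factor $K<1$, leaving room for the later combination with the $2s\lambda \inta a\sigma w_x^2$ produced by Lemma \ref{A3}.
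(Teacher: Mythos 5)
Your proposal is correct and follows essentially the same route as the paper: the paper likewise writes $-s\intq a\varphi_x a_x w_x^2=-s\lambda\intq aa_x\psi'\sigma w_x^2$ and, splitting over $[0,\alpha']$, $\omega'$, $[\beta',1]$ as in Lemma \ref{A4}, uses $xa'\leq Ka$ near the degeneracy, the sign of $\psi'$ (together with $a'\geq 0$) on $[\beta',1]$, and boundedness on $\omega'$. Your write-up just makes explicit the regionwise details that the paper compresses into the phrase ``proceeded as in the proof of Lemma \ref{A4}.''
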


\begin{proof} In fact, from the definition of $\psi$, we obtain
	\begin{align*} 
	-s\intq a\varphi_x a_xw_x^2& =-s\lambda \intq aa_x\psi'\sigma w_x^2\\
	& \geq -Ks\lambda \inta a\sigma w_x^2-C\lambda s\intwl \sigma w_x^2,
	\end{align*}
	where we proceeded as in the proof of Lemma \ref{A4}.
\end{proof}

\begin{lem}\label{A9}
	\begin{align*}
	s\intq (a\varphi_x)_{xx}aw_xw \geq &  -Cs^2\lambda^4 \intq a^2|\psi'|^4 \sigma^3w^2-C\lambda^2 \intq a^2|\psi'|^2\sigma w_x^2 -Cs^2\lambda^3 \intwl \sigma^3w^2\\
	&-C\lambda\intwl \sigma w_x^2
	-Cs^2\lambda^3\inta \frac{x^2}{a}\sigma^3w^2-C\lambda \inta a \sigma w_x^2
	\end{align*}
\end{lem}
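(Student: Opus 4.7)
The plan is to expand $(a\varphi_x)_{xx}$ in powers of $\lambda$, split the integral into the three natural regions $[0,\alpha']$, $\omega'=(\alpha',\beta')$ and $[\beta',1]$ on which $\psi$ has distinct explicit forms, and then apply weighted Young's inequalities piecewise to distribute the factor $w_xw$ into a $w^2$-part and a $w_x^2$-part landing in one of the six admissible target terms.

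First, from $\varphi_x=\lambda\psi'\sigma$ and $\sigma_x=\lambda\psi'\sigma$ one computes
\begin{equation*}
(a\varphi_x)_{xx}=\lambda(a\psi')_{xx}\sigma+\lambda^2(a\psi')_x\psi'\sigma+\lambda^2(a(\psi')^2)_x\sigma+\lambda^3 a(\psi')^3\sigma,
\end{equation*}
so that the dominant term in $\lambda$ is $\lambda^3 a(\psi')^3\sigma$. Second, on the two outer regions the identity $a\psi'=\pm x$ forces $(a\psi')_{xx}\equiv 0$, eliminating the first summand, and the remaining pieces simplify nicely: $a(\psi')^3=\pm x^3/a^2$, $(a\psi')_x\psi'=x/a$, and $(a(\psi')^2)_x$ is controlled using \eqref{prop_a}. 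On $\omega'$ the functions $\psi$, $\psi'$, $(a\psi')_{xx}$ are all bounded in $x$, so $(a\varphi_x)_{xx}$ is bounded there by a polynomial expression in $\lambda$ times $\sigma$.

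Third, for each of the resulting integrands I would apply the weighted Young's inequality in the form $|Xww_x|\leq \tfrac12(Pw^2+Qw_x^2)$ with $PQ=X^2$, choosing $P$ and $Q$ region by region so that the $w^2$- and $w_x^2$-coefficients fall on the admissible list. On $\inta$ the degenerate weight $x^3/a$ attached to the leading term is split as $(x^2/a)\cdot x$, producing exactly the $s^2\lambda^3(x^2/a)\sigma^3 w^2$ and $\lambda a\sigma w_x^2$ contributions. On $\intwl$ the bounded coefficients give $s^2\lambda^3\sigma^3 w^2$ and $\lambda\sigma w_x^2$. On $\intb$ the bounds $a^2|\psi'|^4\geq c>0$ and $a^2|\psi'|^2\sigma\geq c>0$ allow absorption into the two global terms $s^2\lambda^4 a^2|\psi'|^4\sigma^3 w^2$ and $\lambda^2 a^2|\psi'|^2\sigma w_x^2$. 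The three subdominant summands (coefficient $\lambda$ or $\lambda^2$) are treated identically and produce strictly smaller constants, absorbed by taking $\lambda\geq\lambda_0$ large.

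The main obstacle is the bookkeeping near $x=0$: because $a(0)=0$, the three admissible $w^2$-weights $s^2\lambda^4 a^2|\psi'|^4\sigma^3$, $s^2\lambda^3(x^2/a)\sigma^3$ and $s^2\lambda^3\sigma^3$ scale very differently in $x$, and the Young distribution of the main term must be done precisely so that the leftover factor matches the admissible $w_x^2$-weight $\lambda a\sigma$ on $[0,\alpha']$ without generating uncontrollable residues. The remark that $x\mapsto x^2/a(x)$ is bounded on $(0,1]$, which follows from \eqref{prop_a}, is the key tool that makes the Young distribution close up in the degenerate regime.
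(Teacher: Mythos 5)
Your overall skeleton is the paper's: expand $(a\varphi_x)_{xx}$ by Leibniz into the $\lambda$, $\lambda^2$, $\lambda^3$ pieces, observe that $(a\psi')_{xx}\equiv 0$ outside $\omega'$ so that term lives only on $\omega'$, split the spatial integral over $[0,\alpha']$, $\omega'$, $[\beta',1]$, and distribute $|ww_x|$ by weighted Young's inequalities; your treatment of the subdominant ($\lambda$ and $\lambda^2$) pieces is exactly what the paper does. The gap is in your distribution of the dominant term $s\lambda^3\intq a^2(\psi')^3\sigma\,w_xw$. On $[0,\alpha']$ its coefficient is $s\lambda^3(x^3/a)\sigma$, and you send it to the buckets $s^2\lambda^3(x^2/a)\sigma^3w^2$ and $\lambda a\sigma w_x^2$; Young then requires the pointwise bound $s\lambda^3(x^3/a)\sigma\leq C\bigl(s^2\lambda^3(x^2/a)\sigma^3\cdot\lambda a\sigma\bigr)^{1/2}=Cs\lambda^2x\sigma^2$, i.e. $\lambda\,(x^2/a)\leq C\sigma$. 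The tool you name as decisive, boundedness of $x^2/a$, only reduces this to $\lambda\leq C\sigma$, so you are one full power of $\lambda$ short; the same mismatch occurs on $\omega'$, where the coefficient carries $\lambda^3$ but the geometric mean of your buckets $s^2\lambda^3\sigma^3$ and $\lambda\sigma$ is only $s\lambda^2\sigma^2$. This can be repaired, but only by an extra absorption argument using $\sigma=\theta\eta\geq c_T\,e^{\lambda(|\psi|_\infty+\psi)}$, hence $\sigma\gtrsim\lambda$ on $[0,\alpha']$ (and on $\omega'$ provided the interpolating part of $\psi$ keeps $|\psi|_\infty+\psi$ bounded away from $0$ there) — a step you neither state nor appear to be aware is needed.

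The paper avoids the issue altogether: the dominant term is never split by regions. It is estimated globally by writing its integrand as the product $\bigl(s\lambda^2 a(\psi')^2\sigma^{3/2}|w|\bigr)\bigl(\lambda a|\psi'|\sigma^{1/2}|w_x|\bigr)$, which lands exactly on the admissible global terms $s^2\lambda^4\intq a^2|\psi'|^4\sigma^3w^2$ and $\lambda^2\intq a^2|\psi'|^2\sigma w_x^2$, with no $\lambda$-absorption and no difficulty at the zeros of $\psi'$ inside $\omega'$ (both sides vanish there); note also that on $[0,\alpha']$ these weights are just $x^4/a^2$ and $x^2$, so nothing degenerates badly near $x=0$. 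If you reroute your leading-term estimate into these two global buckets and keep your region-by-region Young only for the lower-order pieces (where your weights do balance, using $xa'\leq Ka$ on the outer intervals and $x\leq Ca^2|\psi'|^3$ on $[\beta',1]$), your argument closes and coincides with the paper's proof.
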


\begin{proof}
	\begin{align*}
	s\intq (a\varphi_x)_{xx}aw_xw &  =s\lambda\intwl (a\psi')_{xx}a\sigma w_xw +2s\lambda^2\intq (a\psi')_{x}\psi'a\sigma w_xw \\
	&  +s\lambda^2\intq a^2\psi'\psi''\sigma w_xw  +s\lambda^3\intq a^2(\psi')^3\sigma w_xw\\
	& =I_1+I_2+I_3+I_4.
	\end{align*}
	
	The inequality will be obtained by estimating each one of these fours integrals. For $I_1$, we have
	\begin{align*}
	|I_1|& =\left|s\lambda\intwl (a\psi')_{xx}a\sigma w_xw\right|\leq Cs\lambda\intwl \sigma^2 |w_xw|\\
	& =Cs\lambda\intwl \sigma^{3/2}|w|\sigma^{1/2} |w_x|\leq Cs\lambda\intwl \sigma^3 w^2+Cs\lambda\intwl \sigma w_x^2.
	\end{align*}
	
	For $I_2$, we use the facts $\sigma\leq C\sigma^2$ and $x\leq Ca^2|\psi'|^3$ in $[\beta',1]$ to obtain
	\begin{align*}
	|I_2| & \leq Cs\lambda^2\inta x\sigma^2 |w w_x|+ Cs\lambda^2\intwl \sigma^2 |w w_x|
	+Cs\lambda^2\intb a^2|\psi'|^3\sigma^2 |ww_x|\\
	&\leq  C\inta \left(\frac{x}{\sqrt{a}}\sigma^{3/2}\lambda^{3/2}s|w|\right)(\sqrt{a}\sigma^{1/2}\lambda^{1/2}|w_x|) + C\intwl \left(\sigma^{3/2}\lambda^{3/2}s|w|\right)(\sigma^{1/2}\lambda^{1/2}|w_x|)\\
	& + C\intb \left(s\sigma^{3/2}\lambda^{2} a|\psi'|^2|w|\right)(\sigma^{1/2}a|\psi'||w_x|)\\
	& \leq C\lambda^3s^2\inta \frac{x^2}{a}\sigma^3w^2 +C\lambda \inta a\sigma w_x^2 + C\lambda^3s^2\intwl \sigma^3w^2 +C\lambda\intwl \sigma w_x^2\\
	&+ Cs^2\lambda^4 \intq a^2 |\psi'|^4\sigma^3w^2 +C\intq a^2|\psi'|^2\sigma w_x^2
	\end{align*}
	
	For $I_3$, recalling the definition of $\psi$, se observe that
	\begin{align*}
	|I_3| & \leq s\lambda^2 \inta \left|x\left(\frac{a-xa'}{a}\right)\right|\sigma |ww_x|+Cs\lambda^2\intwl \sigma^2 |ww_x|+ s\lambda^2 \intb  \left|x\left(\frac{a-xa'}{a}\right)\right|\sigma |ww_x|\\
	& \leq Cs\lambda^2\inta x\sigma^2 |ww_x|+Cs\lambda^2\intwl \sigma^2 |ww_x| + Cs\lambda^2\intb x\sigma |ww_x|.
	\end{align*} 
	So, we get the same estimate for $I_2$. Finally,
	\begin{equation*}
	|I_4| \leq \intq |s\lambda^2 a(\psi')^2\sigma^{3/2}w| |\lambda a \psi'\sigma^{1/2}w_x|\leq  Cs^2\lambda^4\intq a^2|\psi'|^4\sigma^3w^2+C\lambda^2\intq a^2|\psi'|\sigma w_x^2,
	\end{equation*}
	and the proof is complete.
\end{proof}

\begin{lem}\label{A10}
	\begin{align*}
	\frac{s}{2}\intq \varphi_{tt}w^2\geq & -Cs^{1/2}\inta \sigma a w_x^2-Cs^2\lambda^2\inta \frac{x^2}{a}\sigma^3w^2 -Cs^{1/2}\intwl \sigma w_x^2\\ 
	&-C\lambda^2s^2\intwl \sigma^3w^2-Cs^{1/2}\intq a^2|\psi'|^2\sigma w_x^2-Cs^2\lambda^2\intq a^2|\psi'|^4\sigma^3w^2
	\end{align*}
\end{lem}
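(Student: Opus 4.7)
The plan is to first obtain a pointwise upper bound on $|\varphi_{tt}|$, and then estimate $s\intq\sigma^{3/2}w^2$ region by region. Writing $\varphi(t,x)=\theta(t)(e^{\lambda(|\psi|_\infty+\psi(x))}-e^{3\lambda|\psi|_\infty})$, direct differentiation gives $\varphi_{tt}=\theta''(t)(e^{\lambda(|\psi|_\infty+\psi)}-e^{3\lambda|\psi|_\infty})$. I would bound $|\theta''(t)|\leq C\theta^{3/2}(t)$ directly from $\theta(t)=[t(T-t)]^{-4}$ (since $\theta''(t)$ behaves like $[t(T-t)]^{-6}=\theta^{3/2}$ near $t=0,T$, with bounded coefficients coming from the derivatives of $t(T-t)$), and observe that the $x$-dependent factor is uniformly bounded by $2e^{3\lambda|\psi|_\infty}$. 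Since $\eta(x)\geq 1$ implies $\theta\leq\sigma$, I conclude
\begin{equation*}
|\varphi_{tt}(t,x)|\;\leq\;C\sigma^{3/2}(t,x),
\end{equation*}
so that the lemma reduces to bounding $s\intq\sigma^{3/2}w^2$ above by the six terms on the right-hand side.

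Next I would split $(0,1)=(0,\alpha')\cup\omega'\cup(\beta',1)$ and handle each piece separately. On $(0,\alpha')$ I plan to apply Young's inequality pointwise, via the factorization $s\sigma^{3/2}w^2=A\cdot B$ with $A=s^{1/4}\sigma^{1/2}(\sqrt{a}/x)\,w$ and $B=s^{3/4}\sigma(x/\sqrt{a})\,w$, giving
\begin{equation*}
s\sigma^{3/2}w^2\;\leq\;\tfrac{1}{2}\,s^{1/2}\sigma\,\frac{a}{x^2}\,w^2\;+\;\tfrac{1}{2}\,s^{3/2}\sigma^2\,\frac{x^2}{a}\,w^2.
\end{equation*}
The first summand will be controlled by the Hardy-Poincar\'e inequality of Proposition \ref{HP}: it applies since $w(t,0)=e^{s\varphi(t,0)}v(t,0)=0$ and $a$ satisfies assumption \ref{hyp_a}. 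Because $\eta$ is continuous and bounded above and below on $[0,\alpha']$ by positive constants, the factor $\sigma$ passes freely through the inequality to yield $\int_0^{\alpha'}s^{1/2}\sigma(a/x^2)\,w^2\,dx\leq C\int_0^{\alpha'}s^{1/2}\sigma aw_x^2\,dx$, which is the first RHS term. For the second summand, $\sigma$ is bounded below by a positive constant, so the inequality $s^{3/2}\sigma^2\leq Cs^2\lambda^2\sigma^3$ holds as soon as $s\lambda^2\sigma\geq 1$, delivering the second RHS term.

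On $\omega'$ and on $(\beta',1)$ there is no degeneracy. I would simply use the bound $s\sigma^{3/2}\leq Cs^2\lambda^2\sigma^3$, valid for large $s,\lambda$ since $\sigma$ is bounded below: this produces the RHS term $Cs^2\lambda^2\intwl\sigma^3w^2$ directly on $\omega'$, while on $(\beta',1)$ the uniform lower bound $a^2|\psi'|^4\geq c>0$ allows absorption into the global term $Cs^2\lambda^2\intq a^2|\psi'|^4\sigma^3w^2$. The $w_x^2$ terms appearing in the statement on $\omega'$ and globally are not produced by this line of argument, but can be kept on the right-hand side with coefficient zero (or harmlessly larger), consistent with the uniform format of the preceding Lemmas \ref{A3}--\ref{A9}.

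The hardest step will be the Young splitting on $(0,\alpha')$: the two weights $s^{1/2}\sigma(a/x^2)$ and $s^{3/2}\sigma^2(x^2/a)$ must be balanced so that their geometric mean is exactly $s\sigma^{3/2}$, and the $a/x^2$ factor must transform cleanly into $aw_x^2$ via Hardy-Poincar\'e with the $\sigma$ weight commuting through the inequality up to constants. Once this balancing is set up correctly, the other two regions fall out immediately from the non-degeneracy and the size of the Carleman weights.
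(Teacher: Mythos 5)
Your route is essentially the paper's: the pointwise bound $|\varphi_{tt}|\leq C\sigma^{3/2}$, the Young factorization $\bigl(s^{1/4}\sigma^{1/2}\frac{\sqrt{a}}{x}w\bigr)\bigl(s^{3/4}\sigma\frac{x}{\sqrt{a}}w\bigr)$, Hardy--Poincar\'e to convert the $\frac{a}{x^2}w^2$ piece into $aw_x^2$, and the decomposition into $(0,\alpha')$, $\omega'$, $(\beta',1)$ with $a\leq Ca^2|\psi'|^2$ and $\frac{x^2}{a}\leq Ca^2|\psi'|^4$ on $(\beta',1)$. The only structural difference is the order of operations: the paper performs the Young step and Hardy--Poincar\'e globally in $x$ and splits into the three regions afterwards, while you split first; and your not producing some of the $w_x^2$ terms is harmless, since omitting negative right-hand terms only strengthens the inequality.

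The step that does not hold as written is your treatment of $(0,\alpha')$. First, Proposition \ref{HP} is stated on all of $(0,1)$, so invoking it on the subinterval $(0,\alpha')$ needs a separate justification (a subinterval version, or a cutoff, which reintroduces commutator terms supported in $\omega'$). Second, and more seriously, the claim that $\sigma$ ``passes freely'' through the inequality because $\eta$ is bounded above and below on $[0,\alpha']$ costs you the ratio of those two bounds, namely $e^{\lambda\psi(\alpha')}$: the constant in front of $-s^{1/2}\inta\sigma aw_x^2$ then grows exponentially in $\lambda$. That is not affordable here, because in Lemma \ref{A17} this negative term must be absorbed by $s\lambda\inta a\sigma w_x^2$, and the absorption only has polynomial powers of $s$ and $\lambda$ to spend; the constants in Lemma \ref{A10} must therefore be uniform in $\lambda$, which is the whole point of the explicit $\lambda$-bookkeeping. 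The repair, which is what the paper effectively does (compare the estimate of $I_2$ in the proof of Proposition \ref{propA1}), is to apply Hardy--Poincar\'e for each fixed $t$ to $x\mapsto\sigma^{1/2}w$ on $(0,1)$: since $\sigma_x=\lambda\psi'\sigma$, this yields $C s^{1/2}\intq a\sigma w_x^2$ plus the commutator contribution $Cs^{1/2}\lambda^2\intq a(\psi')^2\sigma w^2$, which equals $Cs^{1/2}\lambda^2\frac{x^2}{a}\sigma w^2$ on $(0,\alpha')$ and is dominated regionwise (using $\sigma\geq c>0$ and $s\geq 1$) by the $\sigma^3$ terms already present on your right-hand side. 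With that substitution your argument coincides with the paper's proof.
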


\begin{proof}
	Firstly, we observe that $|\varphi_{tt}| \leq C\sigma^{3/2} $. Then we apply  Hardy-Poincar\'e inequality, to take
	\begin{align*}
	\left|\frac{s}{2}\intq \varphi_{tt}w^2\right|&\leq Cs\intq \sigma^{3/2}w^2\leq  \intq\left(s^{1/4}\sigma^{1/2}\frac{\sqrt{a}}{x}w\right)\left(s^{3/4}\sigma\frac{x}{\sqrt{a}}w\right)\\
	& \leq C\intq s^{1/2}\sigma\frac{a}{x^2}w^2+C\intq s^{3/2}\sigma^2\frac{x^2}{a}w^2\\
	& \leq C\intq s^{1/2}\sigma aw^2_x+C\lambda^2s^{2}\intq\sigma^3\frac{x^2}{a}w^2\\
	\end{align*}
	
	Again, the two last intervals can be decomposed in  $[0,\alpha']$, $\omega'$ and $[\beta',1]$. At this point,  relations 
	$$\ a\leq Ca^2|\psi'|^2  \mbox{ and } \frac{x^2}{a}\leq C a^2|\psi'|^4, \mbox{ in } [\beta',1],$$
	give us the result.
	
\end{proof}

\begin{lem}\label{A17}
	\begin{multline*}
	s^3\lambda^3 \inta \frac{x^2}{a}\sigma^3w^2+s\lambda \inta \sigma a w_x^2 +
	s^3\lambda^4\intq a^2|\psi'|^4\sigma^3w^2+s\lambda^2\intq a^2|\psi'|^2\sigma w_x^2\\
	\leq C\left(\intq e^{2s\varphi}|h|^2+s^3\lambda^3\intwl\sigma^3w^2+\lambda s \intwl \sigma w_x^2\right)
	\end{multline*}
\end{lem}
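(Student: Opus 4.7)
The plan is to combine the fundamental identity
\begin{equation*}
\|L^+w\|^2+\|L^-w\|^2+2(L^+w,L^-w)=\|e^{s\varphi}h\|^2
\end{equation*}
with the decomposition of $2(L^+w,L^-w)$ provided by Lemma~\ref{lemA2} and the pointwise lower estimates of its seven constituent terms given by Lemmas~\ref{A1}--\ref{A10}. Since the first two summands on the left are nonnegative, dropping them yields the one-sided bound $2(L^+w,L^-w)\leq \|e^{s\varphi}h\|^2$, so it suffices to add up all the lower bounds of Lemmas~\ref{A1}--\ref{A10} and to rearrange the resulting inequality as (good terms)$\leq \|e^{s\varphi}h\|^2 +$ (error terms).

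After summation, the positive contributions on the left are exactly the four quantities appearing in the statement: from Lemma~\ref{A2} one gets $s^3\lambda^4\intq a^2|\psi'|^4\sigma^3 w^2$ and $s^3\lambda^3\inta\frac{x^2}{a}\sigma^3 w^2$, and from Lemma~\ref{A3} the terms $s\lambda^2\intq a^2(\psi')^2\sigma w_x^2$ and $s\lambda\inta a\sigma w_x^2$. The error terms are of three kinds: localized observation integrals over $\omega'$, which are moved to the right-hand side and form the two last pieces in the target inequality; interior integrals of strictly lower order in $s$ and $\lambda$ (for instance $Cs^2\lambda^2\intq a^2|\psi'|^4\sigma^3 w^2$ from Lemma~\ref{A4}, $C\lambda^2\intq a^2|\psi'|^2\sigma w_x^2$ from Lemma~\ref{A9}, or $Cs^{1/2}$-weighted contributions from Lemma~\ref{A10}) that must be absorbed into the four positive terms; and boundary-type terms already handled by Lemma~\ref{A1}.

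The absorption is done in two stages: first I fix $\lambda\geq\lambda_0$ large enough so that every factor $\lambda^{j-1}$ appearing in an error integrand is dominated by the corresponding $\lambda^j$ factor in the matching positive integrand, and then I fix $s\geq s_0$ large enough so that each $s^{j-1}$ prefactor is absorbed into $s^j$; this reduces every interior error to, say, $\frac12$ of the positive term with the same integrand and the absorption closes. For this to be coherent I also need to observe that on $[\beta',1]$ the function $a^2|\psi'|^4$ is bounded below by a positive constant and $a^2|\psi'|^2\geq Ca$, so the global integrals $\intq a^2|\psi'|^4\sigma^3 w^2$ and $\intq a^2|\psi'|^2\sigma w_x^2$ on the left actually control everything happening on $[\beta',1]$; only on the degenerate side $[0,\alpha']$ are the weights $x^2/a$ and $a$ genuinely necessary, and this is consistent with the fact that Lemmas~\ref{A2}, \ref{A3}, \ref{A9} produce precisely these weights on that interval via $\psi'=x/a$.

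The main obstacle is the simultaneous interplay of the four scales $s\lambda,\ s\lambda^2,\ s^3\lambda^3,\ s^3\lambda^4$: several of the intermediate lemmas (notably~\ref{A9} and~\ref{A10}) generate error contributions on more than one of these scales at once, so the order of absorption matters and one must verify that fixing $\lambda$ first and $s$ second actually suffices. A secondary subtlety is the error $Cs^{1/2}\intwl\sigma w_x^2$ produced in Lemma~\ref{A10}: it cannot be absorbed into any of the four positive interior terms, but it is harmless because the right-hand side already contains $\lambda s\intwl\sigma w_x^2$, into which it is swallowed once $s$ is large. Carrying out this bookkeeping cleanly gives the claimed inequality.
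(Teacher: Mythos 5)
Your proposal is correct and follows essentially the same route as the paper: it sums the lower bounds of Lemmas \ref{lemA2}--\ref{A10}, inserts them into the identity $\|L^+w\|^2+\|L^-w\|^2+2(L^+w,L^-w)=\|e^{s\varphi}h\|^2$, and absorbs the lower-order interior errors and the localized $\omega'$ terms by taking $\lambda$ and then $s$ large. The only point your bookkeeping glosses over is that the interior error $-Ks\lambda\inta a\sigma w_x^2$ from Lemma \ref{A5} is of the \emph{same} order as the positive term $2s\lambda\inta a\sigma w_x^2$ from Lemma \ref{A3}, so it is controlled not by largeness of $s,\lambda$ but by the structural fact $K<1<2$ from assumption \ref{hyp_a}.
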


\begin{proof}
	From Lemmas \ref{lemA2}-\ref{A10}, we have
	\begin{multline*}
	(L^+w,L^-w)\geq  C\Bigg( s^3\lambda^3 \inta \frac{x^2}{a}\sigma^3w^2+s\lambda \inta \sigma a w_x^2 +
	\lambda^4s^3\intq a^2|\psi'|^4\sigma^3 w^2\\
	+s\lambda^2\intq a^2|\psi'|^2\sigma w_x^2 -s^3\lambda^3\intwl \sigma^3w^3-\lambda s \intwl \sigma w_x^2\Bigg).
	\end{multline*}
	
	Hence, 
	
	\begin{align*}
	& C\Bigg( s^3\lambda^3 \inta \frac{x^2}{a}\sigma^3w^2+s\lambda \inta \sigma a w_x^2
	+ \lambda^4s^3\intq a^2|\psi'|^4\sigma^3 w^2\\ 
	&+s\lambda^2\intq a^2|\psi'|^2\sigma w_x^2
	-s^3\lambda^3\intwl \sigma^3w^3-\lambda s \intwl \sigma w_x^2\Bigg)\\
	& \leq  \n{L^+w}^2+\n{L^-w}^2+2(L^+w,L^-w) \leq \n{e^{s\varphi}h}^2,
	\end{align*}
	following the result.
\end{proof}

Now, we intend to prove a suitable inequality which will imply Proposition \ref{propA1}. In order to do that, we recall that $v=e^{-s\varphi}w$.

\begin{lem}\label{A18}
	\begin{multline*}
	s^3\lambda^3 \inta e^{2s\varphi} \frac{x^2}{a} \sigma^3 v^2+s\lambda \inta e^{2s\varphi} \sigma av_x^2 
\\	+s^3\lambda^4 \intq e^{2s\varphi} a^2|\psi'|^4\sigma^3v^2+s\lambda^2\intq e^{2s\varphi}a^2|\psi'|^2\sigma v_x^2\\
	\leq C\left(\intq e^{2s\varphi} |h|^2 +\lambda^3s^3 \intw e^{2s\varphi}\sigma^3v^2\right)
	\end{multline*}
\end{lem}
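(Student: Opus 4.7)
The plan is to transfer the $w$-estimate of Lemma \ref{A17} back to an estimate in $v$ via $w=e^{s\varphi}v$, which is a routine algebraic step on the left-hand side, and to dispatch the residual derivative term $s\lambda\intwl\sigma w_x^2$ on the right by an auxiliary local energy argument in the observation region.

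For the left-hand side, the identity $v_x = e^{-s\varphi}(w_x-s\lambda\psi'\sigma w)$ yields the pointwise bound $e^{2s\varphi}v_x^2 \leq 2w_x^2 + 2s^2\lambda^2(\psi')^2\sigma^2 w^2$. Substituted into the first derivative integral $s\lambda\inta e^{2s\varphi}\sigma a v_x^2$, the extra piece $2s^3\lambda^3\inta a(\psi')^2\sigma^3 w^2$ coincides with $2s^3\lambda^3\inta \frac{x^2}{a}\sigma^3 w^2$ since $\psi'=x/a$ on $[0,\alpha')$; substituted into $s\lambda^2\intq e^{2s\varphi}a^2|\psi'|^2\sigma v_x^2$, the extra piece $2s^3\lambda^4\intq a^2(\psi')^4\sigma^3 w^2$ is already present on the LHS of A17. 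Combining these with the trivial conversion $e^{2s\varphi}v^2=w^2$ of the $v^2$ terms, the entire LHS of A18 is dominated by a constant times the LHS of A17.

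Turning to the right-hand side of A17, the forcing integral carries over verbatim, and $s^3\lambda^3\intwl \sigma^3 w^2 \leq s^3\lambda^3 \intw e^{2s\varphi}\sigma^3 v^2$ is immediate. For the troublesome term $s\lambda\intwl \sigma w_x^2$, I would expand $w_x = e^{s\varphi}(v_x + s\lambda\psi'\sigma v)$ and use boundedness of $|\psi'|$ on $\omega'$ to reduce the problem to bounding $s\lambda\intwl \sigma e^{2s\varphi}v_x^2$ by the RHS of A18.

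This last bound is the main obstacle and would be obtained by a standard cut-off and integration by parts argument. Pick $\xi\in C_c^\infty(\omega)$ with $0\leq\xi\leq 1$ and $\xi\equiv 1$ on $\omega'$, multiply the equation $v_t+(av_x)_x=h$ by $s\lambda\xi^2\sigma e^{2s\varphi}v$, and integrate over $(0,T)\times\omega$. Integration by parts in $x$ is free of boundary contributions because $\xi$ vanishes near $\partial\omega$, and extracts $-s\lambda\int_0^T\!\int_\omega a\xi^2\sigma e^{2s\varphi}v_x^2$ together with cross terms of the form $av_xv$ multiplied by $x$-derivatives of $\xi^2\sigma e^{2s\varphi}$. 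Integration by parts in $t$ is likewise free since $\sigma$ vanishes at $t=0,T$, producing a weighted $v^2$ term controlled via $|\sigma_t|\leq C\sigma^{5/4}$. Young's inequality absorbs the $v_xv$ cross terms into $\tfrac{1}{2}s\lambda\int_0^T\!\int_\omega a\xi^2\sigma e^{2s\varphi}v_x^2$ plus multiples of $\intq e^{2s\varphi}|h|^2$ and $s^3\lambda^3\intw e^{2s\varphi}\sigma^3 v^2$. Since $a$ is bounded below by a positive constant on $\omega$, this yields the required inequality, and combining with Lemma \ref{A17} finishes the proof of Lemma \ref{A18}.
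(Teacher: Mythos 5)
Your proposal is correct and takes essentially the same route as the paper: rewrite the left-hand side of Lemma \ref{A17} in terms of $v$ through $w=e^{s\varphi}v$ (the extra terms generated by $\varphi_x=\lambda\psi'\sigma$ are exactly terms already present on the left of Lemma \ref{A17}), convert the leftover observation term $s\lambda\int_0^T\!\int_{\omega'}\sigma w_x^2$ back to $v$ using the boundedness of $\psi'$ and the positive lower bound of $a$ on $\omega'$, and control $s\lambda\int_0^T\!\int_{\omega'}e^{2s\varphi}\sigma a v_x^2$ by the cut-off/integration-by-parts (Caccioppoli-type) estimate obtained from multiplying the equation by $s\lambda\,\chi\,\sigma e^{2s\varphi}v$, with Young's inequality absorbing the cross terms --- the paper uses a single cutoff $\chi$ where you use $\xi^2$, which is immaterial. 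One small correction: the time integration by parts is free of boundary contributions not because $\sigma$ vanishes at $t=0,T$ (in fact $\sigma=\theta\eta$ blows up there), but because the full weight $e^{2s\varphi}\sigma$ vanishes there, since $\varphi\to-\infty$ dominates.
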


\begin{proof}
	Since $v=e^{-s\varphi}w$, we have
	\begin{align*}
	&  e^{s\varphi}v_x=-s\lambda \psi' \sigma w+w_x 
	\end{align*}
	which implies
	\begin{align*}
	e^{2s\varphi}s\lambda^2|\psi'|^2a^2\sigma v_x^2
	& =(s\lambda^2|\psi'|^2a^2\sigma)e^{2s\varphi}v_x^2 \leq C(s\lambda^2|\psi'|^2a^2\sigma)(s^2\lambda^2|\psi'|^2\sigma^2w^2+w_x^2)\\
	& \leq C(s^3\lambda^4|\psi'|^4\sigma^3a^2w^2+s\lambda^2|\psi'|^2a^2\sigma w_x^2)
	\end{align*}
	Besides that,
	\begin{align*}
	& w_x=s\varphi_xe^{s\varphi}v+e^{s\varphi}v_x\Rightarrow w_x^2\leq C(s^2\lambda^2|\psi'|^2\sigma^2e^{2s\varphi}v^2+e^{2s\varphi}v_x^2)\nonumber\\
	\Rightarrow & w_x^2\leq C(s^2\lambda^2\sigma^2e^{2s\varphi}v^2+e^{2s\varphi}av_x^2), \mbox { in } \omega'
	\end{align*}
	
	Hence, from Lemma \ref{A17}, we get
	\begin{align}\label{A12}
	& s^3\lambda^3 \inta e^{2s\varphi} \frac{x^2}{a} \sigma^3  v^2+s\lambda \inta e^{2s\varphi} \sigma av_x^2 +	s^3\lambda^4 \intq e^{2s\varphi} a^2|\psi'|^4\sigma^3v^2\nonumber\\
	&\phantom{s^3\lambda^3 \inta e^{2s\varphi} \frac{x^2}{a} \sigma^3  v^2+s\lambda \inta e^{2s\varphi} \sigma av_x^2 +	s^3\lambda^4 \intq} +s\lambda^2\intq e^{2s\varphi}a^2|\psi'|^2\sigma v_x^2\nonumber\\
	&\leq C\bigg(  s^3\lambda^3 \inta \frac{x^2}{a} \sigma^3  w^2+s\lambda \inta  \sigma aw_x^2  +	s^3\lambda^4 \intq a^2|\psi'|^4\sigma^3w^2+s\lambda^2\intq a^2|\psi'|^2\sigma w_x^2\nonumber\\
	&\leq C\left(\intq e^{2s\varphi}|h|^2+s^3\lambda^3\intwl\sigma^3w^2+\lambda s \intwl \sigma w_x^2\right)\nonumber\\
	& \leq C\left(\intq e^{2s\varphi}|h|^2+s^3\lambda^3\intwl e^{2s\varphi}\sigma^3v^2+\lambda s \intwl e^{2s\varphi}  \sigma av_x^2\right)
	\end{align}
	
	To complete the proof we will estimate the last integral of \eqref{A12}. Firstly, let us take $\chi\in C_0^\infty(\omega)$  such that $0\leq \chi \leq 1$ and $\chi\equiv 1$ in $\omega'$. Multiplying equation in \eqref{pbA1} by $\lambda s e^{2s\varphi}\sigma v \chi$ and integrating over $\dom$, we obtain
	\begin{equation}\label{A13}
	\lambda s\intq e^{2s\varphi}\sigma v v_t\chi+\lambda s\intq e^{2s\varphi} \sigma (av_x)_xv\chi 
	=\lambda s \intq e^{2s\varphi}\sigma hv\chi.
	\end{equation}
	
	We can see that
	\begin{multline}\label{A14}
	\left|\intq e^{2s\varphi}\sigma v v_t\chi\right|=\left|\frac{1}{2}\intq e^{2s\varphi}\sigma \frac{d}{dt}v^2\chi\right|=\left|-\frac{1}{2}\intq (e^{2s\varphi}\sigma \chi)_{_t}v^2\right| \\
	=\left|-\frac{1}{2}\intq \chi e^{2s\varphi}(2s\varphi_t\sigma+\sigma_t)v^2\right|\leq Cs\intw e^{2s\varphi} \sigma^3v^2.
	\end{multline}
	
	And, analogously,
	\begin{align*}
	\intq e^{2s\varphi}\sigma (av_x)_x v \chi=-\intq e^{2s\varphi}\sigma a v_x^2 \chi -\intq (e^{2s\varphi}\sigma \chi)_xav_xv.
	\end{align*}
	Since $\varphi_x\leq C\sigma$ and $\sigma_x\leq C\sigma$ in $\domw$, we get
	\begin{equation}\label{A15}
	\left|\intq (e^{2s\varphi}\sigma \chi)_xav_xv\right|\leq C\intw  e^{2s\varphi} \sigma^2 |av_x||v|.
	\end{equation}
	
	Now, from \eqref{A13}-\eqref{A15} we obtain
	\begin{align*}
	& \lambda s\intw e^{2s\varphi} \sigma a v_x^2=\lambda s\intq e^{2s\varphi} \sigma a v_x^2\chi\\
	& \leq \left|-\lambda s \intq \wei\sigma (a v_x)_xv\chi -\lambda s \intq (\wei \sigma \chi)_xav_xv\right|\\
	& \leq \lambda s \intq \wei \sigma |vv_t|\chi +\lambda s \intq \wei \sigma |hv|\chi +\lambda s \intq |(\wei \sigma\chi)_x||av_xv| \\
	& \leq C\lambda s^2\intw e^{2s\varphi} \sigma^3v^2 + \lambda s \intw (e^{s\varphi} h)(e^{s\varphi}\sigma v)+ C\lambda s\intw  e^{2s\varphi} \sigma^2 |av_x||v|\\
	& \leq C\lambda^3 s^3\intw e^{2s\varphi} \sigma^3v^2 + \frac{1}{2}\lambda s \intw e^{2s\varphi} h^2+\frac{1}{2}\lambda s \intw e^{2s\varphi} \sigma^2 v^2\\
	& \phantom{\lambda s \intq \wei \sigma |vv_t|\chi }+ C\lambda s\intw  (e^{s\varphi} \sigma^{1/2}a^{1/2}|v_x| )(e^{s\varphi} a^{1/2}\sigma^{3/2}|v|)\\
	& \leq C\lambda^3 s^3\intw e^{2s\varphi} \sigma^3v^2 + C \intw e^{2s\varphi} h^2\\
	& \phantom{\lambda s \intq \wei \sigma |vv_t|\chi }+ \ep C\lambda s\intw  e^{2s\varphi} \sigma a v_x^2+C_\ep \intw e^{2s\varphi} a\sigma^3 v^2\\
	& \leq C\lambda^3 s^3\intw e^{2s\varphi} \sigma^3v^2 + C \intw e^{2s\varphi} h^2+ \ep C\lambda s\intw  e^{2s\varphi} \sigma a v_x^2.
	\end{align*}
	Hence, taking $\ep=1/2C$, we get
	\begin{equation*}
	\lambda s\intw e^{2s\varphi} \sigma a v_x^2\leq  C\left(  \intw e^{2s\varphi} h^2+\lambda^3 s^3\intw e^{2s\varphi} \sigma^3v^2 \right).
	\end{equation*}
	It last inequality combined with \eqref{A12} completes the proof.
	
\end{proof}

Now we are ready to prove Proposition \ref{propA1}.

\begin{proof}[Proof of Proposition \ref{propA1}]
	\noindent 
	
	Firstly, we observe that
	\begin{align*}
	& \lambda^2s^2\intq \wei \sigma^2v^2=\lambda^2s^2\intq \sigma^2w^2= \intq \left(\lambda^{3/2}s^{3/2}\sigma^{3/2}\frac{x}{\sqrt{a}}w\right)\left(\lambda^{1/2}s^{1/2}\sigma^{1/2}\frac{\sqrt{a}}{x}w\right)\\
	& \leq \frac{s^3\lambda^3}{2}\intq \sigma^3\frac{x^2}{a}w^2+\frac{s\lambda}{2}\intq \sigma\frac{a}{x^2}w^2=I_1+I_2.
	\end{align*}
	
	Now, let us estimate $I_1$ and $I_2$ taking into account the terms of the inequality given by Lemma \ref{A17}.
	
	Splitting $I_1$ over the   intervals $[0,\alpha'], \omega'$ and $[\beta',1]$, and taking into account that $x^2/a$ is bounded in $\omega'$ and $x^2/a\leq a^2|\psi'|^2$ in $[b',1]$, we use Lemma \ref{A17} to obtain that
	\begin{align*}
	I_1 & \leq s^3\lambda^3\inta \sigma^3\frac{x^2}{a}w^2+Cs^3\lambda^3\intwl \sigma^3 w^2+Cs^3\lambda^4\intq \sigma^3 a^2|\phi'|^4w^2\\
	& \leq C\left(\intq \wei h^2+s^3\lambda^3\intwl \sigma^3 w^2+\lambda s\intwl \sigma w_x^2\right)
	\end{align*}
	
	In addition, we apply Hardy-Poincar\'e inequality to estimate $I_2$, as following
	\begin{align*}
	& I_2= \frac{s\lambda}{2}\intq \frac{a}{x^2}(\sigma^{1/2}w)^2\leq Cs\lambda\intq a(\sigma^{1/2}w)_x^2\\
	& =Cs\lambda \intq a\left(\frac{1}{2}\sigma^{-1/2}\sigma_xw+ \sigma^{1/2}w_x\right)^2\\
	& \leq Cs\lambda \intq a\sigma^{-1}\sigma_x^2w^2+Cs\lambda \intq a\sigma w_x^2\\
	& \leq Cs\lambda^3 \intq a|\psi'|^2\sigma w^2+Cs\lambda \intq a\sigma w_x^2\\
	& \leq Cs^3\lambda^3 \inta \frac{x^2}{a}\sigma^3 w^2+Cs^3\lambda^3 \intwl \sigma^3 w^2+Cs^3\lambda^4 \intb a^2|\psi'|^4\sigma^3 w^2\\
	&\phantom{\leq}+Cs\lambda \inta a\sigma w_x^2+Cs\lambda \intwl \sigma w_x^2+Cs\lambda^2 \intb a^2|\psi'|^2\sigma w_x^2\\
	& \leq C\left(\intq \wei h^2+s^3\lambda^3\intwl \sigma^3 w^2+\lambda s\intwl \sigma w_x^2\right)
	\end{align*}
	
	Hence,
	\begin{equation*}
	\lambda^2s^2\intq \wei \sigma^2v^2\leq I_1+I_2 \leq C\left(\intq \wei h^2+s^3\lambda^3\intwl \sigma^3 w^2+\lambda s\intwl \sigma w_x^2\right).
	\end{equation*}
	Proceeding exactly as in the proof of Lemma \ref{A18}, we achieve
	\begin{equation*}\label{A20}
	\lambda^2s^2\intq \wei \sigma^2v^2\leq  C\left(  \intw e^{2s\varphi} h^2+\lambda^3 s^3\intw e^{2s\varphi} \sigma^3v^2 \right),
	\end{equation*}
	and the result given by Lemma \ref{A18} gives us
	\begin{align*}
	s\lambda \intq \wei a\sigma v_x^2 &\leq s\lambda \inta \wei a\sigma v_x^2+s\lambda \intwl \wei a\sigma v_x^2	+s\lambda \intb \wei a^2|\psi'|^2\sigma v_x^2\\
	&\leq  C\left(  \intw e^{2s\varphi} h^2+\lambda^3 s^3\intw e^{2s\varphi} \sigma^3v^2 \right).
	\end{align*}
	
	Therefore, this last two estimates conclude the proof of Proposition \ref{propA1}.
	
\end{proof}

\begin{proof}[Proof of Propostion \ref{cor_hat}]
	If $v$ is a solution of \eqref{prob2}, then $v$ is also a solution of \eqref{pbA1} with $h=F+cv$. In this case, applying Propostion \ref{propA1}, there exist $C>0$, $\lambda_0>0$ and $s_0>0$ such that $v$ satisfies, for all $s\geq s_0$ and $\lambda\geq \lambda_0$, 
	\begin{equation} \label{A19}
	\intq e^{2s\varphi}\left((s\lambda)\sigma av_x^2+(s\lambda)^2\sigma^2v^2 \right) \\
	\leq C\left(\intq e^{2s\varphi}|h|^2\   +(\lambda s)^3\intw e^{2s\varphi}\sigma^3v^2\   \right).
	\end{equation} 
	
	Recalling that $c\in L^{\infty}(\dom)$, we can see that
	\begin{align*}
	\intq e^{2s\varphi}|h|^2& =\intq e^{2s\varphi}|F+cv|^2\\
	& \leq C\intq e^{2s\varphi}|F|^2+C\n{c}_{_\infty}^2\intq e^{2s\varphi}|v|^2\\
	& \leq C\intq e^{2s\varphi}|F|^2+C\intq e^{2s\varphi}\sigma^2|v|^2.
	\end{align*}
	
	Finally, taking $\lambda_0$ and $s_0$ large enough, the last integral can be absorbed by the left-hand side of \eqref{A19} which complete the proof.

\end{proof}

\bibliographystyle{acm}
\bibliography{references}

\end{document}